\newtheorem{theorem}{Theorem}[section]
\newtheorem*{acknowledgement*}{Acknowledgement}
\newtheorem{assumption}[theorem]{Assumption}
\newtheorem{corollary}[theorem]{Corollary}
\newtheorem{definition}[theorem]{Definition}
\newtheorem{lemma}[theorem]{Lemma}
\newtheorem{proposition}[theorem]{Proposition}
\newtheorem{remark}[theorem]{Remark}
\def\F{\mathcal F}
\def\R{\mathbb{R}}
\def\N{\mathbb{N}}
\def\Pr{\mathbb{P}}
\def\Af{\mathbf{A}}
\renewcommand{\phi}{\varphi} 
\newcommand{\tild}{\widetilde} 
\def\ub{\underline{b}}
\DeclareMathOperator{\E}{\mathbb E}
\newcommand{\kom}[1]{}
\renewcommand{\kom}[1]{{\bf [#1]}}
\newcounter{komcounter}
\numberwithin{komcounter}{section}
\renewcommand{\ul}{\underline}
\newcommand{\ol}{\overline}
\renewcommand{\epsilon}{\varepsilon} 
\title{Time-inconsistent singular control problems: \\ Reflection and Absolutely continuous controls with exploding rates}
\author{
Andi Bodnariu \\Department of Mathematics, Stockholm University\\
\\Kristoffer Lindensjö\\Department of Mathematics, Stockholm University\\
\\Neofytos Rodosthenous\\Department of Mathematics, University College London
}
\begin{document}

\maketitle

\begin{abstract}
We study a time-inconsistent singular stochastic control problem for a general one-dimensional diffusion, where time-inconsistency arises from a non-exponential discount function. To address this, we adopt a game-theoretic framework and study the optimality of a novel class of controls that encompasses both traditional singular controls -- responsible for generating multiple jumps and reflective boundaries ({\it strong thresholds}) -- and new {\it mild threshold control strategies}, which allow for the explosion of the control rate in absolutely continuous controls, thereby creating an inaccessible boundary ({\it mild threshold}) for the controlled process. 
We establish a general verification theorem, formulated in terms of a system of variational inequalities, that provides both necessary and sufficient conditions for equilibrium within the proposed class of control strategies and their combinations. To demonstrate the applicability of our theoretical results, we examine case studies in inventory management. We show that for certain parameter values, the problem admits a strong threshold control equilibrium in the form of Skorokhod reflection. 
In contrast, for other parameter values, we prove that no such equilibrium exists, necessitating the use of our extended control class.  
In the latter case, we explicitly construct an equilibrium using a mild threshold control strategy with a discontinuous, increasing, and exploding rate that induces an inaccessible boundary for the optimally controlled process, marking the first example of a singular control problem with such a  solution structure.
\end{abstract}

\section{Introduction} \label{sec:intro}

In this paper, we study a time-inconsistent singular stochastic control problem, where the time-inconsistency arises from the use of a non-exponential discount function.
The uncontrolled state process is a general one-dimensional diffusion, and the controller seeks to minimise an expected cost functional consisting of a non-decreasing running cost and a terminal cost, accumulated until possible absorption at a lower boundary of the state space. Control is exerted by pushing the process downward, incurring a proportional (unit) cost and thereby introducing a natural trade-off between state persistence and intervention.
The precise formulation of the problem is given in Section \ref{sec:model}.
As a motivating application, and the subject of our case studies, we consider a time-inconsistent version of the classical inventory control problem, which fits naturally into this framework (see Sections \ref{sec:applications} and \ref{sec:mixopt} for a detailed analysis).

To address the time-inconsistency inherent in our problem, we adopt a game-theoretic approach.
Informally, a control problem is said to be time-inconsistent if a strategy that appears optimal from the perspective of a given initial time and state may no longer remain optimal when re-evaluated at later times and states -- that is, Bellman’s principle of optimality fails to hold (see Remark \ref{rem:egBM}).
This perspective dates back to the seminal work of Strotz \cite{strotz} in the 1950s, and has since inspired a growing mathematical literature, with early foundational contributions including \cite{tomas-continpubl,bjork2014theory}.
The core idea is to view the control problem as an intrapersonal dynamic game, where the players are a  continuum of the decision-maker's selves -- each corresponding to a future incarnation of the same individual -- who may have differing preferences due to time-inconsistency.
In our context, this translates to the possibility of deviating unilaterally from a proposed strategy at any given 
state $x$, albeit only locally in time, as future selves retain control thereafter.
This framework has been explored extensively in recent works (see e.g.~\cite{tomas-continpubl,christensen2018finding, christensen2020time,lindensjo2019regular}), which offer further discussion of this interpretation and other time-inconsistent examples.

Most of the mathematical literature on time-inconsistent problems has focused either on classical control problems -- where players choose regular controls, absolutely continuous with respect to the Lebesgue measure, with state-dependent control rates 
-- or on time-inconsistent stopping problems (see e.g.~\cite{bjork2021time} for a comprehensive overview of this literature).
Several notions of equilibrium have been proposed in the time-inconsistent setting (for comparisons of different equilibrium concepts, see e.g.~\cite{bayraktar2023equilibria} for stopping and \cite{HuangZhou2020} for control problems); the notion of equilibrium adopted in the present paper corresponds to a so-called weak equilibrium.

Time-inconsistent singular stochastic control problems have also received some attention, though to a more limited extent. 
In \cite{christensen2022moment}, a singular dividend problem with a moment constraint on the number of dividends is studied, leading to an equilibrium of impulse control type. 
Similarly, \cite{doi:10.1137/16M1088983} considers a dividend problem under transaction costs and non-exponential discounting, where again impulse-type equilibria are analysed. 
A related setup with a spectrally positive L\'evy process is investigated in \cite{Levytimeinc}, where the time-inconsistency again stems from non-exponential discounting and the resulting equilibrium is again of impulse control type.
Further variants of time-inconsistent dividend problems arise in \cite{CHEN2014150} and \cite{zhu2020singular}, where the discount rate changes discretely over time.
To our knowledge, the work most closesly related to the present paper is \cite{liang2024equilibria}, which studies a time-inconsistent singular control problem under non-exponential discounting. 
A key difference, however, is that their analysis is restricted to Skorokhod reflection-type equilibria, where the state space is partitioned into a waiting region (no control) and a singular control region (where the state is sent to or reflected at the boundary of the closure of the waiting region). 
Importantly, \cite{liang2024equilibria} does not consider equilibria involving absolutely continuous controls, whether with bounded or unbounded rates. 
This marks a central novelty of the present paper: we allow for a broader class of equilibria, including absolutely continuous controls with potentially discontinuous, non-exploding, or exploding rates that induce inaccessible boundaries. 
These may also be combined with singular controls, which generate multiple jumps and reflective boundaries. 
We further note related studies of time-inconsistent dividend problems with bounded absolutely continuous controls, such as \cite{strini2023time} and \cite{zhao2014dividend}.

The main contributions of the present paper are as follows: 
\begin{enumerate}[(i)]

\vspace{-1.5mm}
\item We develop a general framework for admissible control strategies in singular stochastic control problems, allowing for combinations of jumps, Skorokhod reflection, and absolutely continuous controls with potentially discontinuous and exploding rates. 
A key novelty is that we permit the control rate $u(\cdot)$ to become unbounded near a point $\beta$ in the interior of the state space (see Section~\ref{sec:mild} for details). 
In our framework, this explosive behaviour creates an inaccessible, entrance-not-exit boundary at $\beta$, effectively turning it into a threshold that cannot be reached by the controlled process; 
we refer to such strategies as {\it mild threshold control strategies} (see Definition~\ref{def:mixed0}). 
To accommodate starting the diffusion at such a boundary, we introduce an expected cost criterion defined via a suitable limiting procedure and establish sufficient conditions for its well-posedness (see Proposition~\ref{prop_limit_ex} and Remark~\ref{ghghffsa}).
In contrast, we define {\it strong threshold control strategies} as those involving reflection of the state process at a fixed boundary $b$, in the classical Skorokhod sense (see Definition~\ref{def:strong}). 
Our framework also accommodates strategies that combine both singular and absolutely continuous controls, potentially involving both strong and mild thresholds; we refer to these as {\it generalised threshold control strategies} (see Definition~\ref{def:mixedm}). 
Such strategies can induce multiple reflection boundaries and jumps in the controlled process. 
To the best of our knowledge, this is the first framework in the stochastic control literature to study the optimality of unbounded absolutely continuous controls that generate inaccessible boundaries.

\vspace{-2mm}
\item Building on this framework, we introduce a weak equilibrium concept for time-inconsistent singular stochastic control problems under non-exponential discounting. This significantly extends the range of admissible strategies beyond those currently treated in the literature, which have been limited to bounded absolutely continuous control, impulse control, or Skorokhod reflection (cf.~the literature review above).

\vspace{-2mm}
\item We formulate and prove a general verification theorem that provides both necessary and sufficient conditions -- expressed as a system of variational inequalities -- for equilibrium strategies of both strong and mild threshold control type. 
In the case of strong thresholds, our framework additionally accommodates combinations of absolutely continuous controls with Skorokhod reflection.

\vspace{-2mm}
\item Our theory enables the identification of equilibrium strategies in settings where existing methods fail. 
In particular, we study a time-inconsistent inventory control problem under non-exponential discounting. 
In one parameter regime, the problem admits a strong threshold control equilibrium strategy via Skorokhod reflection. 
In another, we can prove that no such equilibrium exists, and an equilibrium arises only through an unbounded, discontinuous absolutely continuous control whose rate explodes in the interior of the state-space and creates an inaccessible boundary. 
This example demonstrates that our extended class of controls is not merely generalising known cases -- it is essential for solving certain time-inconsistent problems.

\end{enumerate}
\vspace{-1mm}
We believe that our framework and techniques have the potential to inform the analysis of a broader class of time-inconsistent control problems and also other games involving singular controls.

In Section 2, we present the mathematical model and define the optimisation criterion considered in this paper. 
We also clarify the sense in which the problem exhibits time-inconsistency. 
In Section 3, we develop the framework for admissible controls and introduce the equilibrium definition. 
Section 4 presents a verification theorem, formulated in terms of variational inequalities, for equilibrium strategies of both strong and mild threshold control types. 
Finally, in Section 5, we apply the developed theory to an inventory control problem, presenting two case studies that yield strong and mild threshold control equilibrium strategies, respectively.

\section{Problem formulation} 
\label{sec:model}

\subsection{The mathematical model}\label{sec:sub-model}

Denote a filtered probability space by  $(\Omega, \F, \left({\cal F}_t\right)_{t\geq 0}, \mathbb{P})$ satisfying the usual conditions, equipped with a standard one-dimensional $\left({\cal F}_t\right)$-Brownian motion $W=\left(W_t\right)_{t\geq0}$.
We consider a diffusion process $X^0=\left(X_t^0\right)_{t\geq0}$ satisfying, in the absence of intervention, the stochastic differential equation (SDE) 
\begin{align} \label{X0}
d X_t^0 = \mu(X_t^0) \, dt + \sigma(X_t^0) \, dW_t, \quad  X^0_{0}= x \in (l,r), \quad 0 \leq t \leq \tau^0_l,
\end{align}
where $-\infty \leq l <r\leq \infty$, and the potential absorption time $\tau^0_l$ is defined by 
\begin{align} \label{tau0}
\tau^0_y:= \inf\{t\geq 0: X^0_t = y \}, \quad y \in [l,r].
\end{align}

\begin{assumption}[Standing assumption] \label{Ass:bounds} 
The functions $\mu:(l,r) \rightarrow \mathbb{R}$ and $\sigma:(l,r) \rightarrow (0,\infty)$ are Lipschitz continuous, $\sigma(x)>0$ for all $x\in(l,r)$, and the boundary $r$ is inaccessible for $X^0$ satisfying the SDE \eqref{X0} (i.e.~$r$ is either natural or entrance-not-exit; see e.g.~\cite[Section II.1.6]{borodin2012handbook} for more details).  
\end{assumption}

It follows from Assumption \ref{Ass:bounds} that the SDE \eqref{X0} admits a unique strong solution (see e.g.~\cite[Chapter 5.5]{Karatzas2}).
Note that the boundary $r$ cannot be reached in finite time, $\mathbb{P}$--a.s., and that if $l$ is reached in finite time then $X^0$ is killed at $\tau^0_l$, while if $l$ is inaccessible for $X^0$ then $\tau^0_l=+\infty$, $\mathbb{P}$-a.s.  
It is worth noting that Feller’s test for explosion (see e.g.~\cite[Theorem 5.5.29]{borodin2012handbook})
provides necessary and sufficient conditions for boundaries of diffusions to be inaccessible. Furthermore, it follows from Assumption \ref{Ass:bounds} that for any $x\in(l,r)$, there exists an $\varepsilon > 0$ such that 
$$
\int_{x - \varepsilon}^{x + \varepsilon} \frac{1+ |\mu(z)|}{\sigma(z)^2} dz < + \infty. 
$$
Hence, the diffusion $X^0$ is regular in $(l,r)$, 
i.e. for every starting point $X^0_0 = x \in (l,r)$ and $y \in (l,r)$, we have 
$\mathbb{P}(\tau_y^0 < \infty) > 0$, where $\tau_y^0$ is defined by \eqref{tau0}. This means that $(l,r)$ cannot be decomposed into smaller sets from which $X^0$ cannot exit (see e.g.~\cite[Section II]{borodin2012handbook} for more details).

The optimisation problem of the present paper is based on a controlled counterpart $X^D=\left(X_t^D\right)_{t\geq0}$ of $X^0$, which satisfies the SDE 
\begin{align} \label{X}
d X_t^D = \mu(X_t^D) \, dt + \sigma(X_t^D) \, dW_t - d D_t, \quad  X^D_{0-}= x \in (l,r), \quad 0 \leq t \leq \tau^D_l,
\end{align}
where the control process $D= (D_t)_{t\geq0}$ belongs to the set
\begin{align*}
\mathcal{E} := \{ &(D_t(\omega))_{t\geq0} \text{ is non-decreasing, right-continuous, $\left({\cal F}_t\right)$-adapted,}\\
&\; \text{ with $D_{0-}=0$, such that  $D_{t}-D_{t-} \leq X_{t-}^D-l$, $\forall$ $t \geq 0$,  $\mathbb{P}$-a.s.\}}
\end{align*}
and the potential absorption time $\tau^D_l$ is defined by 
\begin{align} \label{tau}
\tau^D_l := \inf\{t\geq 0: X^D_t =l\}.
\end{align}
For each $t\geq 0$, we interpret such $D_t$ as the accumulated amount of control exerted by a decision maker from time $0$ up to time $t$. 
Note that $D\in\mathcal{E}$ implies that the controlled process $X^D$ cannot jump to a point below $l$. 

Finally, we conclude again from Assumption \ref{Ass:bounds}, that for any $x\in(l,r)$ and $D \in \mathcal{E}$, there exists a unique strong solution $X^D$ to the SDE \eqref{X} (see e.g.~\cite[Theorem V.7]{protter2005stochastic}).

\subsection{The time-inconsistent cost criterion}
\label{sec:inconsistent}

Consider a process $X^{D}$ satisfying \eqref{X}, and a decision maker who selects the process $D$ and is faced with a stochastic instantaneous cost $f(X^D_t)$ for all $t \in [0, \tau^D_l]$, and a constant cost $f(l)$ for all $t \geq \tau^D_l$ in case $\tau^D_l$ is finite and the process $X^D$ is absorbed at the point $l$. 
In particular, for a given discount rate $q>0$, the total discounted cost (if controlling $X^D$ were costless) for the decision maker is 
$$
\int_0^{\tau^D_l} e^{-q t} \, f(X^D_t) \, dt + \int_{\tau^D_l}^\infty e^{-qt} f(l) dt 
= \int_0^{\tau^D_l} e^{-q t} \, f(X^D_t) \, dt + e^{-q \tau^D_l} q^{-1} f(l).
$$
The instantaneous cost function $f$ is specified in the following standing assumption.

\begin{assumption}[Standing assumption] \label{Ass:f} 
The function $f:[l,r)\rightarrow [0,\infty)$ is continuous and non-decreasing.
\end{assumption}

We assume that the aim of the decision maker is to minimise the costs generated by the  underlying process $X^{D}$. 
Given that the penalisation is higher when the value of $X^D$ is higher (cf.\ Assumption \ref{Ass:f}), the decision maker is motivated to keep $X^D$ as low as possible, by exerting control $D$ to reduce the level of $X^D$ when it increases `too much'.
However, we also assume that exerting control is  costly with a marginal cost of one per unit of control exerted. This creates a trade-off for the decision maker, whose objective is therefore to find an optimal control process $D$ that minimises the total cost.  
Mathematically, we formulate this trade-off as follows.

For a given discount rate $q>0$, control process $D \in \mathcal{E}$, and starting value 
$X_{0-}=x\in(l,r)$ for the process $X^{D}$ satisfying the SDE \eqref{X}, we define the {\it time-consistent} (expected discounted) cost criterion by
\begin{align} \label{wr} 
w(x;q,D):= \mathbb{E}_x\bigg[ \int_0^{\tau^D_l} e^{-q t} \, f(X^D_t) \, dt + \int_0^{\tau^D_l} e^{-q t} \, dD_t + e^{-q \tau^D_l} q^{-1}f(l) \bigg] ,
\end{align}
where $\E_x[\,\cdot\,] := \E^\Pr[\,\cdot \, | \, X^D_{0-}=x]$, 
and the second integral is interpreted in the Riemann–Stieltjes sense, which includes potential atoms of the random measure $t\mapsto dD_t(\omega)$. 

\begin{remark}[Standard setup: Time-consistent singular control problem] \label{rem:stdprob}
The minimisation of the cost criterion \eqref{wr} corresponds to a standard singular stochastic control problem with value function (see~e.g.~\cite[Chapter VIII]{Fleming-Soner})
\begin{align} \label{Ax}
V_q(x) := \inf_{D \in \mathcal{E}} w(x;q,D), \quad x \in (l,r). 
\end{align}
Using standard verification techniques, it can be shown that the associated optimal control process is of Skorokhod reflection type under suitable assumptions. 
\end{remark}

In the present paper we consider a generalisation of the standard problem in Remark \ref{rem:stdprob}, 
by not considering an exponential discounting function (cf.\ $e^{-qt}$ as in \eqref{wr}). 
We instead consider a weighted discount function (WDF) $h:[0,\infty)\rightarrow (0,1]$ corresponding to
\begin{align}\label{eq:h-function}
h(t)=\int_0^\infty e^{-qt} \, dF(q),
\end{align} 
where $F$ is a cumulative probability distribution function satisfying the following assumption. 

\begin{assumption}[Standing assumption]\label{assum:h}  
$F$ has a finite second moment, is concentrated on $(0,\infty)$, and satisfies $\textstyle{\int_0^\infty} q^{-1} dF(q) < \infty$.
\end{assumption}

In particular, we consider a non-exponentially discounted version of \eqref{wr} that is given in terms of the time-consistent cost criteria $w(x;\cdot,D)$ defined by \eqref{wr}; 
namely, for $D \in \mathcal{E}$ and $X^{D}$ satisfying the SDE \eqref{X}, we define the {\it time-inconsistent} (expected discounted) cost criterion
\begin{align} \label{cost_equation}
\begin{split}
J(x;D) &:= \int_0^\infty w(x;q,D) \, dF(q)\\
&= \mathbb{E}_x\left[ \int_0^{\tau^D_l} h(t) \, f(X^D_t) \, dt + \int_0^{\tau^D_l} h(t) \,  dD_t + \int_{\tau^D_l}^\infty h(t)f(l) dt \right].
\end{split}
\end{align}
It is important to note that the minimisation of \eqref{cost_equation} corresponds to a time-inconsistent problem in the sense that it generally holds that the optimal (in the usual sense) control rule depends explicitly on $X^D_{0-}=x$. 

\begin{remark}[Time-inconsistency and failure of Bellman’s principle of optimality] \label{rem:egBM} 
In view of the expression \eqref{cost_equation} for the cost criterion $J(x; D)$, it is easy to see that a control process $D$ that minimises $J(x; D)$ for a given starting value $X_{0-} = x$ will, in general, \textit{not} minimise the cost criterion $J(y; D)$ for another initial value $X_{0-} = y \neq x$.
For instance, consider the case where $F$ corresponds to $\Pr(q = q_1) = p \in (0,1)$ and $\Pr(q = q_2) = 1 - p$, so that the objective becomes $J(x; D) = p \, w(x; q_1, D) + (1-p) \, w(x; q_2, D)$. Suppose a singular control process $D$ that prescribes Skorokhod reflection of the controlled process $X^D$ at a specific threshold in $(l, r)$ is optimal for initial state $x$; then this same control will, in general, not be optimal for a different starting state $y \neq x$.
In other words, different starting values generally give rise to different optimal decision rules. Consequently, a control strategy that is optimal at the initial time $t = 0-$ for state $x$ will typically not remain optimal at later times $t \geq 0$, as the state $X_t$ will usually differ from $x$. This loss of dynamic optimality is the hallmark of time-inconsistent problems.
\end{remark}

\subsection{Applications}
\label{sec:applications} 

There is a wide range of applications that naturally fit into our time-inconsistent singular stochastic control framework. We highlight two such examples below and analyse specific case studies in Section~\ref{sec:mixopt}.

{\rm (i)}~{\it Optimal inventory control}: 
Here, $X^D$ represents the stochastic inventory level, which is controlled by a decision-maker via the process $(D_t)_{t \geq0}$, denoting the cumulative amount of inventory reduction.
The objective is to minimise the cumulative holding costs associated with excess inventory (modelled by a running cost function $f$), balanced against the cost of reducing inventory, which is proportional to the volume unloaded. 
Time-consistent formulations of such problems have been extensively studied; see, e.g.~\cite{
bather1966continuous,
Brownianinventory1,
Brownianinventory2,
doi:10.1137/21M1442115,
HARRISON1978179,
harrison1983instantaneous}, as well as \cite{zipkin2000foundations} for a general overview of optimal inventory management.

{\rm (ii)}~{\it Irreversible reinsurance problems}: 
In this context, $X^D$ denotes the insurer’s risk exposure, which can be reduced through a reinsurance process $(D_t)_{t \geq0}$, representing the total amount of risk transferred to the reinsurer. 
The objective is to minimise the cumulative expected risk loss over time (again modelled by a function $f$), counterbalanced by the cost of reinsurance. For such formulations and further details, see, e.g.~\cite{liang2024equilibria,yan2022irreversible}.

\section{Admissible controls \& Equilibrium definition}
\label{sec:Admissible}

Due to the presence of time-inconsistency in the singular stochastic control problem introduced in Section \ref{sec:inconsistent},  
we do not define a value function using the traditional minimisation of the cost criterion $J(x; D)$ in \eqref{cost_equation} (cf.\ Remarks \ref{rem:stdprob} and \ref{rem:egBM}). 
Instead, we consider a game-theoretic approach, where the minimisation of $J(x; D)$ is considered in the sense of a properly defined equilibrium. 

In what follows, we first review in Section \ref{sec:strong} the well-known singular control strategy of (Skorokhod) reflection-type at constant thresholds. 
Then, we introduce a novel control strategy in Section \ref{sec:mild} that is absolutely continuous with respect to the Lebesgue measure and is driven by a state-dependent, c\`adl\`ag, and unbounded rate with a possible explosion point. 
In Section \ref{sec:mixed} we construct our general class of admissible control strategies that includes both singular parts with multiple reflection points and jumps, and unbounded non-singular parts with exploding rates. 
Based on these, we finally define our notion of equilibrium in Section \ref{sec:NE}.

\subsection{Strong threshold control strategies}
\label{sec:strong}

We first recall the typical (Skorokhod) reflection control strategy, which appears to be optimal in many singular control problems (see e.g.~the standard time-consistent singular control problem \eqref{Ax} in Remark \ref{rem:stdprob}). 
This is a control process $D$ that prescribes the downward reflection of the controlled process $X^D$ at a constant threshold $b \in (l,r)$, consequently dividing the state-space into a 
 \textit{waiting region} $\mathcal{W}$ where no control is exerted, and a \textit{strong action region} $\mathcal{S}$ where we exert the minimal amount of control to maintain the controlled process $X^D$ in the closure $\overline{\mathcal{W}}$ of the waiting region, $\mathbb{P}$-a.s.~for all $t \geq 0$, which are given by 
$$
\mathcal{W}:= (l,b)
\quad \text{and} \quad 
\mathcal{S}:= [b,r).
$$
Notice that the time spent by the process $X^D$ in the strong action region $\mathcal{S}$ is of Lebesgue measure zero, which motivates the name `strong' action region. 

In the present paper, we refer to such control processes as {\it strong threshold control strategies}. 

\begin{definition} [Strong threshold control strategy] \label{def:strong}
A strong threshold control strategy corresponds to a point $b\in (l,r)$, referred to as the strong threshold, such that the control process $D$ is given by 
$$
D_t := (x-b)^+ + L^{b}_t(X^{D}) , 
\quad 0 \leq t \leq \tau^D_l,
$$
where $L^b(X^D)=(L^b_t(X^D))_{t\geq0}$ is the local time at $b$ of the associated controlled process $X^D$ satisfying the SDE \eqref{X}. 
\end{definition}

For strong threshold control strategies, the resulting controlled SDE in \eqref{X} takes the form (see, e.g.~\cite{alvarez2001singular} for a similar construction)
\begin{align} \label{Xb}
X_t^{D} = x + \int_0^t \mu(X_s^{D}) \, ds + \int_0^t \sigma(X_s^{D}) \, dW_s - (x-b)^+ - L^b_t(X^{D}), 
\quad  0 \leq t \leq \tau^D_l,
\end{align}
where we note that existence of a unique strong solution to this SDE \eqref{Xb} for all $x \in (l,r)$ is a well-known result, corresponding to (one-sided) Skorokhod reflection problems 
(see e.g.~\cite{pilipenko2014introduction}, and \cite{Chitashvili01121981} for two-sided reflection).
This implies that the associated controlled process $X^D$ is reflected downwards at $b$, after a potential initial downward jump to $b$ at time $0$ if it starts from $X^D_{0-} = x > b$.

\subsection{Mild threshold control strategies}
\label{sec:mild}

We now consider the possibility that the control strategy involves a part that is non-singular; in particular, we let the process $X^D$ be controlled by an absolutely continuous control, such that the control process is absolutely continuous with respect to the Lebesgue measure and is given via a state-dependent rate function $u(\cdot)$, namely
$$
D_t=\int_0^tu(X^D_s)ds, \quad t \geq 0.
$$ 
A novelty of our approach is that we study the optimality of such a control process defined by a rate that is given by an unbounded, c\`adl\`ag (right-continuous with left-limits) function $u_\beta:(l,\beta) \rightarrow [0,\infty)$, with a possible explosion point
\begin{align*}
\lim_{x \uparrow \beta} u_\beta(x) = \infty, \quad \text{for some} \quad \beta \in (l,r],
\end{align*}
which is such that $\beta$ becomes an inaccessible boundary for the controlled process $X^D$ (formally introduced in Definition \ref{def:mixed0} below). 
The optimality of such a control process has never been studied in the literature on singular control problems to the best of our knowledge. 

For each such function $u_\beta: (l,\beta) \rightarrow [0,\infty)$, it is easy to see from Assumption \ref{Ass:bounds} that 
\begin{align} \label{cond:weak}
\int_{x_1}^{x_2} \frac{|\mu(z) - u_\beta(z)|}{\sigma^2(z)} dz < + \infty,
\quad \text{for all } l<x_1<x_2<\beta,
\end{align}
and we can therefore define for some arbitrary fixed point $c \in (l,\beta)$ the associated scale function $s_\beta$ by 
\begin{align}\label{scale_def}
s_\beta(x) := \int_{c}^{x} \exp \bigg\{ - 2 \int_{c}^{y} \frac{\mu(z) - u_\beta(z)}{\sigma^2(z)} dz \bigg\} \, dy, \quad \text{for } c \leq x < \beta, 
\end{align}
which clearly has an absolutely continuous first derivative. 

The following definition is considerably different to the one of strong threshold control strategies in Definition \ref{def:strong} (see also their comparison in Section \ref{sec:strong-mild}). 

\begin{definition} [Mild threshold control strategies] \label{def:mixed0}
A pair $(u_\beta,\delta)$ is called a mild threshold control strategy if 
$\beta\in(l,r)$, 
$\delta \in (0,\beta-l)$, 
and 
$u_\beta:(l,\beta)\rightarrow [0,\infty)$ is a c\`adl\`ag function with a finite number of discontinuities on each compact interval, and 
\begin{align}\label{non-exit-entrance-cond}
\lim_{x \uparrow \beta} \bigg\{ \int_c^x s'_\beta(y)\int_c^y \frac{2dz}{s'_\beta(z)\sigma^2(z)} dy \bigg\} = \infty,
\end{align}
for some arbitrary $c\in (l,\beta)$, such that the control process $D$ is given by 
\begin{align*}
D_t := (x-\beta+\delta) \mathbf{1}_{\{x \geq \beta \}} + \int_{0}^{t} u_\beta(X_s^D) ds , 
\quad 0 \leq t \leq \tau^D_l,
\end{align*}
for the associated controlled process $X^D$ satisfying the SDE \eqref{X}. 
\end{definition}

We firstly recall by the Feller’s test for explosion (see e.g.~\cite[Section 5.5]{Karatzas2} or \cite[Section II.1.6]{borodin2012handbook}) that the condition \eqref{non-exit-entrance-cond} implies that the threshold $\beta$ is an inaccessible boundary for the process $X^D$ started from a point $X^D_{0-}=x<\beta$. 
In the following result, which is proved in the Appendix, we show that such a threshold $\beta$ is in fact an entrance-not-exit boundary for the process $X^D$.

\begin{lemma}\label{scale_properties_lemma}
Consider a mild threshold control strategy $(u_\beta,\delta)$ given by Definition \ref{def:mixed0}. 
Then, for $s_\beta$ given by \eqref{scale_def}, we have, for $c\in (l,\beta)$, that  
\begin{align}\label{non-exit-entrance-cond2}
\int_c^{\beta}\frac{2s_\beta(x)}{s_\beta'(x)\sigma^2(x)}dx<\infty, 
\qquad \lim_{x\uparrow\beta}s_{\beta}(x)=\infty
\qquad \text{and} \qquad 
\lim_{x\uparrow \beta} u_\beta(x) = \infty.
\end{align}
Hence, the threshold $\beta$ is an entrance-not-exit boundary for the associated process $X^D$.
\end{lemma}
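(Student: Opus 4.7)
The plan is to establish the three statements in \eqref{non-exit-entrance-cond2} in the order: middle, first, last, and then to invoke Feller's test to conclude that $\beta$ is entrance-not-exit. The starting point is a decomposition of the scale density. From \eqref{scale_def}, writing
\[
s_\beta'(y) = \exp\!\left(2\int_c^y \frac{u_\beta(z)}{\sigma^2(z)} dz\right) \exp\!\left(-2\int_c^y \frac{\mu(z)}{\sigma^2(z)} dz\right) =: e^{2U(y)} g(y),
\]
I observe that, since $\beta \in (l,r)$ and $\sigma$ is Lipschitz and strictly positive on the compact interval $[c,\beta]\subset(l,r)$, the factor $g$ is bounded above and below by positive constants $0 < g_{\min} \leq g \leq g_{\max}$; moreover, $U$ is non-negative and non-decreasing because $u_\beta \geq 0$. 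Hence $s_\beta'(y) \geq g_{\min}$ uniformly on $[c,\beta)$, and one obtains the near-monotonicity $s_\beta'(z) \leq (g_{\max}/g_{\min})\, s_\beta'(y)$ for all $c \leq z \leq y < \beta$.

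The uniform lower bound on $s_\beta'$ immediately implies that the speed density $m_\beta := 2/(s_\beta'\sigma^2)$ is uniformly bounded on $[c,\beta)$, so $M(\beta-) := \int_c^\beta m_\beta(z) dz$ is finite. Combined with the hypothesis \eqref{non-exit-entrance-cond}, this forces $s_\beta(\beta-) = \infty$: indeed, if for contradiction $s_\beta(\beta-) \leq S < \infty$, then
\[
\int_c^\beta s_\beta'(y) \int_c^y m_\beta(z) dz\, dy \leq M(\beta-) \int_c^\beta s_\beta'(y) dy = M(\beta-)\, S < \infty,
\]
contradicting \eqref{non-exit-entrance-cond}. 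The near-monotonicity then gives $s_\beta(y) = \int_c^y s_\beta'(z) dz \leq (g_{\max}/g_{\min})(\beta-c)\, s_\beta'(y)$, so that $s_\beta(y)/s_\beta'(y)$ is uniformly bounded on $[c,\beta)$. Multiplying by $2/\sigma^2$ and integrating over the bounded interval $[c,\beta]$ yields the first assertion of \eqref{non-exit-entrance-cond2}.

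For the pointwise divergence $u_\beta(x) \to \infty$, I would first deduce from $s_\beta(\beta-)=\infty$ and $s_\beta' \leq g_{\max} e^{2U}$ that $\int_c^\beta e^{2U(y)} dy = \infty$. Since $U$ is non-decreasing, its left limit $U(\beta-)\in[0,\infty]$ exists; were it finite, $e^{2U}$ would be bounded on $[c,\beta]$ and the integral finite, a contradiction. Hence $U(\beta-) = \infty$, equivalently $\int_c^\beta u_\beta(z) dz = \infty$ (using that $\sigma^2$ is bounded above on $[c,\beta]$). To convert this divergent integral into the pointwise statement $\lim_{x \uparrow \beta} u_\beta(x) = \infty$, I would exploit the c\`adl\`ag structure with only finitely many discontinuities on each compact subset of $(l,\beta)$: if $\liminf_{x \uparrow \beta} u_\beta(x)$ were finite, there would be a sequence $x_n \uparrow \beta$ along which $u_\beta(x_n)$ is uniformly bounded, and the right-continuity together with the local boundedness supplied by the finite-discontinuity hypothesis on each compact interval should allow one to construct a left neighbourhood of $\beta$ on which $u_\beta$ is bounded, contradicting $\int_c^\beta u_\beta = \infty$.

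The main obstacle, I expect, is this last conversion from divergent integral to pointwise limit: on its own, c\`adl\`ag non-negativity together with a divergent integral does not in general force a pointwise blow-up at the endpoint, and the argument must lean essentially on the admissibility conditions prescribed in Definition~\ref{def:mixed0}. Once the three statements of \eqref{non-exit-entrance-cond2} are in place, Feller's test (see e.g.~\cite[Section~II.1.6]{borodin2012handbook}) applied to the controlled diffusion $dX^D = (\mu-u_\beta)(X^D)\,dt + \sigma(X^D)\,dW$ classifies $\beta$ as entrance-not-exit, since \eqref{non-exit-entrance-cond} delivers inaccessibility while the first assertion of \eqref{non-exit-entrance-cond2} rules out the natural classification.
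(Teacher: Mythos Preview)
Your arguments for the first two assertions of \eqref{non-exit-entrance-cond2} are correct and essentially match the paper's, only in reversed order: you first establish $s_\beta(\beta-)=\infty$ by contradiction from \eqref{non-exit-entrance-cond} (using that the speed measure is finite because $s_\beta'\geq g_{\min}$), and then bound $s_\beta/s_\beta'$ to obtain the integral condition, whereas the paper proves the integral bound directly via the estimate $\int_c^\beta \frac{s_\beta}{s_\beta'\sigma^2}\,dx \leq K_0\int_c^\beta(x-c)\,dx$ and afterwards deduces $s_\beta(\beta-)=\infty$ from \eqref{non-exit-entrance-cond}. Both routes rely on the same decomposition $s_\beta'=e^{2U}g$ with $g$ bounded above and below on $[c,\beta]$.

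For the third assertion you have correctly isolated the genuine difficulty. Your deduction $U(\beta-)=\infty$, hence $\int_c^\beta u_\beta=\infty$, is valid and yields $\limsup_{x\uparrow\beta}u_\beta(x)=\infty$ immediately. However, your proposed upgrade to the full limit does not go through: the hypothesis ``finitely many discontinuities on each compact interval'' refers to compact subintervals of $(l,\beta)$, all of which stay strictly below $\beta$, and therefore imposes no constraint whatsoever on the behaviour of $u_\beta$ in a left neighbourhood of $\beta$. A continuous (hence c\`adl\`ag, with no discontinuities at all) non-negative function on $(l,\beta)$ can have divergent integral while $\liminf_{x\uparrow\beta}u_\beta(x)=0$; nothing in Definition~\ref{def:mixed0} rules this out. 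The paper's own proof simply declares this step ``an immediate consequence'' of $s_\beta(\beta-)=\infty$ and the Lipschitz continuity of $\mu,\sigma$, without further argument, so your scepticism is well placed: from the stated hypotheses one obtains $\limsup=\infty$ directly, but the existence of the limit requires either an additional monotonicity-type assumption on $u_\beta$ near $\beta$ (as is in fact imposed later, e.g.\ in Proposition~\ref{prop_limit_ex}) or should be read as part of the definition rather than a derived conclusion. The entrance-not-exit classification, which is what the rest of the paper actually uses, follows already from the first two assertions together with \eqref{non-exit-entrance-cond} via Feller's test, exactly as you indicate.
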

For mild threshold control strategies, the  controlled SDE in \eqref{X} takes the form
\begin{align} \label{Xubeta}
X_t^{D} = 
x - \mathbf{1}_{\{x \geq \beta\}}(x-\beta+\delta)
+ \int_0^t (\mu - u_\beta)(X_s^{D}) \, ds + \int_0^t \sigma(X_s^{D}) \, dW_s, 
\quad  0 \leq t \leq \tau^D_l,
\end{align}
and it turns out that a unique strong solution to the SDE \eqref{Xubeta} exists for all $x \in (l,r)$; this is established in the next paragraph. 
This implies that the associated controlled process $X^D$ remains in $[l,\beta)$ since $\beta$ is an entrance-not-exit boundary for $X^D$ (thanks to Lemma \ref{scale_properties_lemma}), after a potential initial downward jump to $\beta-\delta$ at time $0$ if it starts from $X^D_{0-} = x \geq \beta$.
Given a mild threshold control strategy, we therefore obtain  
a \textit{waiting region} $\mathcal{W}$, 
a \textit{mild action region} $\mathcal{M}$, 
and 
a \textit{strong action region} $\mathcal{S}$, given by
\begin{align}\label{2412rf}
\mathcal{W}:=\{x\in(l,\beta) :u_\beta(x) = 0\},  \quad
\mathcal{M}:=\{x\in(l,\beta) :u_\beta(x) > 0\}, \quad 
\mathcal{S}:= [\beta,r).
\end{align}
Recall that, contrary to the (Lebesgue measure) zero time spent by the process $X^D$ in the strong action region $\mathcal{S}$, the time spent by the process $X^D$ in the mild action region $\mathcal{M}$ has positive Lebesgue measure, which motivates the name `mild' action region. 

In order to address the existence of a unique strong solution to \eqref{Xubeta} for all $x\in(l,r)$, we equivalently show the existence of a unique strong solution of the SDE 
\begin{align} \label{Xu}
X_t &= x + \int_0^t (\mu - u_\beta)(X_s) \, ds + \int_0^t \sigma(X_s) \, dW_s, 
\quad  0 \leq t \leq \inf\{t\geq 0: X_t =l\}, \quad x\in(l,\beta).
\end{align}
A weak solution to \eqref{Xu} exists by standard arguments up to explosion (cf.~\eqref{cond:weak} and, e.g.~\cite[Chapter 5, Theorem 5.15]{Karatzas2}) or until possible killing at $\inf\{t\geq 0: X_t =l\}$, and it is unique in law. 
In order to establish also the pathwise uniqueness, we define an increasing sequence of stopping times $(\tau_n)_{n\in\N}$ by (cf.~\cite[Chapter 5, proof of Theorem 2.5]{Karatzas2})
$$
\tau_n := \inf \big\{t\geq 0 \;:\; X_t\not\in \big(l+\tfrac{1}{n},\beta-\tfrac{1}{n} \big) \cap (-n,n) \big\}.
$$
For any $n\in\N$ in the above sequence, $\sigma(X_t)$ is positive and bounded away from zero and $(\mu-u_\beta)(X_t)$ is bounded for all $t\in [0,\tau_n)$, which can be combined with the Lipschitz continuity of $\mu$ and $\sigma$ from Assumption \ref{Ass:bounds}, to conclude from  \cite{nakao1972pathwise} that any solution $X$ to \eqref{Xu} is pathwise unique up until $\tau_n$. 
Then, recalling that for any solution $X^D$ (weak or strong) to the SDE \eqref{Xu} the point $\beta$ is an entrance-not-exit boundary for $X^D$, we obtain that 
$$
\lim_{n \to \infty} \tau_n = \inf\{t\geq 0: X_t =l\}, \quad \mathbb{P}-\text{a.s.}
$$
Hence, letting $n \to \infty$, the solution $X^D$ to \eqref{Xu} is also pathwise unique (until possible killing at the hitting time of $l$). 
Finally, we observe that pathwise uniqueness combined with weak existence implies the desired existence of a unique strong solution (see, e.g.~\cite{yamada1971uniqueness} and \cite[Chapter 5]{Karatzas2}) to the SDE \eqref{Xu}. 

\begin{remark}
In the forthcoming analysis of our novel theoretical framework for the time-inconsistent control problem (and the solution of the case study in Section \ref{sec:mixopt}), we rely on the use of a mild threshold control strategy and the fact that $\beta$ is an entrance-not-exit boundary for $X^D$. 
\end{remark}

\subsection{Discussion on strong and mild threshold control strategies}
\label{sec:strong-mild}

Recall that a strong threshold control strategy (Definition \ref{def:strong}) corresponds to a waiting region $\mathcal{W}= (l,b)$ where no control is exerted, and a strong action region $\mathcal{S}=[b,r)$ in which minimal amount of control is exerted in order to maintain the controlled process $X^D$ below $b$. 
This corresponds to the standard solution ansatz for singular control problems, which suggests a split of the state-space into two regions, where we either use no control (i.e.~waiting region $\mathcal{W}$) or use a singular control (i.e.~strong action region $\mathcal{S}$).

On the contrary, for a mild threshold control strategy (Definition \ref{def:mixed0}) we obtain an additional mild action region $\mathcal M$ in which the control is exerted with a positive stochastic rate $u_\beta(X^D)$. 
This is a generalisation of the standard solution ansatz for singular control problems, and the intermediate mild action region $\mathcal{M}$ is a novel feature in singular control theory, which can be viewed as a gradual transition between no control (i.e.~zero rate in the waiting region $\mathcal{W}$) and singular control (i.e.~infinite rate in the strong action region $\mathcal{S}$).    

The strong and mild threshold control strategies enjoy one similarity, that the associated controlled process $X^D$ cannot enter the interior of the strong action region. 
Their difference however lies in the fact that, for a strong threshold control strategy, $X^D$ can reach but cannot cross the strong threshold $b$ from below since it is reflected downwards, while for a mild threshold control strategy, $X^D$ cannot even reach the mild threshold $\beta$ from below since it is inaccessible. 

An additional related fundamental difference is that 
a mild threshold control strategy prescribes an initial jump of the associated controlled process $X^D$ from an initial value $x\geq \beta$ to $\beta-\delta$, i.e.~strictly outside the strong action region $\mathcal{S}=[\beta,r)$ since $\beta$ is inaccessible, 
while a strong threshold control strategy prescribes an initial jump of the associated controlled process $X^D$ from an initial value $x>b$ to $b$, i.e.~the boundary of the strong action region $\mathcal{S}=[b,r)$. 
This technical difference will however be mitigated in the following sections, since the selection of an optimal control process will require taking the limits as $\delta \downarrow 0$ in a certain sense (see \eqref{wrqD}--\eqref{eq:limiting-cost-cr}, below).

\subsection{Generalised threshold control strategies}
\label{sec:general}

We finally consider the possibility that the control strategy may comprise a combination of connected or disconnected strong action $\mathcal{S}$, mild action $\mathcal{M}$, and waiting $\mathcal{W}$ regions. 
These may include, in various parts of the state space, multiple strong thresholds (reflection boundaries for the controlled process, associated with singular controls), a mild threshold (an inaccessible boundary for the controlled process, associated with absolutely continuous controls with exploding rates), multiple jumps, absolutely continuous controls with bounded rates, or no control at all. This general framework encompasses both the strong and mild threshold control strategies defined in Sections \ref{sec:strong}--\ref{sec:mild}, respectively, as special cases (see Remark~\ref{rem:controls} below).

In particular, we allow the strong action region $\mathcal{S}$ to take the form
\begin{align} \label{S}
\begin{split}
\mathcal{S} := 
{\textstyle \bigcup\limits_{i=1}^n} \mathcal{S}_i \subseteq \mathbb{R}, 
\, \text{ for } \; 
\mathcal{S}_i = \begin{cases}
[b_i, a_i], \, i = 1, \ldots, n-1, \\
[b_{n}, r), \, i = n
\end{cases}
\hspace{-3mm}\text{and } \; 
l \leq b_1 \leq a_1 < b_2 \leq a_2 < \ldots < b_{n} \leq a_{n}=r,
\end{split}
\end{align}
for which we denote by $\partial \mathcal{S}$ the set of all its boundary points that can be split into the subsets 
\begin{equation} \label{DefdP}
\ul{\partial \mathcal{S}} := \{b_i \;:\; i=1,\ldots,n\} 
\quad \text{and} \quad 
\ol{\partial \mathcal{S}} := \{ a_i \;:\; a_i\neq b_i, \; i = 1,\ldots, n-1\},
\end{equation}
such that ${\partial \mathcal{S}} = \ul{\partial \mathcal{S}} \cup \ol{\partial \mathcal{S}}$ and $\ul{\partial \mathcal{S}} \cap \ol{\partial \mathcal{S}} = \emptyset$. 

\begin{definition} [Generalised threshold control strategies] \label{def:mixedm}
A triple $(u_\beta,\mathcal{S},\delta)$ is called a generalised threshold control strategy if $\beta \in (l,r]$,   
\begin{enumerate}
\vspace{-1mm}
\item[\rm (i)]
the function $u_\beta$ satisfies the conditions of Definition \ref{def:mixed0} (mild threshold control strategy) in case $\beta<r$, and is a non-negative,  c\`adl\`ag function with a finite number of discontinuities on each compact interval in case $\beta=r$; 

\vspace{-2mm}
\item[\rm (ii)] 
$\mathcal{S}$ satisfies \eqref{S} such that $b_n = \beta$ and $\mathcal{S}_n:=[\beta,r)$ in case $\beta<r$; 

\vspace{-2mm}
\item[\rm (iii)] 
$0<\delta< \beta - l \, \mathbf{1}_{\{n=1\}} - a_{n-1} \,  \mathbf{1}_{\{n\geq 2\}}$ when $\beta<r$, and $\delta=0$ when $\beta=r$,
\end{enumerate}
such that the control process $D$ is given by 
\begin{align}\label{eq:D-forMS}
D_t := 
\sum_{i=1}^{n} (x-b_i) \mathbf{1}_{\{x \in \mathcal{S}_i\}}
+ \delta \, \mathbf{1}_{\{x \geq \beta\}} + \int_{0}^{t} u_\beta(X_s^D) ds + \sum_{i=1}^{n} L^{b_i}_t(X^{D}) \mathbf{1}_{\{b_i < \beta\}} 
+ \sum_{0 < \zeta^D_{a_i} \leq t} \hspace{-1mm} (a_i - b_i) , 
\end{align}
for $0 \leq t \leq \tau^D_l$, where $L^{b_i}(X^D) = (L^{b_i}_t(X^D))_{t\geq0}$ is the local time at $b_i$ of the associated controlled process $X^D$ satisfying the SDE \eqref{X} and 
\begin{align} \label{zeta}
\zeta^D_a := \inf\{t\geq 0 : X^D_{t-} = a\}.
\end{align}
\end{definition}

The condition (i) in Definition \ref{def:mixedm} ensures that $u_\beta$ corresponds to a control process that is absolutely continuous with respect to the Lebesgue measure and has a possible inaccessible explosion point $\beta<r$, or to a (classical) absolutely continuous control strategy $u_r$ when $\beta=r$. 
Then if $\beta<r$, condition (ii) ensures that we have a strong action region above the entrance-not-exit boundary $\beta$, while condition (iii) ensures that the initial jump of $D$ when $x\geq \beta$ is such that the controlled process $X_t^{D}$ is sent strictly below the entrance-not-exit boundary $\beta$, but it is sufficiently small so that $X_t^{D}$ is not sent to another strong action region (notice that $\delta$ plays no other role besides this potential initial jump, cf.~\eqref{eq:D-forMS}).

For generalised threshold control strategies, the resulting controlled SDE in \eqref{X} takes the form 
\begin{align} \label{Xbim}
\begin{split}
X_t^{D} 
&= x + \int_0^t (\mu-u_\beta)(X_s^{D}) \, ds + \int_0^t \sigma(X_s^{D}) \, dW_s \\
&\quad - \delta \mathbf{1}_{\{x \geq \beta\}} - \sum_{i=1}^{n} \Big\{ (x-b_i) \mathbf{1}_{\{x \in \mathcal{S}_i\}} + L^{b_i}_t(X^{D})\mathbf{1}_{\{b_i < \beta\}}  \Big\} - \sum_{0 <\zeta^{D}_{a_i} \leq t} (a_i - b_i),  
\quad  0 \leq t \leq \tau^D_l.
\end{split}
\end{align}
This implies that the associated controlled process $X^D$ (if a solution to the SDE \eqref{Xbim} exists, cf.~Remark \ref{rem:SDEexist} below) is reflected downwards at each $b_i \in \ul{\partial \mathcal{S}} \setminus \{l\}$  and whenever it is in $S \setminus \ul{\partial \mathcal{S}}$ it jumps downwards to the closest point in $\ul{\partial \mathcal{S}}$, i.e.~the minimal amount of (singular) control $D$ is exerted to bring the process $X^D$ outside of $S \setminus \ul{\partial \mathcal{S}}$.
To be more precise, the process $X^D$ can be in the interior of $\mathcal{S}$ only at time $0-$ (when it starts from there), while it may hit at some subsequent times $\zeta^{D}_{a_i} > 0$ the points $a_i \in \ol{\partial \mathcal{S}}$ triggering further jumps in $X^D$.  
Finally, at all other times when $X^D \not\in \mathcal{S}$, the process $X^D$ is controlled via the stochastic rate $u_\beta(X^D)$.  
In this case, we therefore obtain a \textit{waiting region} $\mathcal{W}$, a \textit{mild action region} $\mathcal{M}$, and a \textit{strong action region} $\mathcal{S}$, given by 
\begin{align} \label{SiWiMi}
\begin{split}
\mathcal{W} &:= \{x \not\in \mathcal{S} : u_\beta(x) = 0\}
= \{x\in \cup_{i=2}^{n} (a_{i-1}, b_i) \cup (l, b_1) : u_\beta(x) = 0\} , \\
\mathcal{M} &:= \{x \not\in \mathcal{S} : u_\beta(x) > 0\} 
= \{x\in \cup_{i=2}^{n} (a_{i-1}, b_i) \cup (l, b_1) : u_\beta(x) > 0\}, \\
\mathcal{S} &:= \cup_{i=1}^{n-1} [b_i, a_i] \cup [b_{n}, r) .
\end{split}
\end{align}

\begin{remark}[Special cases of generalised threshold control strategies] \label{rem:controls}
By selecting the parameters in Definition \ref{def:mixedm} appropriately, we can recover well-known control strategies and the newly introduced mild threshold one, as follows: 
\begin{itemize}
\vspace{-2mm}
\item $\beta=r$, $u_\beta(x)=0$ for all $x\in(l,r)$, and $\mathcal{S}=[b,r)$ with $ b\in (l,r)$ corresponds to a strong threshold control strategy $(0,[b,r),0)$ satisfying Definition~\ref{def:strong}. 

\vspace{-2mm}
\item $\beta<r$ and $\mathcal{S}=[\beta,r)$ corresponds to a mild threshold control strategy $(u_\beta,[\beta,r),\delta)$ satisfying Definition~\ref{def:mixed0}.

\vspace{-2mm}
\item $\beta=r$ and $\mathcal{S}=\emptyset$ corresponds to a classical absolutely continuous control strategy $(u_r,\emptyset,0)$ such that  
\begin{align*}
D_t := \int_{0}^{t} u_r(X_s) ds ,\quad 0 \leq t \leq \tau^D_l.
\end{align*}
for the associated controlled process $X^D$ satisfying the SDE \eqref{X}. 

\vspace{-2mm}
\item $\beta=r$ and $u_\beta(x)=0$ for all $x\in(l,r)$ corresponds to a singular control strategy $(0,\mathcal{S},0)$ with potentially multiple reflection boundaries and jumps associated with disconnected (strong) action and waiting regions. 
\end{itemize}
\end{remark}

\begin{remark}[Further generalisation of control strategies] \label{rem:multi-beta}
It is possible to further generalise Definition \ref{def:mixedm} to the case of a function $u_\beta$ that allows for multiple explosion points and inaccessible boundaries (cf.~Definition \ref{def:mixed0}). 
Since this generalisation would not introduce further mathematical difficulties, but would only increase notational complexity in the paper, we consider a single explosion point. 
\end{remark}

\subsection{The general class of admissible control strategies}
\label{sec:mixed}
In this section, we aim at defining the class of our admissible control strategies based on the generalised threshold control strategies in Section \ref{sec:general}. 
The study of the optimality of control strategies in all forms included in this class is novel in the literature on singular control problems; especially, the search of optimal control strategies that create inaccessible boundaries for the controlled processes; see Section \ref{sec:intro} for details. 

In order to introduce our class of admissible controls, we first note that for each generalised threshold control strategy $(u_\beta,\mathcal{S},\delta)$ such that a unique strong solution to the SDE \eqref{Xbim} exists (cf.~Remark \ref{rem:SDEexist} for a discussion in this direction), we denote 
the associated control process $D$ in \eqref{eq:D-forMS} by $D^{u_\beta,\mathcal{S},\delta}$, 
the associated controlled process $X^D$ in \eqref{Xbim} by $X^{u_\beta,\mathcal{S},\delta}$, the absorption time $\tau^D_l$ in \eqref{tau} by $\tau^{u_\beta,\mathcal{S},\delta}_l$, 
the jump times $\zeta^D_a$ in \eqref{zeta} by $\zeta^{u_\beta,\mathcal{S},\delta}_a$, 
and the associated time-consistent cost criterion $w(x;q,D)$ in \eqref{wr} by 
\begin{align} \label{wrd} 
\begin{split}
w(x;q,u_\beta,\mathcal{S},\delta) = \mathbb{E}_x\bigg[ &\int_0^{\tau^{u_\beta,\mathcal{S},\delta}_l} \hspace{-6mm} e^{-q t} \, (f+u_\beta)(X^{u_\beta,\mathcal{S},\delta}_t)\, dt + \delta \mathbf{1}_{\{x \geq \beta\}} + \sum_{i=1}^{n} (x-b_i) \mathbf{1}_{\{x \in \mathcal{S}_i\}} 
+ \hspace{-5mm}\sum_{0 <\zeta^{u_\beta,\mathcal{S},\delta}_{a_i} \leq t} \hspace{-4mm}(a_i - b_i) \\
&+ \sum_{i=1}^{n} \int_0^{\tau^{u_\beta,\mathcal{S},\delta}_l} \hspace{-6mm} e^{-q t} \, d L^{b_i}_t(X^{u_\beta,\mathcal{S},\delta}) \mathbf{1}_{\{b_i < \beta\}} + e^{-q \tau^{u_\beta,\mathcal{S},\delta}_l} q^{-1}f(l) \bigg], \quad \text{for all } x\in(l,r).
\end{split}
\end{align}

\begin{remark}
\label{rem:on-admiss}
If a generalised threshold control strategy $(u_\beta,\mathcal{S},\delta)$ is such that the corresponding controlled SDE has a unique strong  solution for $0 \leq t \leq \tau^D_l$ and $x \in (l,r)$, then this also holds when replacing $\delta$ with any $\tild \delta \in (0,\delta)$. 
This is directly verified by using Definition \ref{def:mixedm}.  
\end{remark}

Recall that the sole purpose of the parameter $\delta$ in Definition \ref{def:mixedm} of generalised threshold control strategies is to ensure that the initial jump of $D$ when $x\geq \beta$ sends $X^{D}$ strictly below the entrance-not-exit boundary point $\beta$. However, we are further interested in allowing the initial jump to be such that the process is sent to the entrance-not-exit boundary point $\beta$, which we obtain by sending $\delta \downarrow 0$ in \eqref{wrd}; to this end we define the {\it limiting} time-consistent cost criterion $w(x;q, u_\beta, \mathcal{S})$ by 
\begin{align} \label{wrqD}
w(x;q, u_\beta, \mathcal{S}) 
&:= \begin{cases} 
w(x;q, u_\beta, \mathcal{S},\delta), & \text{for } x\in(l,\beta), \\
\lim_{\delta\to 0} w(x;q,u_\beta,\mathcal{S},\delta), & \text{for } x \in [\beta,r),
\end{cases} 
\end{align}
(Recall that $\delta$ plays no role when $x<\beta$).
Sufficient conditions for these limits to exist are given in the following general result, which is proved in the Appendix and is applicable beyond the context of this paper. 

\begin{proposition} \label{prop_limit_ex} 
Suppose that $(u_{\beta},\mathcal{S},\delta)$ is a generalised threshold control strategy with $\beta<r$, 
such that the associated SDE \eqref{Xbim} has a unique strong solution $X^D$ for all $X_{0-}^D= x \in (l,r)$.
Suppose also that $w(x;q,u_\beta,\mathcal{S},\delta)<\infty$ for $x\in (l,\beta)$ and that there exists a constant $\epsilon>0$ such that $u_\beta(\cdot)$ is increasing on $(\beta-\epsilon,\beta)$. 
Then the limit in \eqref{wrqD} exists and is finite.
\end{proposition}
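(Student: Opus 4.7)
My plan is to reduce the claim to showing that the function $V(y):=w(y;q,u_\beta,\mathcal{S},\delta)$, viewed as a function of $y\in(l,\beta)$, admits a finite left limit at $\beta$. First observe that $V(y)$ is well-defined independently of $\delta$ for $y\in(l,\beta)$: by the entrance-not-exit property of $\beta$ in Lemma \ref{scale_properties_lemma}, the controlled trajectory issued from any such $y$ never visits $[\beta,r)$, so the $\delta\mathbf{1}_{\{x\geq\beta\}}$ term in \eqref{wrd} never activates. Inspecting \eqref{wrd} for $x\in[\beta,r)$ then yields
\begin{align*}
w(x;q,u_\beta,\mathcal{S},\delta) = (x-\beta+\delta) + V(\beta-\delta),
\end{align*}
so it suffices to prove that $\lim_{\delta\downarrow 0}V(\beta-\delta)$ exists and is finite. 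I would fix $y_0\in(\max\{a_{n-1},\beta-\varepsilon\},\beta)$, with the convention $a_0:=l$ when $n=1$; by construction $(y_0,\beta)$ is disjoint from $\mathcal{S}$, lies in the region where $u_\beta$ is increasing, and every trajectory $X_t^D$ started at $y\in(y_0,\beta)$ remains in $(y_0,\beta)$ until the (a.s.~finite) first hitting time $\tau_{y_0}:=\inf\{t\geq 0:X_t^D=y_0\}$, by regularity of the diffusion and inaccessibility of $\beta$. The strong Markov property then yields the decomposition
\begin{align*}
V(y) = \E_y\Big[\int_0^{\tau_{y_0}} e^{-qt}(f+u_\beta)(X_t^D)\,dt\Big] + \E_y[e^{-q\tau_{y_0}}]\,V(y_0),
\end{align*}
with $V(y_0)<\infty$ by hypothesis, reducing the task to an analysis of each summand as $y\uparrow\beta$.

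For monotonicity I would couple the strong solutions started from $y_1<y_2$ in $(y_0,\beta)$ by a common Brownian motion; pathwise uniqueness combined with the one-dimensional SDE comparison theorem then gives $X_t^{(1)}\leq X_t^{(2)}$ pathwise, and in particular $\tau_{y_0}^{(1)}\leq \tau_{y_0}^{(2)}$. Since $f+u_\beta$ is non-negative and non-decreasing on $(y_0,\beta)$ (by Assumption \ref{Ass:f} and the monotonicity hypothesis on $u_\beta$), this gives that $y\mapsto\E_y[\int_0^{\tau_{y_0}}e^{-qt}(f+u_\beta)(X_t^D)\,dt]$ is non-decreasing and $y\mapsto\E_y[e^{-q\tau_{y_0}}]$ is non-increasing on $(y_0,\beta)$. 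For uniform boundedness of the running-cost summand, the $f$-contribution is trivially bounded by $\sup_{[l,\beta]}f/q<\infty$ since $\beta<r$ and $f$ is continuous. The $u_\beta$-contribution is the main obstacle because $u_\beta$ diverges at $\beta$; I would handle it via the Dynkin/It\^o identity obtained by applying It\^o's formula to $e^{-qt}X_t^D$ on $[0,\tau_{y_0}\wedge t]$ (with the stochastic integral a true martingale since $\sigma$ is bounded on $[y_0,\beta]$), taking expectations, rearranging, and passing $t\to\infty$ by monotone/bounded convergence, which produces
\begin{align*}
\E_y\Big[\int_0^{\tau_{y_0}} e^{-qs}u_\beta(X_s^D)\,ds\Big] = y - y_0\E_y[e^{-q\tau_{y_0}}] - q\E_y\Big[\int_0^{\tau_{y_0}} e^{-qs}X_s^D\,ds\Big] + \E_y\Big[\int_0^{\tau_{y_0}} e^{-qs}\mu(X_s^D)\,ds\Big].
\end{align*}
Using $X_s^D\in[y_0,\beta]$ for $s\leq\tau_{y_0}$ and the fact that $\mu$ is bounded on $[y_0,\beta]$ by Assumption \ref{Ass:bounds}, the right-hand side is bounded by a constant depending only on $\beta,y_0,q$ and $\sup_{[y_0,\beta]}|\mu|$, uniformly in $y\in(y_0,\beta)$.

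Combining the monotonicity and uniform boundedness, the first summand in the strong Markov decomposition is non-decreasing and bounded, while the second is non-increasing and non-negative; both therefore admit finite limits as $y\uparrow\beta$, so $V(\beta-):=\lim_{y\uparrow\beta}V(y)$ exists in $[0,\infty)$. Passing to the limit in the initial identity yields $\lim_{\delta\downarrow 0}w(x;q,u_\beta,\mathcal{S},\delta) = (x-\beta)+V(\beta-)<\infty$, completing the proof. The main conceptual point is that the Dynkin/It\^o identity converts the a priori singular $u_\beta$-integral near $\beta$ into a quantity controlled purely by the boundary values of $X_t^D$ and the bounded drift $\mu$; this is precisely what makes an exploding control rate compatible with finite expected cost.
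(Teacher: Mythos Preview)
Your proof is correct and shares the same overall architecture with the paper's: reduce to showing $\lim_{y\uparrow\beta}w(y;q,u_\beta,\mathcal{S},\delta)$ exists and is finite, decompose via the strong Markov property at a point $y_0<\beta$, establish monotonicity of the two summands by a comparison argument (the paper invokes the same principle in its Step~4), and bound the potentially singular $u_\beta$-contribution uniformly in $y$.

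The substantive difference lies in this last bounding step. The paper controls the \emph{undiscounted} expectation $\E_x\big[\int_0^{\zeta^D_a}u_\beta(X_t^D)\,dt\big]$ via the explicit Green-function representation from Karlin--Taylor together with a scale-function estimate (their Steps~1--2), which in turn relies on the integration-by-parts identity $\int g\,g''/(g')^2\,dy=-g/g'+\int 1\,dy$. Your It\^o identity for $e^{-qt}X_t^D$ is more direct and elementary: since $-u_\beta(X_t^D)\,dt$ is literally a term in $dX_t^D$, applying It\^o's formula to the state process itself isolates the discounted $u_\beta$-integral, and the remaining terms are automatically bounded because $X_t^D\in[y_0,\beta]$ and $\mu$ is continuous on that compact set. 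This genuinely simplifies the argument by bypassing the scale-function machinery entirely; the trade-off is that your bound uses the discounting ($q>0$) in an essential way, whereas the paper's estimate also holds for $q=0$---a generality they do not actually need for this proposition. One minor remark: your assertion that $\tau_{y_0}$ is a.s.\ finite does not follow from regularity and inaccessibility of $\beta$ alone, but it is also not needed---your decomposition and your It\^o identity both remain valid with the convention $e^{-q\cdot\infty}=0$, since $e^{-q(\tau_{y_0}\wedge t)}X^D_{\tau_{y_0}\wedge t}$ is bounded and hence converges by dominated convergence as $t\to\infty$.
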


We can further define the associated {\it limiting} time-inconsistent cost criterion $J(x; u_\beta, \mathcal{S})$ by
\begin{align} \label{eq:limiting-cost-cr}
\begin{split}
J(x; u_\beta, \mathcal{S}) 
&:= 
\lim_{\delta \to 0} J(x;u_\beta,\mathcal{S},\delta) 
= \lim_{\delta \to 0} \int_0^\infty w(x;q, u_\beta, \mathcal{S}, \delta) \, dF(q) \\
&=\begin{cases} 
\int_0^\infty w(x;q, u_\beta, \mathcal{S}) dF(q), & \text{for } x\in(l,\beta), \\
\lim_{\delta\to 0} \int_0^\infty w(x;q, u_\beta, \mathcal{S},\delta) dF(q), & \text{for } x \in [\beta,r).
\end{cases}
\end{split}
\end{align}

\begin{remark}\label{ghghffsa}
Note that the limit in \eqref{eq:limiting-cost-cr} exists under the conditions of Proposition \ref{prop_limit_ex} together with suitable integrability assumptions on $w(\cdot; \cdot, u_\beta, \mathcal{S})$ and $F$, namely that there exists a locally bounded function $x \mapsto C(x,q)$ such that for all $x\in (l,r)$ and $q\in {\rm supp}(F)$ we have
\begin{align*}
w(x;q, u_\beta, \mathcal{S}) \leq C(x,q) 
\quad \text{and} \quad 
\int_0^\infty C(x,q)dF(q)<\infty,
\end{align*}
which allows application of the dominated convergence theorem; see Lemma~\ref{useful-lemma-2} below for a result in this direction. 
\end{remark}

We are now ready to define our notion of admissible control strategies $D$ in the form of \eqref{eq:D-forMS} which requires that their corresponding SDE in \eqref{Xbim} admits a unique strong solution (see Remark \ref{rem:SDEexist} below regarding existence and uniqueness of solutions to SDEs of this type) and their corresponding limiting cost criteria in \eqref{wrqD}--\eqref{eq:limiting-cost-cr} exist and are finite.   
The set of {\it admissible control strategies} $\mathcal{A}$ is therefore defined by
\begin{align} \label{adm-set}
\begin{split}
\mathcal{A} := \{ &(u_\beta,\mathcal{S},\delta) : (u_\beta,\mathcal{S},\delta) \text{ is a generalised threshold control strategy satisfying Definition \ref{def:mixedm},} \\
&\text{the associated SDE \eqref{Xbim} has a unique strong solution for all $x \in (l,r)$, and the associated} \\
&\text{limiting cost criteria in \eqref{wrqD}--\eqref{eq:limiting-cost-cr} exist and are finite for all $x \in (l,r)$,  and all $q\in {\rm supp}(F)$}\}.
\end{split}
\end{align}
In what follows we aim at defining an appropriate equilibrium definition for the comparison between the cost criterion \eqref{cost_equation} corresponding to an admissible control strategy $(u_\beta,\mathcal{S},\delta) \in \mathcal{A}$, which is given by $J(\cdot; u_\beta, \mathcal{S})$ in \eqref{eq:limiting-cost-cr}, 
and the value obtained when deviating from this strategy locally around each initial value $X^D_{0-}=x$, for all $x \in (l,r)$. 
For each such $x \in (l,r)$, each admissible \textit{deviation} control strategy at $x$, should require the existence of a solution to the associated SDE only locally around $x$. 
Therefore, for each $x \in (l,r)$, we define the associated set of {\it admissible deviation control strategies} $\mathcal{A}_x$ by
\begin{align} \label{adm-dev-set}
\begin{split}
\mathcal{A}_x := \big\{ &(u_\beta,\mathcal{S},\delta) : (u_\beta,\mathcal{S},\delta) \text{ is a generalised threshold control strategy satisfying} \\
&\text{Definition \ref{def:mixedm}, and $\exists\;\epsilon = \epsilon(x)$ in $(0,x-l)$ so that the associated SDE} \\
&\text{in \eqref{Xbim} defined for $0 \leq t \leq \theta^{u_\beta,\mathcal{S},\delta}_{x,\epsilon}$ has a unique strong solution} \big\},
\end{split}
\end{align}
 where 
\begin{align}\label{eq:tau-eps-stop}
\theta^{u_\beta,\mathcal{S},\delta}_{x,\epsilon} := 
\inf \big\{t\geq 0: X_t^{u_\beta,\mathcal{S},\delta} \notin (x-\epsilon,x+\epsilon)\big\} > 0. 
\end{align}
To avoid daunting notation, we denote the controlled process $X^{u_\beta,\mathcal{S},\delta}$ evaluated at $\theta^{u_\beta,\mathcal{S},\delta}_{x,\epsilon}$ by  $X_{\theta_{x,\epsilon}}^{u_\beta,\mathcal{S},\delta}$.

\begin{remark}
\label{rem:SDEexist}
Any strong threshold control strategy $(0,[b,r),0)$, mild threshold control strategy  $(u_\beta,[\beta,r),\delta)$ and absolutely continuous control strategy $(u_r,\emptyset,0)$, leads to the existence and uniqueness of a strong solution to their corresponding SDEs (cf.~Sections \ref{sec:strong}--\ref{sec:mild}). 
Hence, these strategies are admissible deviation strategies (i.e.~belong to $\mathcal{A}_x$), and if their corresponding limiting cost criteria exist (which is trivially true for strong threshold and absolutely continuous control strategies), then these are also admissible (i.e.~belong to $\mathcal{A}$). 
Note, however, that the set of possible deviation control strategies $\mathcal{A}_x$ is very large; indeed it contains all generalised threshold control strategies (Definition \ref{def:mixedm}) such that their corresponding SDEs \eqref{Xbim} admit unique strong solutions locally. The question of existence of unique strong solutions to SDEs of this type that can involve (a subset of) reflecting boundaries, jumps, absolutely continuous controls, and inaccessible boundaries, is well-studied in the theory of SDEs; we refer the interested reader to (a small selection of works)
\cite{
bass2007pathwise,
bass2005one,
blei2013one,
blei2013note,
dupuis1993sdes,
oksendal2019applied,
tanaka1979stochastic,
yang2023strong}.
\end{remark}

\subsection{Equilibrium}
\label{sec:NE}

The decision-maker in our control problem aims at selecting an 
admissible control strategy $(u_\beta,\mathcal{S},\delta) \in \mathcal{A}$ to minimise the corresponding cost criterion in \eqref{cost_equation}, which takes the form of $J(\cdot; u_\beta, \mathcal{S})$ in \eqref{eq:limiting-cost-cr}. 
However, since this problem is in general time-inconsistent (cf.~Remark \ref{rem:egBM}), 
instead of searching for minimising controls in the usual sense (cf.~Remark \ref{rem:stdprob}), the decision-maker searches for a suitably defined equilibrium control strategy.
 
\begin{definition}[Equilibrium]
\label{def:equ_stop_time} 
Recall that strong action regions $\mathcal{S}$ are of the form \eqref{S}--\eqref{DefdP}, 
the set of admissible control strategies is $\mathcal{A}$ from \eqref{adm-set} and admissible deviation control strategies is $\mathcal{A}_{x}$ from \eqref{adm-dev-set}--\eqref{eq:tau-eps-stop}. 

The pair $(u^*,\mathcal{S}^*)$ is called an equilibrium strategy if: 
$(u^*,\mathcal{S}^*,\delta^*) \in \mathcal{A}$ for some $\delta^*\geq 0$ with associated limiting cost criteria 
$w(\cdot;q,u^*,\mathcal{S}^*)$ and $J(\cdot; u^*,\mathcal{S}^*)$ defined by \eqref{wrqD}--\eqref{eq:limiting-cost-cr};
and for each $x\in(l,r)$ and any $(u_\beta, \mathcal{S}, \delta)\in \mathcal{A}_{x}$ with associated deviation control process $D^{u_\beta,\mathcal{S},\delta}$ as in \eqref{eq:D-forMS},  controlled process $X^{u_\beta,\mathcal{S},\delta}$ as in \eqref{Xbim} and $\theta^{u_\beta,\mathcal{S},\delta}_{x,\epsilon}$ as in \eqref{eq:tau-eps-stop}, 
the following two conditions hold: 
\begin{enumerate}[\rm (i)]
\vspace{-2mm}
\item In case 
$x \in (l,\beta) \cap (\mathcal{S}^c \cup \ul{\partial \mathcal{S}})$, we have 
\begin{align}\label{EQ1}
\begin{split}
L(x):=
\limsup_{\epsilon \downarrow 0} 
\bigg\{ \bigg( &J(x; u^*, \mathcal{S}^*) - \int_0^\infty \bigg\{ 
\mathbb{E}_x \bigg[ \int_{0}^{\theta^{u_\beta,\mathcal{S},\delta}_{x,\epsilon}} e^{-qt} f\big( X_t^{u_\beta,\mathcal{S},\delta} \big) dt + \int_{0}^{\theta^{u_\beta,\mathcal{S},\delta}_{x,\epsilon}} e^{-qt} d D^{u_\beta,\mathcal{S},\delta}_t  \\
&+ e^{-q \theta^{u_\beta,\mathcal{S},\delta}_{x,\epsilon}} \, w\Big( X^{u_\beta,\mathcal{S},\delta}_{\theta_{x,\epsilon}};q, u^*, \mathcal{S}^* \Big)\bigg] \bigg\} dF(q) \bigg) \bigg/ \E_x\left[\theta^{u_\beta,\mathcal{S},\delta}_{x,\epsilon} \right] \bigg\}
\leq 0;
\end{split}
\end{align}

\vspace{-2mm}
\item In case $x \in [\beta,r) \cup (\mathcal{S} \setminus \ul{\partial \mathcal{S}})$, we have
\begin{align}\label{EQ2}
\begin{split}
J(x; u^*, \mathcal{S}^*) \leq 
& \sum_{i=1}^{n} \big( J(b_i; u^*, \mathcal{S}^*) + x -  b_i \big) \mathbf{1}_{\{x \in \mathcal{S}_i\}} 
+ \big( J(\beta-\delta; u^*, \mathcal{S}^*) - J(\beta; u^*, \mathcal{S}^*) 
+ \delta \big) \mathbf{1}_{\{x \geq \beta\}}.
\end{split}
\end{align}
\end{enumerate}
\end{definition}
In what follows, we provide a game-theoretic interpretation of the equilibrium strategy $(u^*,\mathcal{S}^*)$ in  Definition \ref{def:equ_stop_time}, which is in line with the usual interpretation of the game-theoretic approach to time-inconsistent control problems (see Section \ref{sec:intro} for more details). 

We first note that, if the equilibrium strategy $(u^*,\mathcal{S}^*)$ is used by the decision maker, then the value of this strategy is given by $J(x; u^*, \mathcal{S}^*)$ as in \eqref{eq:limiting-cost-cr}, when the initial state of the process takes the value $x \in (l,r)$.
At each $x$, the decision-maker may deviate unilaterally from the equilibrium strategy $(u^*,\mathcal{S}^*)$, by  using a deviation control strategy $(u_\beta, \mathcal{S}, \delta)\in \mathcal{A}_{x}$ until the associated state process exits from the interval $(x-\epsilon,x+\epsilon)$ at time $\theta^{u_\beta,\mathcal{S},\delta}_{x,\epsilon}$ from~\eqref{eq:tau-eps-stop}, and then continue according to the equilibrium strategy $(u^*,\mathcal{S}^*)$.
Given that $\epsilon>0$ is eventually sent to zero, we notice that only local deviation is possible. 

Now, if the decision-maker chooses to deviate without causing a jump in the state process, i.e.~when $x$ and $(u_\beta, \mathcal{S}, \delta)$ are such that $x \in (l,\beta) \cap (\mathcal{S}^c \cup \ul{\partial \mathcal{S}})$, then the corresponding value for a sufficiently small $\epsilon>0$ becomes 
$$
\int_0^\infty \bigg\{ 
\mathbb{E}_x \bigg[ \int_{0}^{\theta^{u_\beta,\mathcal{S},\delta}_{x,\epsilon}} e^{-qt} f\big( X_t^{u_\beta,\mathcal{S},\delta} \big) dt + \int_{0}^{\theta^{u_\beta,\mathcal{S},\delta}_{x,\epsilon}} e^{-qt} d D^{u_\beta,\mathcal{S},\delta}_t  + e^{-q \theta^{u_\beta,\mathcal{S},\delta}_{x,\epsilon}} \, w\Big( X^{u_\beta,\mathcal{S},\delta}_{\theta_{x,\epsilon}};q, u^*, \mathcal{S}^* \Big) \bigg] \bigg\} dF(q),
$$
where the integrals inside the expectation represent the value from deviation and the latter term that the decision maker continues according to the equilibrium strategy from the exit time $\theta^{u_\beta,\mathcal{S},\delta}_{x,\epsilon}$ onwards. 
Given that the value of such a strategy results in a higher cost compared with the equilibrium value in the numerator of \eqref{EQ1}, the interpretation of Definition \ref{def:equ_stop_time}.(i) is that the decision-maker has no incentive to deviate in this~way.

Finally, if the decision-maker chooses to deviate by causing an immediate jump in the state process, i.e.~when $x$ and $(u_\beta, \mathcal{S}, \delta)$ are such that $x \in [\beta,r) \cup (\mathcal{S} \setminus \ul{\partial \mathcal{S}})$, then the process jumps to either $\beta-\delta$ (in case $x \in [\beta,r)$) or $b_i$ (in case $x \in  {\cal S}_i\setminus \ul{\partial \mathcal{S}}$ for some $i \in \{1,\ldots,n-1\}$.
The corresponding value thus consists of the size of this initial jump and the value of following $(u^*,\mathcal{S}^*)$ after the jump, which corresponds to the right-hand side in \eqref{EQ2}. 
Given that this results in a higher cost compared with the equilibrium value in the left-hand side of \eqref{EQ2}, the interpretation of Definition \ref{def:equ_stop_time}.(ii) is also that the decision-maker has no incentive to deviate in this way.

\section{A verification theorem for strong and mild threshold control strategies}
\label{sec:main}

In this section, we provide a general verification theorem with necessary and sufficient conditions for obtaining equilibrium strategies that involve a connected strong action region of the form of $\mathcal{S}=[b,r)$ for some $b\in (l,r)$. 
Namely, we  provide the means for obtaining equilibria in the form of generalised threshold control strategies (Definition \ref{def:mixedm}) satisfying $(u_\beta, \mathcal{S}, \delta)=(u_\beta, [b,r), \delta)$, with the following two possible cases:
\begin{itemize}
\vspace{-2mm}
\item If $b<\beta =r$, then the function $u_{\beta}=u_r$, representing the control rate $u_r(X^{u_r, [b,r),\delta})$ of an absolutely continuous control with respect to the Lebesgue measure, does not explode. In this case, $b$ corresponds to a reflecting boundary point for the controlled process $X^{u_r,[b,r),\delta}$, and particularly to a strong threshold (e.g.\ Definition \ref{def:strong}) if $u_r \equiv 0$. Furthermore, recall that this strategy is independent of $\delta$;

\vspace{-2mm}
\item If $b=\beta<r$, then the control strategy involves a part that is absolutely continuous with respect to the Lebesgue measure, given by a non-zero control rate $u_\beta(X^{u_\beta, [\beta,r),\delta})$ such that $\beta$ is an inaccessible boundary for the controlled process $X^{u_\beta, [\beta,r),\delta}$. In this case, $b=\beta$ corresponds to a mild threshold (cf.\ Definition \ref{def:mixed0}). 
\end{itemize}
\vspace{-1mm}
For clarification, we should point out here that our verification theorem does \textit{not}, however, restrict the deviation control strategies; recall that we search for equilibria corresponding to the Definition \ref{def:equ_stop_time}. Furthermore, our verification theorem involves both necessary and sufficient conditions for the optimality of such generalised threshold control strategies, which is one of our main mathematical contributions in this paper. 
Its importance is later illustrated via our case studies in Section \ref{sec:mixopt}, where it provides a pathway for first obtaining an equilibrium in strong threshold control strategies for one case, then for proving that there is no equilibrium strategy in strong threshold control strategies in another case, and finally for finding an equilibrium strategy in mild threshold control strategies instead. 

It is important for the subsequent analysis to define the set $\mathcal{J}_u$ of discontinuity points of each c\`adl\`ag function $u_\beta:(l,\beta) \to [0,\infty)$ that satisfies the conditions of Definition  \ref{def:mixedm}  
by 
\begin{align*}
    \mathcal{J}_u=\{x\in(l,\beta):u_\beta(x-) \neq u_\beta(x)\}
\end{align*}
and recall that this set is finite on each compact interval.  
We also recall the waiting region $\mathcal{W}_u$ and the mild action region $\mathcal{M}_u$ associated to $u_\beta$, defined as in \eqref{SiWiMi}, which take the form  
\begin{align}\label{eq:mild-string-u}
\mathcal{W}_u:=\{x\in(l,\beta) :u_\beta(x) = 0\},
\quad 
\mathcal{M}_u:=\{x\in(l,\beta) :u_\beta(x) > 0\}. 
\end{align}
Finally, we denote by $\Af$ the infinitesimal generator of the uncontrolled process $X^0$ satisfying \eqref{X0}, which is defined at least for functions $G \in {\cal C}^2(l,r)$ by
\begin{align*}
\mathbf{A} G(x) := \mu(x) G'(x) + \frac{1}{2}\sigma^2(x) G''(x), 
\quad \text{for } x \in (l,r).
\end{align*}

Before reporting the verification theorem, we need the following important technical result on the existence and finiteness of the limiting cost criterion $w(\cdot;q,u_\beta,[b,r))$ defined by \eqref{wrqD} for $\mathcal{S}=[b,r)$.  

\begin{lemma}\label{useful-lemma} 
Consider a strong action region $\mathcal{S}=[b,r)$ and a generalised threshold control strategy $(u_\beta,\mathcal{S},\delta)$ satisfying Definition \ref{def:mixedm} with either $l<b<\beta=r$ or $l<b=\beta<r$, such that the associated SDE (cf.~\eqref{Xbim}) has a unique strong solution $X^{u_\beta,\mathcal{S},\delta}$ for all starting values $x \in (l,r)$. 
For a given constant $q>0$, if the boundary value problem (BVP)
\begin{align}
&f(x) + (\mathbf{A}-q)v(x;q) - u_\beta(x) v'(x;q) + u_\beta(x) = 0, \quad x \in (\mathcal{W}_u \cup \mathcal{M}_u) \setminus \mathcal{J}_u, 
\label{BVPaa}\\
&v(x;q) = x - b + v(b;q), \qquad \qquad \qquad \qquad \qquad \qquad \, x \in \mathcal{S}=[b,r), 
\label{BVPab}\\
&v(l;q):=v(l+;q)=q^{-1}f(l), 
\label{BVPac}\\
& \text{$v(\cdot;q)\in \mathcal{C}^2\big( (l,r) \setminus (\mathcal{J}_u\cup \partial\mathcal{S}) \big) \cap \mathcal{C}^1(l,r)$ with $|v''(x\pm;q)|<\infty$ for all $x\in (l,r)$
}\label{BVPad}
\end{align} 
has a solution, then the limiting cost criterion $w(\cdot;q,u_\beta,\mathcal{S})$ defined by \eqref{wrqD} exists and is given by  
\begin{align} \label{w=w} 
w(x;q,u_\beta,\mathcal{S}) = v(x;q), \quad x \in [l,r).
\end{align}
\end{lemma}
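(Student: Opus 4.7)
The strategy is a verification argument: show that the candidate BVP solution $v(\cdot;q)$ coincides with the limiting cost $w(\cdot;q,u_\beta,\mathcal{S})$ by applying a generalised It\^o formula to $e^{-qt}v(X^{u_\beta,\mathcal{S},\delta}_t;q)$ along the dynamics \eqref{Xbim} and matching each resulting term to the corresponding item in \eqref{wrd}--\eqref{wrqD}.

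First I would reduce to non-jump starting points. For $x \in (b,r)$ in the strong case ($b<\beta=r$) or $x \in [\beta,r)$ in the mild case ($b=\beta<r$), the control prescribes an initial jump to $b$ (respectively $\beta-\delta$) with cost $x-b$ (respectively $x-\beta+\delta$), so the strong Markov property gives
\begin{align*}
w(x;q,u_\beta,\mathcal{S},\delta) &= (x-b) + w(b;q,u_\beta,\mathcal{S},\delta), \qquad x\in(b,r),\; b<\beta=r, \\
w(x;q,u_\beta,\mathcal{S},\delta) &= (x-\beta) + \delta + w(\beta-\delta;q,u_\beta,\mathcal{S},\delta), \qquad x\in[\beta,r),\; b=\beta<r.
\end{align*}
Combined with $v(x;q)=x-b+v(b;q)$ from \eqref{BVPab} and the $C^1$-continuity of $v$ from \eqref{BVPad}, any identity $v=w$ established for $x \in (l,b]$ in the strong case, respectively $x \in (l,\beta)$ in the mild case, transfers to all $x\in(l,r)$ -- in the mild case upon sending $\delta\downarrow 0$ -- thereby yielding $w(x;q,u_\beta,\mathcal{S})$ in \eqref{wrqD}.

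For such a non-jump starting point $x$, I would introduce the localising sequence $\tau_n := n \wedge \inf\{t\geq 0 : X^{u_\beta,\mathcal{S},\delta}_t \leq l+1/n\}$, intersected with $\inf\{t\geq 0 : X^{u_\beta,\mathcal{S},\delta}_t \geq \beta-1/n\}$ in the mild case. The entrance-not-exit property of $\beta$ (Lemma \ref{scale_properties_lemma}) ensures $\tau_n \uparrow \tau^{u_\beta,\mathcal{S},\delta}_l$ $\mathbb{P}$-a.s. Since $v(\cdot;q)$ is $C^1$ on $(l,r)$ and $C^2$ off the finite-on-compacts set $\mathcal{J}_u\cup\{b\}$ with locally bounded second derivatives, an extended It\^o--Tanaka formula, incorporating the local time at $b$ when $b<\beta$, applies on $[0,\tau_n]$. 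Using the SDE \eqref{Xbim}, the PDE \eqref{BVPaa} on $\mathcal{W}_u \cup \mathcal{M}_u$, and $v'(b;q)=1$ (from differentiating \eqref{BVPab} at $b+$ combined with the $C^1$-pasting from \eqref{BVPad}), the drift integrand collapses to $-(f+u_\beta)(X_s)$ and the local-time term to $-e^{-qs}\,dL^b_s$. Taking expectations -- the $dW$-integral being a true martingale on $[0,\tau_n]$ -- yields
\begin{align*}
v(x;q) = \E_x\Big[\int_0^{\tau_n} \hspace{-1mm} e^{-qs}(f+u_\beta)(X^{u_\beta,\mathcal{S},\delta}_s)\,ds + \int_0^{\tau_n} \hspace{-1mm} e^{-qs}\,dL^b_s(X^{u_\beta,\mathcal{S},\delta})\mathbf{1}_{\{b<\beta\}} + e^{-q\tau_n}v\bigl(X^{u_\beta,\mathcal{S},\delta}_{\tau_n};q\bigr)\Big].
\end{align*}
Sending $n\to\infty$, the first two terms converge by monotone convergence to the corresponding integrals over $[0,\tau^{u_\beta,\mathcal{S},\delta}_l]$, and on $\{\tau^{u_\beta,\mathcal{S},\delta}_l<\infty\}$ the boundary condition \eqref{BVPac} together with continuity of $v$ at $l$ delivers $e^{-q\tau^{u_\beta,\mathcal{S},\delta}_l}q^{-1}f(l)$, which reproduces exactly the expression in \eqref{wrd}.

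The main obstacle lies in controlling $e^{-q\tau_n}v(X^{u_\beta,\mathcal{S},\delta}_{\tau_n};q)$ on $\{\tau^{u_\beta,\mathcal{S},\delta}_l=\infty\}$, and particularly in ruling out a positive contribution from the upper cut-off at $\beta-1/n$ in the mild case where $u_\beta$ explodes. This is precisely where Lemma \ref{scale_properties_lemma} is essential: the entrance-not-exit property prevents accumulation near $\beta$ in finite time, and combined with finiteness of $w(x;q,u_\beta,\mathcal{S},\delta)$ it supplies the domination needed to carry out the limit passage. A secondary -- standard -- point is the validity of the It\^o--Tanaka formula for $v$ with only $C^1$-regularity and finitely many jumps of $v''$ on compacts, handled by mollification.
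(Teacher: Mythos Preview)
Your proposal is correct and follows essentially the same route as the paper: It\^o--Tanaka along the localised process, use the PDE \eqref{BVPaa} and $v'(b;q)=1$ to collapse the drift, monotone convergence for the running integrals, and then handle $x\in\mathcal{S}$ by the initial-jump reduction and \eqref{BVPab}.

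The one place you make your life harder than necessary is the ``main obstacle'' paragraph. The paper dispatches the terminal term $e^{-q\tau_n}v(X_{\tau_n};q)$ in one line: since $v(\cdot;q)$ is continuous on $(l,r)$ with $v(l+;q)=q^{-1}f(l)$, and the stopped process satisfies $l\leq X_{\tau_n}\leq b$ a.s., the sequence $v(X_{\tau_n};q)$ is \emph{uniformly bounded} by $\sup_{[l,b]}|v|<\infty$. Hence on $\{\tau_l=\infty\}$ one has $e^{-q\tau_n}\to 0$ against a bounded factor, and on $\{\tau_l<\infty\}$ one has $X_{\tau_n}\to l$ with $v(l+;q)=q^{-1}f(l)$; dominated convergence finishes it. No appeal to the finiteness of $w(x;q,u_\beta,\mathcal{S},\delta)$ is needed---and you should avoid invoking it, since the lemma is precisely what establishes that $w$ exists and is finite, so using it there would be circular.
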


\begin{proof} 
Fix a strong action region $\mathcal{S}=[b,r)$ for some $b \in (l,r)$. 
We begin by noticing that the time-consistent cost criterion $w(x;q,D)$ in \eqref{wr} associated with the control strategy $(u_\beta,\mathcal{S},\delta)$, takes the form in \eqref{wrd}, which is involved in the definition \eqref{wrqD} of the limiting cost criterion $w(\cdot;q,u,\beta)$. 
It is clear from the latter definition that $w(x;q, u_\beta, \mathcal{S}) =
w(x;q, u_\beta, \mathcal{S},\delta)$ unless $x \in [b,r) \equiv [\beta,r)$ in the case when $b=\beta<r$, which is the only occasion where $\delta$ becomes relevant. 
We use this observation in all three steps of the proof:

\vspace{1mm}
{\it Step 1: The case $x=l$.} 
In this case $\tau^D_l \equiv \tau^{u_\beta,\mathcal{S},\delta}_l = 0$ in \eqref{tau}, 
and it is straightforwardly seen that $w(l;q,u,\beta) = q^{-1}f(l) = v(l;q)$. 

\vspace{1mm}
{\it Step 2: The case $x\in (l,b)$.} 
Note that this covers both the case when $b=\beta<r$ and the boundary point $b=\beta$ is inaccessible for $X^{u_\beta,\mathcal{S},\delta}$, hence its state-space identifies with the interval $(l,\beta)$, and the case when $b<\beta= r$ and $b$ is a reflecting boundary for $X^{u_\beta,\mathcal{S},\delta}$.   
Consider now the sequence of stopping times 
$$
\eta_k = \eta_k^{u_\beta,\mathcal{S},\delta} 
:= \inf \big\{t\geq 0: X^{u_\beta,\mathcal{S},\delta}_t \not\in (l+1/k, \beta-1/k) \big\} \wedge \inf \big\{ t\geq 0: X^{u_\beta,\mathcal{S},\delta}_t \not\in (-k,k) \big\} \wedge k, \quad k\in\N.
$$ 
In view of \eqref{BVPad}, we can use the It\^{o}-Tanaka formula for continuous semimartingales and the occupation times formula (see e.g.~\cite[Chapter VI, Exercise 1.25, Corollary 1.6]{revuz2013continuous}) to calculate 
\begin{align} \label{FBa0}
v(X^{u_\beta,\mathcal{S},\delta}_{\eta_k}; q) 
&= v(x;q) + \int_0^{\eta_k } v'(X^{u_\beta,\mathcal{S},\delta}_{t};q) dX^{u_\beta,\mathcal{S},\delta}_t + \frac12 \int_0^{\infty} L^y_{\eta_k} v''(dy;q) \notag\\
&= v(x;q) + \int_0^{\eta_k } \textbf{A} v(X^{u_\beta,\mathcal{S},\delta}_t;q) {\bf 1}_{\{X^{u_\beta,\mathcal{S},\delta}_t\not\in \mathcal{J}_u\cup \partial\mathcal{S}\}} dt + \int_0^{\eta_k} v'(X^{u_\beta,\mathcal{S},\delta}_{t};q)\sigma(X^{u_\beta,\mathcal{S},\delta}_t) dW_t \notag\\
&\quad - \int_0^{\eta_k} v'(X^{u_\beta,\mathcal{S},\delta}_{t};q) dD^{u_\beta,\mathcal{S},\delta}_t ,
\end{align}
where $v''(dy;q)$ is the measure that identifies with the second distributional derivative of $v(\cdot;q)$. 
Then, by using the structure of the strong action region $\mathcal{S}=[b,r)$ in Definition \ref{def:mixedm} and the expression \eqref{eq:D-forMS} of $D^{u_\beta,\mathcal{S},\delta}$ we observe that 
$$
D^{u_\beta,\mathcal{S},\delta}_t = \int_{0}^{t} u_\beta(X_s^{u_\beta,\mathcal{S},\delta}) ds + L^{b}_t(X^{u_\beta,\mathcal{S},\delta}) \mathbf{1}_{\{b < \beta = r\}}, \quad 0 \leq t \leq \eta_k. 
$$
Substituting this into \eqref{FBa0} and using the integration by parts formula as well as taking expectations on both sides we obtain 
\begin{align*} 
&\E_x\left[e^{- q \eta_k} v(X^{u_\beta,\mathcal{S},\delta}_{\eta_k};q) \right] - v(x;q) \notag\\
&= \E_x\bigg[\int_0^{\eta_k } e^{-qt} \left( (\textbf{A} - q) v(X^{u_\beta,\mathcal{S},\delta}_t;q) {\bf 1}_{\{X^{u_\beta,\mathcal{S},\delta}_t\not\in \mathcal{J}_u\cup \partial\mathcal{S}\}}  - u_\beta(X^{u_\beta,\mathcal{S},\delta}_t) v'(X^{u_\beta,\mathcal{S},\delta}_{t};q) \right) dt \notag\\
&\qquad \quad - \mathbf{1}_{\{b < \beta = r\}}  \int_0^{\eta_k} e^{-qt} v'(X^{u_\beta,\mathcal{S},\delta}_{t};q) dL^{b}_t(X^{u_\beta,\mathcal{S},\delta})  \bigg],
\end{align*}
due to continuity of $\sigma$ and $v'$ (which imply that the integrand of the stochastic integral is bounded on the time interval $[0,\eta_k]$). 
Combining the above together with \eqref{BVPaa} and using the fact that the set $\{t \geq 0 :  X^{u_\beta,\mathcal{S},\delta}_t \in \mathcal{J}_u\cup \partial\mathcal{S}\}$ has Lebesgue measure zero, yields
\begin{align} \label{FBa}
&\E_x\left[e^{- q \eta_k} v(X^{u_\beta,\mathcal{S},\delta}_{\eta_k};q) \right] - v(x;q) \notag\\
&= \E_x\bigg[- \int_0^{\eta_k } e^{-qt}  \left( f(X^{u_\beta,\mathcal{S},\delta}_t;q) + u_\beta(X^{u_\beta,\mathcal{S},\delta}_t) \right) dt - \mathbf{1}_{\{b < \beta = r\}} \int_0^{\eta_k} e^{-qt} dL^{b}_t(X^{u_\beta,\mathcal{S},\delta}) \bigg]
\end{align}
where we also used the fact that the local time $L^{b}(X^{u_\beta,\mathcal{S},\delta})$ increases only when $X^{u_\beta,\mathcal{S},\delta} = b$ together with $v'(b;q) = 1$ due to \eqref{BVPab} and \eqref{BVPad}. 

Given that $b$ is inaccessible for $X^{u_\beta,\mathcal{S},\delta}$ when $b=\beta<r$ and that $X^{u_\beta,\mathcal{S},\delta}_t \in [l,b]$ for all $t>0$ when $b<\beta=r$ (so that $\beta$ cannot be reached in finite time), we have $\eta_k \rightarrow \tau^{u_\beta,\mathcal{S},\delta}_l \leq +\infty$ as $n\rightarrow\infty$. 
Using the continuity of $v(\cdot;q)$ and the observation that $l\leq X^{u_\beta,\mathcal{S},\delta}_{\eta_k}\leq b<r$, $\mathbb{P}_x$-a.s., together with the property $v(l+;q) \geq 0$ due to \eqref{BVPac} and the non-negativity of $f$, we see that $v(X^{u_\beta,\mathcal{S},\delta}_{\eta_k};q)$ is uniformly bounded in $k$ , so that
$$
\lim_{k \to \infty} \E_x\Big[e^{- q \eta_k} v(X^{u_\beta,\mathcal{S},\delta}_{\eta_k};q) \Big] = \E_x\Big[e^{- q \tau^{u_\beta,\mathcal{S},\delta}_l} q^{-1} f(l) \Big].
$$
Using this limit together with the non-negativity of $f$ and $u$ and the monotone convergence theorem, we conclude by sending $k \rightarrow \infty$ in \eqref{FBa}, that
\begin{align} \label{eq:vxq}
\begin{split}
v(x;q) &= \E_x\bigg[\int_0^{\tau^{u_\beta,\mathcal{S},\delta}_l} e^{-qt} \left( f(X^{u_\beta,\mathcal{S},\delta}_t;q) + u_\beta(X^{u_\beta,\mathcal{S},\delta}_t) \right) dt + \mathbf{1}_{\{b < \beta = r\}} \int_0^{\tau^{u_\beta,\mathcal{S},\delta}_l} \hspace{-3mm}e^{-qt} dL^{b}_t(X^{u_\beta,\mathcal{S},\delta}) \\
&\qquad \quad + e^{- q \tau^{u_\beta,\mathcal{S},\delta}_l} q^{-1} f(l) \bigg]
= w(x;q, u_\beta,\mathcal{S}),
\end{split}
\end{align}
where the last equation follows from \eqref{wrd} with $x \in (l,b)$ and $\mathcal{S}=[b,r)$. 

\vspace{1mm}
{\it Step 3: The case $x\in [b,r)$ when $b<\beta=r$.} 
We firstly notice that in the case when $b<\beta=r$, the expectation derived to be equal to $v(x;q)$ in \eqref{eq:vxq} of Step 2 when $x\in(l,b)$ is well-defined also for $x=b$. 
Then, we realise from \eqref{wrd} with $x = b$ and $\mathcal{S}=[b,r)$ that  
$v(b;q) = w(b;q,u_\beta,\mathcal{S})$.
It thus follows directly from \eqref{BVPab} that
\begin{align*}
v(x;q) 
= v(b;q) + x - b 
= w(b;q,u_\beta,\mathcal{S}) 
+ x - b
= w(x;q,u_\beta,\mathcal{S}) ,
\end{align*}
where the latter equality is due to the expression \eqref{wrd} of $w(x;q,u_\beta,\mathcal{S},\delta)$ with $x \in \mathcal{S}=[b,r)$.

\vspace{1mm}
{\it Step 4: The case $x\in [b,r)$ when $b=\beta < r$.} 
We have by the definition \eqref{wrqD} of $w(x;q,u_\beta,\mathcal{S})$ for $x\in [\beta,r)$, the expression \eqref{wrd} of $w(x;q,u_\beta,\mathcal{S},\delta)$ with $x \in \mathcal{S}=[\beta,r)$,
and the result of Step 2 that
\begin{align*}
w(x;q,u_\beta,\mathcal{S}) 
= \lim_{\delta\downarrow 0} w\big(x;q,u_\beta,\mathcal{S},\delta\big)
&= \lim_{\delta\downarrow 0} \Big\{ w\big(\beta-\delta; q, u_\beta,\mathcal{S},\delta\big) + x - \beta + \delta \Big\}\\
&=\lim_{\delta\downarrow 0} \Big\{ v(\beta-\delta;q) + x - \beta + \delta \Big\} =v(\beta;q) + x - \beta 
= v(x;q),
\end{align*}
where we used the continuity of $v(\cdot;q)$ in the penultimate equality and \eqref{BVPab} in the last one.
\end{proof}

Building on the results in Lemma \ref{useful-lemma}, we also present the following important technical result on the existence and finiteness of the limiting cost criterion $J(\cdot;u_\beta,[b,r))$ defined by \eqref{eq:limiting-cost-cr}, which also leads to the admissibility of the generalised threshold control strategy $(u_\beta,[b,r),\delta)$.  

\begin{lemma} \label{useful-lemma-2} 
Consider a strong action region $\mathcal{S}=[b,r)$ and a generalised threshold control strategy $(u_\beta,\mathcal{S},\delta)$ satisfying Definition \ref{def:mixedm} with either $l<b<\beta=r$ or $l<b=\beta<r$, such that the associated SDE (cf.~\eqref{Xbim}) has a unique strong solution $X^{u_\beta,\mathcal{S},\delta}$ for all starting values $x \in (l,r)$. 
Suppose that the BVP in Lemma \ref{useful-lemma} has a solution $v(x;q)$ for each $q \in {\rm supp}(F)$, and that there exists a locally bounded function $x \mapsto C(x,q)$ such that for all $x\in (l,r)$ and $q\in {\rm supp}(F)$ we have
\begin{align}\label{hlq3hjl} 
\hspace{-1mm}(1+q)|v''(x;q)|  {\bf 1}_{\{x\not\in \mathcal{J}_u\cup \partial\mathcal{S}\}} 
+ (1+q)|v'(x;q)| 
+ (1+q+q^2)|v(x;q)| 
\leq C(x,q) 
, \;\; 
\int_0^\infty \hspace{-2mm}C(x,q)dF(q)<\infty.
\end{align}
Then the limiting cost criterion $J(\cdot;u_\beta,\mathcal{S})$ defined by \eqref{eq:limiting-cost-cr} exists, is given by  
\begin{align}\label{J=J} 
J(x;u_\beta,\mathcal{S}) 
= V(x) :=\int_0^\infty v(x;q)dF(q), 
\quad x \in [l,r),
\end{align} 
and satisfies 
\begin{align}
&f(x) + \mathbf{A} V(x) - \textstyle{\int_{0}^{\infty}} q v(x;q) dF(q) - u_\beta(x) V'(x) + u_\beta(x) = 0, \quad x \in (\mathcal{W}_u \cup \mathcal{M}_u) \setminus \mathcal{J}_u, 
\label{JBVPa}\\ 
&V(x) = x - b + V(b), \qquad \qquad \qquad \qquad \qquad \qquad \qquad \qquad \qquad x \in \mathcal{S}=[b,r), 
\label{JBVPb}\\
&V(l) = f(l) \textstyle{\int_0^\infty} q^{-1} dF(q), 
\label{JBVPc}\\
& \text{$V(\cdot)\in \mathcal{C}^2\big( (l,r) \setminus (\mathcal{J}_u\cup \partial\mathcal{S}) \big) \cap \mathcal{C}^1(l,r)$ with $|V''(x\pm)|<\infty$ for all $x\in (l,r)$.
}\label{JBVPd}
\end{align}
In particular, $(u_\beta,\mathcal{S},\delta) \equiv (u_\beta,[b,r),\delta) \in \mathcal{A}$, i.e.~it is an admissible control strategy.
\end{lemma}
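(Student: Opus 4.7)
The plan is to lift Lemma \ref{useful-lemma} (which identifies $w(\cdot;q,u_\beta,\mathcal{S}) = v(\cdot;q)$ for each fixed $q$) to the weighted criterion $J(\cdot;u_\beta,\mathcal{S})$ by integrating against $dF(q)$, using the domination in \eqref{hlq3hjl} to justify every exchange of limit/derivative with integral. I would organize the argument into (a) establishing the identity $J = V$, and (b) deriving the BVP \eqref{JBVPa}--\eqref{JBVPd}, with admissibility following as a direct consequence.

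For (a), on $x\in(l,\beta)$ the definition \eqref{eq:limiting-cost-cr} gives immediately $J(x;u_\beta,\mathcal{S}) = \int_0^\infty w(x;q,u_\beta,\mathcal{S})\,dF(q) = \int_0^\infty v(x;q)\,dF(q) = V(x)$, finite by \eqref{hlq3hjl}. For $x\in[\beta,r)$ I would invoke the identity derived in Step~4 of the proof of Lemma \ref{useful-lemma}, namely $w(x;q,u_\beta,\mathcal{S},\delta) = v(\beta-\delta;q) + x - \beta + \delta$, and apply dominated convergence as $\delta\downarrow 0$: local boundedness of $C(\cdot,q)$ near $\beta$ supplies the $\delta$-uniform dominating function $\sup_{\delta\in[0,\delta_0]}|v(\beta-\delta;q)| \le \sup_{y\in[\beta-\delta_0,\beta]} C(y,q)$, which is integrable in $q$ by \eqref{hlq3hjl}, while continuity of $v(\cdot;q)$ at $\beta$ provides the pointwise limit. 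This yields $J(x;u_\beta,\mathcal{S}) = V(\beta) + x - \beta$, already in the form dictated by \eqref{JBVPb}.

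For (b), the bounds in \eqref{hlq3hjl} dominate each of $|v(x;q)|,|v'(x;q)|,|v''(x;q)|$ by $C(x,q)$ and additionally dominate $q\,|v(x;q)|$ (since $q/(1+q+q^2)\le 1$), all integrable in $q$; this lets me differentiate twice under the $\int\!\cdot\,dF(q)$ sign on $(l,r)\setminus(\mathcal{J}_u\cup\partial\mathcal{S})$ and once on $(l,r)$, so that $V'(x) = \int v'(x;q)\,dF(q)$ and $V''(x\pm) = \int v''(x\pm;q)\,dF(q)$, with the one-sided continuity and finiteness in \eqref{JBVPd} inherited from the corresponding properties of $v(\cdot;q)$. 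Integrating \eqref{BVPab} and \eqref{BVPac} against $dF(q)$, together with Assumption \ref{assum:h} ($\int q^{-1}\,dF(q)<\infty$), gives \eqref{JBVPb} and \eqref{JBVPc}. Integrating the ODE \eqref{BVPaa}, using $\int dF = 1$, and pulling $\mathbf{A}$ and the factor $q$ inside the integral (justified by the same domination) delivers
\[
f(x) + \mathbf{A} V(x) - \int_0^\infty q\,v(x;q)\,dF(q) - u_\beta(x)V'(x) + u_\beta(x) = 0,
\]
which is \eqref{JBVPa}. Admissibility $(u_\beta,\mathcal{S},\delta)\in\mathcal{A}$ then follows from the existence-uniqueness hypothesis on the SDE combined with the already-established finiteness of \eqref{wrqD}--\eqref{eq:limiting-cost-cr}.

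The principal technical obstacle is the double appeal to dominated convergence: the $\delta\downarrow 0$ passage on $[\beta,r)$ requires domination that is \emph{joint} in $(\delta,q)$, which the local boundedness of $x\mapsto C(x,q)$ supplies; and the move of the second-order operator $\mathbf{A}$ and of the factor $q$ through the $dF$-integral is precisely what the $q$-weighted bounds in \eqref{hlq3hjl} are designed to permit. Once these two exchanges are in hand, the remainder reduces to a routine integration of the BVP for $v(\cdot;q)$ against $dF(q)$.
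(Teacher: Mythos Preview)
Your proposal is correct and follows essentially the same route as the paper's proof: identify $J=V$ via Lemma~\ref{useful-lemma} and a dominated-convergence swap of $\lim_{\delta\downarrow 0}$ with $\int dF(q)$ on $[\beta,r)$ (using the identity $w(x;q,u_\beta,\mathcal{S},\delta)=v(\beta-\delta;q)+x-\beta+\delta$), then integrate the BVP \eqref{BVPaa}--\eqref{BVPad} against $dF(q)$, justifying differentiation under the integral sign by the bounds in \eqref{hlq3hjl}. If anything, you spell out the two dominated-convergence applications more explicitly than the paper does (cf.\ Remark~\ref{asdqwrr}).
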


\begin{proof}
By the definition \eqref{J=J} of $V$ and the result \eqref{w=w} in Lemma \ref{useful-lemma} for the solution $v$ of the BVP, we obtain 
\begin{align} \label{Vx1}
\begin{split}
V(x) 
=\int_0^\infty v(x;q)dF(q)
&=\int_0^\infty w(x;q,u_\beta,\mathcal{S}) dF(q) \\
& = \begin{cases} 
\int_0^\infty w(x;q,u_\beta,\mathcal{S}) dF(q), & \text{for } x\in(l,\beta), \\
\int_0^\infty \lim_{\delta\to 0} w(x;q,u_\beta,\mathcal{S},\delta) dF(q), & \text{for } x \geq \beta,
\end{cases}
\end{split}
\end{align} 
where we used the definition \eqref{wrqD} of $w(\cdot;q,u_\beta,\mathcal{S})$ in the last equality. 
Then, we notice that for any $x\geq \beta$, we have  
\begin{equation} \label{lim=lim}
\int_0^\infty \lim_{\delta\to 0} w(x;q,u_\beta,\mathcal{S},\delta) dF(q) = 
\lim_{\delta\to 0} \int_0^\infty w(x;q,u_\beta,\mathcal{S},\delta) dF(q)
\end{equation}
thanks to the structure of the generalised threshold control strategy $(u_\beta,\mathcal{S},\delta)$ for any $x\geq \beta$ yielding  
$$
w(x;q,u_\beta,\mathcal{S},\delta) 
= w\big(\beta-\delta; q, u_\beta,\mathcal{S},\delta\big) + x - \beta + \delta 
= v(\beta-\delta;q) + x - \beta + \delta,
$$ 
which is then combined with the conditions $|v(x;q)| \leq C(x,q)$ and $\int_0^\infty C(x,q)dF(q)<\infty$ (cf.~\eqref{hlq3hjl}),
and the dominated convergence theorem. 
Using the equality \eqref{lim=lim} in \eqref{Vx1} we obtain  
\begin{align*}
V(x) 
&= \left. \begin{cases} 
\int_0^\infty w(x;q,u_\beta,\mathcal{S}) dF(q), & \text{for } x\in(l,\beta), \\
\lim_{\delta\to 0} \int_0^\infty w(x;q,u_\beta,\mathcal{S},\delta) dF(q), & \text{for } x \geq \beta
\end{cases} \right\}
= J(x;u_\beta, \mathcal{S}),
\end{align*} 
where the latter equality is a direct consequence of the definition \eqref{eq:limiting-cost-cr} of $J(\cdot; u_\beta, \mathcal{S})$.

Finally, given that $v(\cdot,q)$ satisfies \eqref{BVPaa}--\eqref{BVPad} for each $q\in  {\rm supp}(F)$, it follows from the conditions (cf.~\eqref{hlq3hjl})
\begin{align*}
|v''(x;q)| {\bf 1}_{\{x\not\in \mathcal{J}_u\cup \partial\mathcal{S}\}} +|v'(x;q)|+ (1+q)|v(x;q)| \leq C(x,q) 
\quad \text{and} \quad 
\int_0^\infty C(x,q)dF(q)<\infty,
\end{align*}
and the dominated convergence theorem (see also Remark \ref{asdqwrr}) that $V$ satisfies the system \eqref{JBVPa}--\eqref{JBVPd}.
\end{proof}

\begin{remark}\label{asdqwrr}
A close inspection of the proof of Lemma \ref{useful-lemma-2} would reveal that the condition \eqref{hlq3hjl} is stronger than it needs to be for the validity of this lemma. 
The reason we use this integrability condition is because of its usefulness later on in the proof of the verification theorem. 
The condition \eqref{hlq3hjl} is particularly useful for changing the order of differentiation and integration for $V(x)=\int_0^\infty v(x;q)dF(q)$, in the sense that $V'(x)=\int_0^\infty v'(x,q)dF(q)$ and $V''(x)=\int_0^\infty v''(x,q)dF(q)$. 
\end{remark}

We are now ready to present the main result of this section, which is the following verification theorem, providing necessary and sufficient conditions for the optimality of equilibria given by generalised threshold control strategies $(u_\beta,[b,r),\delta)$ satisfying Definition \ref{def:mixedm}.

\begin{theorem}[Verification] \label{main_thm}
Consider a strong action region $\mathcal{S}^*=[b^*,r)$ and a generalised threshold control strategy $(u_\beta^*,\mathcal{S}^*,\delta^*)$ satisfying Definition \ref{def:mixedm} with either $l<b^*<\beta^*=r$ (reflecting boundary $b^*$), or $l<b^*=\beta^*<r$ (inaccessible boundary $b^*=\beta^*$), such that the associated SDE (cf.~\eqref{Xbim}) has a unique strong solution $X^{u_\beta^*,\mathcal{S}^*,\delta^*}$ for all starting values $x \in (l,r)$. 
Suppose that 
the associated BVP in Lemma \ref{useful-lemma} has a solution $v(\cdot;q)$ for each $q \in {\rm supp}(F)$ and the conditions of the associated Lemma \ref{useful-lemma-2} hold true, 
so that the corresponding time-inconsistent limiting cost criterion \eqref{eq:limiting-cost-cr} is given by $J(\cdot;u_\beta^*,\mathcal{S}^*) = V(\cdot)$ defined by \eqref{J=J}.  Then the pair $(u_\beta^*, [b^*,r))$ is an equilibrium strategy if and only if 
\begin{align}
&V'(x) \leq 1, \qquad \qquad \qquad \qquad \qquad \qquad  x \in {\rm int} (\mathcal{W}_{u^*}), \label{mainthmcond1} \tag{I}\\
&V'(x) = 1, \qquad \qquad \qquad \qquad \qquad \qquad  x \in \ol{\mathcal{M}_{u^*}}\setminus \{l,b^*\}, \label{mainthmcond2} \tag{II}\\
&f(x) + \mu(x) - \textstyle{\int_{0}^{\infty}} q v(x;q) dF(q) \geq 0, 
\quad x \in \mathcal{S}=[b^*,r). \label{mainthmcond3} \tag{III}
\end{align}
\end{theorem}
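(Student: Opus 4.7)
My strategy is to prove both implications of the equivalence by translating the equilibrium conditions in Definition \ref{def:equ_stop_time} through the BVP characterisation of $V$ from Lemma \ref{useful-lemma-2}, which in particular gives $V \in \mathcal{C}^1(l,r)$ with $V' \equiv 1$ on $[b^*,r)$ via \eqref{JBVPb}. The central computation, performed once and reused in both directions, is the following It\^o-Dynkin identity: for $x \in (l,\beta) \cap (\mathcal{S}^c \cup \ul{\partial \mathcal{S}})$ and a deviation $(u_\beta,\mathcal{S},\delta) \in \mathcal{A}_x$, for $\epsilon>0$ small enough the deviation dynamics near $x$ reduce to $dX_t=(\mu-u_\beta)(X_t)\,dt+\sigma(X_t)\,dW_t$ up to $\theta:=\theta^{u_\beta,\mathcal{S},\delta}_{x,\epsilon}$. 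Applying the It\^o--Tanaka formula to $e^{-qt}v(X_t;q)$ (permitted by \eqref{BVPad}) and inserting the equilibrium continuation $w(X_\theta;q,u^*,\mathcal{S}^*)=v(X_\theta;q)$, the difference between $v(x;q)$ and the $q$-component of the deviation value equals
\begin{align*}
-\mathbb{E}_x\int_0^\theta e^{-qs}\bigl\{f(X_s)+(\mathbf{A}-q)v(X_s;q) - u_\beta(X_s)(v'(X_s;q)-1)\bigr\}ds.
\end{align*}
On $(l,\beta^*)$ I use \eqref{BVPaa} to rewrite the braced expression as $(u_\beta^*-u_\beta)(v'-1)$; on $[b^*,r)$ the linear form of $v$ collapses it to $f+\mu-qv$. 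Integrating against $dF(q)$ (Fubini is justified by the majorant \eqref{hlq3hjl}), dividing by $\mathbb{E}_x[\theta]$, and passing to $\epsilon \downarrow 0$ by dominated convergence and Remark \ref{asdqwrr} yields the pointwise limit
\begin{align*}
L(x)=\begin{cases}(u_\beta(x)-u_\beta^*(x))(V'(x)-1), & x\in(l,\beta^*),\\ -\bigl(f(x)+\mu(x)-\textstyle\int_0^\infty q v(x;q)\,dF(q)\bigr), & x\in[b^*,r).\end{cases}
\end{align*}

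\emph{Sufficiency.} Given (I)--(III), the jump condition \eqref{EQ2} follows directly: (I), (II) and $V'\equiv 1$ on $[b^*,r)$ together give $V'\leq 1$ on $(l,r)$, hence $V(x)-V(y)\leq x-y$ for $l<y\leq x<r$, which is exactly the inequality needed on the right-hand side of \eqref{EQ2} (both for the summand at $b_i$ and the additional jump term at $\beta-\delta$). For the limsup condition \eqref{EQ1}, I apply the formula for $L(x)$: when $u_\beta^*(x)=0$, nonnegativity of $u_\beta$ combined with (I) gives $L(x)\leq 0$; when $u_\beta^*(x)>0$, (II) forces $V'(x)=1$ so $L(x)=0$; and on $[b^*,r)$, (III) makes $L(x)\leq 0$.

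\emph{Necessity and main obstacle.} To extract (I)--(III) from the equilibrium property, I test $L(x)\leq 0$ against tailored deviations with $\beta=r$ and $\mathcal{S}=\emptyset$. For $x \in \mathcal{W}_{u^*}$, taking a locally constant rate $u_\beta(x)=c>0$ yields $c(V'(x)-1)\leq 0$, i.e.\ (I). For $x \in \mathcal{M}_{u^*}$, deviations with $u_\beta(x)=u_\beta^*(x)\pm c$ for small $c>0$ realise both signs of $(u_\beta(x)-u_\beta^*(x))$ and force $V'(x)=1$; continuity of $V'$ extends this to $\ol{\mathcal{M}_{u^*}}\setminus\{l,b^*\}$, yielding (II). For $x\in(b^*,r)$, the deviation $u_\beta\equiv 0$ near $x$ produces $-(f(x)+\mu(x)-\int qv(x;q)dF(q))\leq 0$, extended to $x=b^*$ by continuity, giving (III). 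The principal technical hurdle is the rigorous passage to the limit $\epsilon \downarrow 0$ in the quotient defining $L(x)$: this requires uniform pathwise control of $X^{u_\beta,\mathcal{S},\delta}_t$ on $[0,\theta_{x,\epsilon}]$ in terms of $\epsilon$; justified interchange of $\limsup$ with the $dF(q)$-integral via \eqref{hlq3hjl}; and careful application of It\^o--Tanaka across the non-$\mathcal{C}^2$ points in $\mathcal{J}_{u^*}\cup\partial\mathcal{S}^*$, using that the Lipschitz property of $\sigma$ together with $\sigma>0$ makes the occupation time of these Lebesgue-null sets vanish. Additional care is needed at the interface $x=b^*$, particularly in the mild threshold case $b^*=\beta^*<r$, where $u^*$ is unbounded near $\beta^*$ and the continuation of the deviation into $[b^*,r)$ must be reconciled with the linear form of $v$ there.
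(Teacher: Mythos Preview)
Your overall strategy matches the paper's proof closely: derive a pointwise formula for $L(x)$ via It\^o--Tanaka and the BVP, then use it in both directions. However, there are two technical points where your sketch diverges from what is actually needed.

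\textbf{Reflection deviations are missing in sufficiency.} You write that ``for $\epsilon>0$ small enough the deviation dynamics near $x$ reduce to $dX_t=(\mu-u_\beta)(X_t)\,dt+\sigma(X_t)\,dW_t$''. This is only true when $x\in\mathcal{S}^c$ for the \emph{deviation}. The equilibrium condition \eqref{EQ1} must be checked for all $(u_\beta,\mathcal{S},\delta)\in\mathcal{A}_x$ with $x\in(l,\beta)\cap(\mathcal{S}^c\cup\ul{\partial\mathcal{S}})$, and the case $x\in\ul{\partial\mathcal{S}}$ means the deviation reflects at $x$, so $D$ carries a local-time term $L^x_t(X)$ (cf.\ \eqref{D_form_proof} in the paper). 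Your exact formula $L(x)=(u_\beta(x)-u_\beta^*(x))(V'(x)-1)$ therefore does not cover this class. The paper handles this by \emph{not} computing $L(x)$ exactly in the sufficiency step: it keeps the full $(V'(X_t)-1)\,dD_t$ term (Proposition \ref{splitL}), observes that $V'\leq 1$ globally under (I)--(II) and \eqref{JBVPb}, and hence bounds this term by zero regardless of whether $D$ is absolutely continuous or has a local-time part. The remaining $dt$-integral is then shown to vanish via \eqref{JBVPa}. Your argument can be repaired the same way, but as written it is incomplete.

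\textbf{The c\`adl\`ag correction.} Because $u_\beta$ and $u_\beta^*$ are only c\`adl\`ag, the pointwise limit of the time-average is not $(u_\beta(x)-u_\beta^*(x))(V'(x)-1)$ but the symmetrised expression
\[
L(x)=\tfrac12\bigl(u_\beta(x-)+u_\beta(x)-u_\beta^*(x-)-u_\beta^*(x)\bigr)(V'(x)-1),
\]
see Corollary \ref{lemma_bothmix}. This does not affect your sufficiency logic (on $\mathcal{M}_{u^*}$ the factor $V'(x)-1$ vanishes by (II); on ${\rm int}(\mathcal{W}_{u^*})$ both $u_\beta^*(x-)$ and $u_\beta^*(x)$ are zero), but it does matter for necessity. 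In particular, your test ``$u_\beta(x)=u_\beta^*(x)\pm c$'' is fragile: $u_\beta^*(x)-c$ may be negative (inadmissible), and at a jump of $u_\beta^*$ the symmetrised formula need not change sign for small $c$. The paper instead tests against globally constant $u_r\equiv u>0$, so that $2u-u_\beta^*(x-)-u_\beta^*(x)$ can be made of either sign on $\mathcal{M}_{u^*}$ by choosing $u$ small or large; this cleanly forces $V'(x)=1$.

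Apart from these two points, your treatment of \eqref{EQ2}, of the region $x\in(b^*,r)$ via Corollary \ref{lemma_payoutmix}, and of the limit interchange via \eqref{hlq3hjl} all align with the paper's argument.
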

The proof of Theorem \ref{main_thm} is presented in a series of results in Section \ref{main_thm:proof-section}. 

\begin{remark} 
Notice that if we have an equilibrium candidate $(u_\beta, [b,r))=(u_\beta, [\beta,r))$, then $l<b=\beta<r$ which corresponds to a (limiting) mild threshold control strategy, while if we have $(u_\beta, [b,r))=(u_r, [b,r))$, then $l<b<\beta=r$, $u_r(\cdot)$ does not explode, and $b$ is a reflection boundary for $X^{u_r, [b,r), 0}$. The latter case corresponds to a strong threshold control strategy when $u_r \equiv 0$.
\end{remark}

\begin{remark} 
We would like to stress the importance of the equivalence statement in the (Verification) Theorem \ref{main_thm} for applications. 
Essentially, given a proposed control strategy solving the BVP in Lemma \ref{useful-lemma} and satisfying the conditions of Lemma \ref{useful-lemma-2}, if all conditions \eqref{mainthmcond1}--\eqref{mainthmcond3} hold true, then it is an equilibrium strategy. 
If, on the contrary, we show that one of the conditions \eqref{mainthmcond1}--\eqref{mainthmcond3} does not hold true, then we can immediately conclude from Theorem \ref{main_thm} that the proposed control strategy is not an equilibrium strategy. 
This is particularly useful in proving that for some time-inconsistent problems the traditional strong threshold control strategy (Definition~\ref{def:strong}) employed in singular stochastic control problems is not optimal any more, which necessitates the search for equilibria of a different type. 
For further details in this direction refer to our case studies in Section \ref{sec:mixopt}, in particular Theorem \ref{thm:no-pure-NE}. 
\end{remark}

In the following result we show that an equilibrium strategy $(u_\beta, [b,r))$ yields a limiting time-inconsistent equilibrium cost $J(\cdot;u_\beta,[b,r))$ which is globally ${\cal C}^2$.
This regularity is not only useful in terms of applications of our theoretical results (see, e.g.~the case studies in Section \ref{sec:mixopt}), but also implies that the equilibrium cost $J(\cdot;u_\beta,[b,r))$ satisfies the so-called {\it smooth-fit principle} in stochastic singular control problems (in this case, $J''(b;u_\beta,[b,r))=0$) irrespective to whether the threshold $b$ is a strong one (reflecting) or a mild one (entrance-not-exit, thus inaccessible). 

\begin{corollary}\label{coq_beta}
Suppose that $(u_\beta,[b,r))$ is an equilibrium strategy according to Theorem \ref{main_thm}. Then, $V(\cdot) = J(\cdot;u_\beta,[b,r)) \in {\cal C}^2(l,r)$.
\end{corollary}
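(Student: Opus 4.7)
The plan is to leverage Lemma~\ref{useful-lemma-2}, which already gives $V \in \mathcal{C}^1(l,r)$ and $V \in \mathcal{C}^2\bigl((l,r) \setminus (\mathcal{J}_u \cup \{b\})\bigr)$, with finite one-sided limits $V''(x\pm)$ at every point of $(l,r)$. The task thus reduces to showing that these one-sided limits coincide at each $x_0 \in \mathcal{J}_u \cup \{b\}$. Throughout I would pass to one-sided limits in the ODE \eqref{JBVPa}, exploiting continuity of $f,\mu,\sigma,V,V'$, the c\`adl\`ag property of $u_\beta$, and continuity of $x \mapsto \int_0^\infty q\,v(x;q)\,dF(q)$ (which follows from the integrability estimate \eqref{hlq3hjl} via dominated convergence).

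For a discontinuity point $x_0 \in \mathcal{J}_u \setminus \{b\}$, subtracting the right and left limits of \eqref{JBVPa} at $x_0$ yields the jump identity
\[ \tfrac{1}{2}\sigma^2(x_0)\bigl[V''(x_0+) - V''(x_0-)\bigr] = \bigl[u_\beta(x_0) - u_\beta(x_0-)\bigr]\bigl(V'(x_0) - 1\bigr). \]
Because $u_\beta(x_0) \neq u_\beta(x_0-)$ and both are non-negative, at least one of them is strictly positive, placing $x_0$ in $\overline{\mathcal{M}_u} \setminus \{l,b\}$; condition \eqref{mainthmcond2} then pins $V'(x_0)=1$, so the jump in $V''$ vanishes and $V''$ is continuous at $x_0$.

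At the point $b$, formula \eqref{JBVPb} immediately delivers $V''(b+)=0$. In the mild-threshold case $b=\beta<r$, the explosion of $u_\beta$ at $\beta$ gives some $\varepsilon>0$ with $(\beta-\varepsilon,\beta) \subseteq \mathcal{M}_u$, on which \eqref{mainthmcond2} forces $V' \equiv 1$, whence $V''(b-)=0$. In the strong-threshold case $b<\beta=r$, I would combine \eqref{mainthmcond1}--\eqref{mainthmcond2} with continuity of $V'$ and the decomposition $(l,b) \subseteq \mathrm{int}(\mathcal{W}_u) \cup \overline{\mathcal{M}_u}$ to conclude $V' \leq 1$ on $(l,b)$, with equality $V'(b)=1$ by \eqref{JBVPb}; a mean value theorem argument on intervals $(x,b)$ avoiding the finitely many discontinuities of $V''$ then forces $V''(b-) \geq 0$. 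Passing to the limit $x \uparrow b$ in \eqref{JBVPa} and invoking the variational inequality \eqref{mainthmcond3} at $b$ delivers the opposite bound
\[ \tfrac{1}{2}\sigma^2(b)\,V''(b-) = -f(b) - \mu(b) + \int_0^\infty q\,v(b;q)\,dF(q) \leq 0, \]
so the two bounds sandwich $V''(b-)$ to $0$. The hard part is precisely this strong-threshold sub-case: there is no neighborhood of $b$ on which $V'\equiv 1$, so the $\mathcal{C}^2$-match cannot be read off directly and must instead be extracted as a sandwich between a geometric sign (maximum of $V'$ attained at $b$) and an analytic sign (the variational inequality at $b$). Combining all of the above shows that $V''$ is continuous on all of $(l,r)$, proving $V \in \mathcal{C}^2(l,r)$.
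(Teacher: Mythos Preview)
Your proof is correct and follows essentially the same approach as the paper's. Both arguments (i) subtract one-sided limits of the ODE \eqref{JBVPa} at points of $\mathcal{J}_u$ and kill the jump in $V''$ via $V'(x_0)=1$ from \eqref{mainthmcond2}, and (ii) at $b$ obtain $V''(b-)\leq 0$ from the ODE limit combined with \eqref{mainthmcond3}, and $V''(b-)\geq 0$ from the fact that $V'\leq 1$ attains its maximum $1$ at $b$ (the paper phrases this last step as a contradiction, you as a mean-value/monotonicity argument---logically equivalent). A minor presentational difference: you split the analysis at $b$ into mild versus strong threshold, while the paper splits into $b\in\partial\mathcal{M}_u$ versus $b\in\partial\mathcal{W}_u$; your strong case in fact handles both of the paper's sub-cases at once, since $u_\beta(x)(V'(x)-1)\to 0$ regardless of which region one approaches through.
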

\begin{proof}
We firstly note that given $(u_\beta,[b,r))$ is an equilibrium strategy as in Theorem \ref{main_thm}, we know that \eqref{mainthmcond1}--\eqref{mainthmcond3} hold true for $V$. 
By using the definition \eqref{eq:mild-string-u} of $\mathcal{W}_u$ in the system \eqref{JBVPa}--\eqref{JBVPd}, and the condition \eqref{mainthmcond2} which implies that $V''(x)=0$ for $x \in {\rm int}(\mathcal{M}_{u})$, we obtain that 
$V(\cdot)\in \mathcal{C}^2\big( (l,r)\setminus(\mathcal{J}_u\cup \partial\mathcal{S}) \big) \cap \mathcal{C}^1(l,r)$ and  
\begin{align} 
&f(x) + \mathbf{A} V(x) - \textstyle{\int_{0}^{\infty}} q v(x;q) dF(q) = 0, \quad x \in \mathcal{W}_u \setminus \mathcal{J}_u, \quad \text{where} \quad {\rm int}(\mathcal{W}_u) \subseteq \mathcal{W}_u \setminus \mathcal{J}_u, \label{JODE1}\\
&f(x) + \mu(x) - \textstyle{\int_{0}^{\infty}} q v(x;q) dF(q) = 0, \qquad x \in 
\mathcal{M}_u \setminus \mathcal{J}_u, \label{JODE2}\\  
&V(x) = x - b + V(b), \qquad \qquad \qquad \qquad \;\; x \in \mathcal{S}=[b,r). \label{JODE3} 
\end{align}
Given that $V(\cdot) \in  \mathcal{C}^1(l,r)$, we have again from the condition \eqref{mainthmcond2} that $V'(x)=1$ for $x\in\partial \mathcal{W}_u \cap \partial \mathcal{M}_u$, so that subtracting \eqref{JODE2} from \eqref{JODE1} yields 
\begin{align*}
\lim_{y \in {\rm int}(\mathcal{W}_u),\, y \uparrow x} \tfrac12 \sigma^2(y) V''(y) =0, \quad x \in \partial \mathcal{W}_{u} \cap \partial \mathcal{M}_{u},
\end{align*}
which implies together with $V''(x)=0$ for $x \in {\rm int}(\mathcal{M}_{u})$ thanks to condition \eqref{mainthmcond2}, as well as $\mathcal{J}_u \subseteq {\rm int}(\mathcal{M}_{u})\cup(\partial \mathcal{W}_{u} \cap \partial \mathcal{M}_{u})$, that 
$V(\cdot)\in \mathcal{C}^2\big( (l,r)\setminus \partial\mathcal{S} \big) \cap \mathcal{C}^1(l,r)$.
Then, observe that $\partial\mathcal{S} \setminus\{r\} = \{b\}$, so that we have:

{\it Case} (a). If $b \in \partial\mathcal{M}_u$, then we clearly have from condition \eqref{mainthmcond2} and \eqref{JODE3} that $V'(b)=1$ and $V''(b)=0$; 

{\it Case} (b). If $b \in \partial\mathcal{W}_u$, then thanks to $V(\cdot) \in  \mathcal{C}^1(l,r)$ and \eqref{JODE3}, we have $V'(b)=1$ and from \eqref{JODE1} that 
\begin{align*}
0 &= \lim_{x \uparrow b} \Big\{f(x) + \frac12 \sigma^2(x) V''(x) + \mu(x) V'(x) - \textstyle{\int_{0}^{\infty}} q v(x;q) dF(q) \Big\} \\
&= f(b) + \frac12 \sigma^2(b) \lim_{x \uparrow b} V''(x) + \mu(b) - \textstyle{\int_{0}^{\infty}} q v(b;q) dF(q) 
\geq \frac12 \sigma^2(b) \lim_{x \uparrow b} V''(x),
\end{align*}
where the last inequality follows from condition \eqref{mainthmcond3}.
Assume (aiming for a contradiction) that $\lim_{x \uparrow b} V''(x) < 0$, so that $V'(x)$ is strictly decreasing as $x \uparrow b$ with $V'(b)=1$, i.e.~$V'(x) > 1$ for $x\in(b-\varepsilon, b)$ and a sufficiently small $\varepsilon>0$. 
This contradicts the fact that $V'(x) \leq 1$ thanks to conditions \eqref{mainthmcond1}--\eqref{mainthmcond2}, hence 
$\lim_{x \uparrow b} V''(x) = 0$. 

In both cases, we obtain $V(\cdot)\in \mathcal{C}^2(l,r)$ and thus complete the proof. 
\end{proof}

\subsection{Proof of Theorem \ref{main_thm}}\label{main_thm:proof-section}

Given the assumptions in the statement of Theorem \ref{main_thm} it is clear (cf.~\eqref{adm-set}, Lemmata \ref{useful-lemma} and \ref{useful-lemma-2}) that the control strategy $(u_\beta^*,\mathcal{S}^*,\delta^*)$ is admissible.
We thus proceed by considering the candidate equilibrium pair $(u^*,\mathcal{S}^*) = (u_\beta^*, [b^*,r))$ with either $l<b^*<\beta^*=r$ or $l<b^*=\beta^*<r$ defined in Theorem \ref{main_thm}, so that
$$
v(\cdot;q) = w(\cdot;q,u_\beta^*,[b^*,r)) 
\quad \text{and} \quad 
V(\cdot) = J(\cdot;u_\beta^*,[b^*,r)).
$$
Then consider a fixed but arbitrary $x \in (l,r)$ and a fixed but arbitrary admissible deviation strategy  $({u_\beta,\mathcal{S},\delta}) \in \mathcal{A}_x$ with associated deviation control process $D^{u_\beta,\mathcal{S},\delta}$ as in \eqref{eq:D-forMS}, 
such that the associated SDE \eqref{Xbim} for the controlled process $X^{u_\beta,\mathcal{S},\delta}$ has a unique strong solution for all $0 \leq t \leq \theta^{u_\beta,\mathcal{S},\delta}_{x,\epsilon}$, where the latter is defined by \eqref{eq:tau-eps-stop}. These are the only processes appearing in this section and to simplify notation, we will refer to them (only in this section) as  
\begin{align}\label{q2rq13ra}
u = u_\beta, 
\quad 
D_t = D_t^{u_\beta,\mathcal{S},\delta}, 
\quad 
X_t = X_t^{u_\beta,\mathcal{S},\delta}
\quad \text{and} \quad 
\theta(\epsilon) = \theta^{u_\beta,\mathcal{S},\delta}_{x,\epsilon}.
\end{align}
In order to prove Theorem \ref{main_thm}, we must show that the equilibrium conditions \eqref{EQ1}--\eqref{EQ2} are equivalent to the conditions \eqref{mainthmcond1}--\eqref{mainthmcond3}.

\subsubsection{The equilibrium condition \eqref{EQ1}}
\label{sec:EQ1}
We firstly analyse the case of $x\in (l,\beta) \cap (\mathcal{S}^c \cup \ul{\partial \mathcal{S}})$, for which the associated deviation control $D$ does not jump immediately (consequently neither $X$ jumps immediately). 
That is, the deviation control strategy corresponds locally to either a (classical) absolutely continuous control strategy with rate function $u$, or to the reflection of the state-process at $x\in\ul{\partial \mathcal{S}}$. Furthermore, recalling Definition \ref{def:mixedm} and that the points in $\ul{\partial \mathcal{S}}$ are finitely many, we have for sufficiently small $\epsilon>0$ that $D$ admits the form
\begin{align} \label{D_form_proof}
D_{\theta(\epsilon)}=\int_0^{\theta(\epsilon)}u(X_t)dt+L^x_{\theta(\epsilon)}(X) {\bf 1}_{\{x\in \ul{\partial \mathcal{S}}\}}.
\end{align}
\begin{proposition} \label{splitL}
For any $x\in (l,\beta) \cap (\mathcal{S}^c \cup \ul{\partial \mathcal{S}})$, the function $L$ defined by \eqref{EQ1} is given by 
\begin{align*} 
L(x) &= \lim_{\epsilon \downarrow 0} \frac{\int_0^\infty \mathbb{E}_x\big[ \int_0^{\theta(\epsilon)} \big\{ \big(v'(X_{t};q) - 1 \big) dD_t - \big((\textbf{A}-q) v(X_t;q) {\bf 1}_{\{X_t\not\in \mathcal{J}_{u^*} \cup \partial\mathcal{S}^*\}} + f(X_t) \big) dt \big\} \big] dF(q) }{\E_{x}\left[\theta(\epsilon)\right]}.
\end{align*}
\end{proposition}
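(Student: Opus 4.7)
My plan is to apply It\^o--Tanaka's formula to $e^{-qs}v(X_s;q)$ on $[0,\theta(\epsilon)]$, use the identifications $v(\cdot;q)=w(\cdot;q,u^*,\mathcal{S}^*)$ and $V(\cdot)=J(\cdot;u^*,\mathcal{S}^*)$ from Lemmas~\ref{useful-lemma}--\ref{useful-lemma-2} to rewrite the numerator of $L(x)$ appearing in \eqref{EQ1}, and then show that the discount factors $e^{-qs}$ may be dropped in the limit as $\epsilon\downarrow 0$.

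\textbf{First step: It\^o--Tanaka and rearrangement.} By \eqref{D_form_proof} the deviation control $D$ causes no initial jump, so $X=X^{u_\beta,\mathcal{S},\delta}$ remains in the compact set $[x-\epsilon,x+\epsilon]\subset(l,r)$ throughout $[0,\theta(\epsilon)]$. Applying It\^o--Tanaka to $e^{-qs}v(X_s;q)$---which is legitimate because $v(\cdot;q)\in\mathcal{C}^1(l,r)\cap\mathcal{C}^2((l,r)\setminus(\mathcal{J}_{u^*}\cup\partial\mathcal{S}^*))$ with bounded $v',v''$ on the compact and $\mathcal{J}_{u^*}\cup\partial\mathcal{S}^*$ finite (Lemma~\ref{useful-lemma})---and noting that the resulting Brownian integral is a true martingale since $v'\sigma$ is bounded on this compact, taking expectations yields
\begin{align*}
v(x;q) &= \E_x\bigl[e^{-q\theta(\epsilon)}v(X_{\theta(\epsilon)};q)\bigr] - \E_x\Bigl[\int_0^{\theta(\epsilon)}\!\! e^{-qs}(\textbf{A}-q)v(X_s;q){\bf 1}_{\{X_s\not\in \mathcal{J}_{u^*}\cup \partial \mathcal{S}^*\}}\,ds\Bigr] + \E_x\Bigl[\int_0^{\theta(\epsilon)}\!\! e^{-qs}v'(X_s;q)\,dD_s\Bigr].
\end{align*}
Substituting this identity, together with $J(x;u^*,\mathcal{S}^*)=\int_0^\infty v(x;q)\,dF(q)$ from Lemma~\ref{useful-lemma-2}, into the numerator of $L(x)$ from \eqref{EQ1} and applying Fubini produces precisely the target expression---except that the integrands still carry the factor $e^{-qs}$.

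\textbf{Second step and main obstacle: removing the discount factor.} It remains to show that replacing $e^{-qs}$ by $1$ in the rewritten numerator produces an error that vanishes when divided by $\E_x[\theta(\epsilon)]$ and the $\limsup_{\epsilon\downarrow 0}$ is taken. Using $|e^{-qs}-1|\leq q\theta(\epsilon)$ for $s\in[0,\theta(\epsilon)]$, together with local bounds on $|(\textbf{A}-q)v(y;q)|$, $|v'(y;q)|$ and $f(y)$ (uniform in $y\in[x-\epsilon,x+\epsilon]$ away from the finite set $\mathcal{J}_{u^*}\cup\partial\mathcal{S}^*$) derived from the integrability estimate \eqref{hlq3hjl}, the $ds$-contribution of the error is of order $q\theta(\epsilon)^2 C(y,q)$ and the $dD_s$-contribution of order $q\theta(\epsilon)\cdot C(y,q)\cdot D_{\theta(\epsilon)}$. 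The finite second-moment assumption on $F$ (Assumption~\ref{assum:h}) makes $q\mapsto qC(y,q)$ $dF$-integrable, legitimising Fubini and dominated convergence to interchange $\lim_{\epsilon\downarrow 0}$ with $\int\cdot\,dF(q)$. Dividing by $\E_x[\theta(\epsilon)]$ reduces matters to the standard diffusion estimates $\E_x[\theta(\epsilon)^n]=O(\epsilon^{2n})$, whence $\E_x[\theta(\epsilon)^2]/\E_x[\theta(\epsilon)]\to 0$. The principal technical hurdle is the uniform control of the $dD_s$-term across admissible deviations: when the deviation reflects at $x\in\ul{\partial\mathcal{S}}$ one must bound $\E_x[L^x_{\theta(\epsilon)}(X)]$ via classical one-dimensional local-time estimates, while when $u(\cdot)$ is near-explosive near $\beta$ the condition $x<\beta$ secures local boundedness of $u$ on $[x-\epsilon,x+\epsilon]$ so that $\E_x[\int_0^{\theta(\epsilon)}u(X_s)\,ds]=O(\E_x[\theta(\epsilon)])$.
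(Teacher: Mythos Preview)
Your proposal is correct and follows essentially the same route as the paper: apply It\^o--Tanaka to $e^{-qs}v(X_s;q)$ to rewrite the numerator of \eqref{EQ1}, then show the error from replacing $e^{-qs}$ by $1$ vanishes via $|e^{-qs}-1|\leq qs$, the integrability condition \eqref{hlq3hjl}, and the moment estimates $\E_x[\theta(\epsilon)^2]/\E_x[\theta(\epsilon)]\to 0$ and $\E_x[\theta(\epsilon)D_{\theta(\epsilon)}]/\E_x[\theta(\epsilon)]\to 0$. One small sharpening: for the reflecting deviation the precise quantity you must control is $\E_x[\theta(\epsilon)L^x_{\theta(\epsilon)}(X)]/\E_x[\theta(\epsilon)]$ (not merely $\E_x[L^x_{\theta(\epsilon)}(X)]$), which the paper dispatches by citing \cite[Lemma~28]{bodnariu2022local}.
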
 
\begin{proof}
For $x\in (l,\beta) \cap (\mathcal{S}^c \cup \ul{\partial \mathcal{S}})$ we obtain from an application of the It\^{o}-Tanaka formula for continuous semimartingales and the occupation times formula (see e.g.~\cite[Chapter VI, Exercise 1.25, Corollary 1.6]{revuz2013continuous})
thanks to the regularity of $v(\cdot;q)$ in \eqref{BVPad}, that 
\begin{align*}
e^{- q \theta(\epsilon)} v(X_{\theta(\epsilon)}; q) 
&= v(x;q) 
+ \int_0^{\theta(\epsilon)} e^{-qt} (\textbf{A}-q) v(X_t;q) {\bf 1}_{\{X_t\not\in \mathcal{J}_{u^*} \cup \partial\mathcal{S}^*\}} dt \notag\\
&\quad+ \int_0^{\theta(\epsilon)} e^{-qt} v'(X_{t};q)\sigma(X_t) dW_t 
- \int_0^{\theta(\epsilon)} e^{-qt} v'(X_{t};q) dD_t , 
\quad \text{for a sufficiently small $\epsilon>0$.}
\end{align*}
This implies thanks to the continuity of $\sigma$ and $v'$ that 
\begin{align*}
\mathbb{E}_x\bigg[ \int_0^{\theta(\epsilon)} e^{-qt}  v'(X_{t};q) dD_t + e^{- q \theta(\epsilon)} v(X_{\theta(\epsilon)}; q) \bigg] - v(x;q) 
= \mathbb{E}_x\bigg[ \int_0^{\theta(\epsilon)} e^{-qt} (\textbf{A}-q) v(X_t;q) {\bf 1}_{\{X_t\not\in \mathcal{J}_{u^*} \cup \partial\mathcal{S}^*\}} dt \bigg].
\end{align*}
Recalling the definition \eqref{J=J} of $V$ we obtain, using straightforward manipulations, that the above gives 
\begin{align*}
&V(x) - \int_0^\infty \mathbb{E}_x\bigg[ \int_0^{\theta(\epsilon)} e^{-qt} \big(dD_t + f(X_t) dt) + e^{- q \theta(\epsilon)} v(X_{\theta(\epsilon)}; q) \bigg] dF(q) \\
&= \int_0^\infty \mathbb{E}_x\bigg[ \int_0^{\theta(\epsilon)} e^{-qt} \big(v'(X_{t};q) - 1 \big) dD_t - \int_0^{\theta(\epsilon)} e^{-qt} \Big((\textbf{A}-q) v(X_t;q) {\bf 1}_{\{X_t\not\in \mathcal{J}_{u^*} \cup \partial\mathcal{S}^*\}} + f(X_t) \Big) dt \bigg] dF(q),\notag
\end{align*}
which corresponds to the numerator of $L$; to see this, use the left-hand side of the above equality and Lemmata \ref{useful-lemma} and \ref{useful-lemma-2}.
 
Using this expression in the definition \eqref{EQ1} of $L$, we thus obtain that 
\begin{align*}
L(x) &= \lim_{\epsilon \downarrow 0} \frac{\int_0^\infty \mathbb{E}_x\big[ \int_0^{\theta(\epsilon)} \big\{ \big(v'(X_{t};q) - 1 \big) dD_t - \big((\textbf{A}-q) v(X_t;q) {\bf 1}_{\{X_t\not\in \mathcal{J}_{u^*} \cup \partial\mathcal{S}^*\}} + f(X_t) \big) dt \big\} \big] dF(q) }{\E_{x}\left[\theta(\epsilon)\right]} 
\end{align*}
holds true, if we show that the additional term 
\begin{align} \label{L1}
\begin{split} 
\tild L(x) &:= 
\lim_{\epsilon \downarrow 0} \frac{\int_0^\infty \mathbb{E}_x\big[ \int_0^{\theta(\epsilon)} (e^{-qt} - 1) \big(-(\textbf{A}-q) v(X_t;q) {\bf 1}_{\{X_t\not\in \mathcal{J}_{u^*} \cup \partial\mathcal{S}^*\}} - f(X_t) \big) dt \big] dF(q)}{\E_{x}\left[\theta(\epsilon)\right]}\\ 
&\quad + \lim_{\epsilon \downarrow 0} \frac{\int_0^\infty \mathbb{E}_x\big[ \int_0^{\theta(\epsilon)} (e^{-qt} - 1) \big(v'(X_{t};q) - 1 \big) dD_t \big] dF(q)}{\E_{x}\left[\theta(\epsilon)\right]}
= 0.
\end{split}
\end{align}

The remainder of the proof is thus devoted to proving \eqref{L1}. 
To that end, notice first that by using $|e^{-qt}-1| \leq qt$, we can get
\begin{align} \label{limA}
|\tild L(x)| 
&\leq \lim_{\epsilon \downarrow 0} \frac{\int_0^\infty \mathbb{E}_x\big[ \int_0^{\theta(\epsilon)} q t \big|(\textbf{A}-q) v(X_t;q) {\bf 1}_{\{X_t\not\in \mathcal{J}_{u^*} \cup \partial\mathcal{S}^*\}} + f(X_t) \big| dt + \int_0^{\theta(\epsilon)} q t \big| v'(X_{t};q) - 1 \big| dD_t \big] dF(q) }{\E_{x}\left[\theta(\epsilon)\right]}
\notag\\
&\leq \lim_{\epsilon \downarrow 0} \bigg\{ \sup_{y\in[x-\epsilon,x+\epsilon]} \int_0^\infty q \big|(\textbf{A}-q) v(y;q) {\bf 1}_{\{y\not\in \mathcal{J}_{u^*} \cup \partial\mathcal{S}^*\}} + f(y) \big| dF(q) \, \frac{\mathbb{E}_x [\theta(\epsilon)^2]}{\E_{x}\left[\theta(\epsilon)\right]}\bigg\} \notag\\
&\quad + \lim_{\epsilon \downarrow 0} \bigg\{ \sup_{y\in[x-\epsilon,x+\epsilon]} \int_0^\infty q |v'(y;q)-1| dF(q) \, \frac{\E_x\left[ \theta(\epsilon) D_{\theta(\epsilon)} \right]}{\E_x\left[\theta(\epsilon)\right]} \bigg\}.
\end{align}
The first limit on the right-hand side of \eqref{limA} is zero thanks to \eqref{hlq3hjl}, Assumption \ref{Ass:f} of $f$, and \cite[Lemma A.5]{christensen2020time}. 
Hence, it only remains to show that the second limit on the right-hand side of \eqref{limA} is also zero, as this will then imply \eqref{L1}. This is obtained as follows. 

Recalling \eqref{D_form_proof} and using again \eqref{hlq3hjl} and that $F$ has a finite first moment thanks to Assumption \ref{assum:h}, we can obtain by \cite[Lemma 28]{bodnariu2022local} and \cite[Lemma A.5]{christensen2020time} that 
\begin{align*}
\lim_{\epsilon\downarrow 0} \frac{\E_x\left[\theta(\epsilon) D_{\theta(\epsilon)}\right]}{\E_x\left[\theta(\epsilon)\right]} \leq \lim_{\epsilon\downarrow 0} \left\{ \sup_{y\in [x-\epsilon,x+\epsilon]} |u(x)| \, \frac{\E_x\left[\theta(\epsilon)^2\right]}{\E_x\left[\theta(\epsilon)\right]} + {\bf 1}_{\{x \in \ul{\partial \mathcal{S}}\}} \frac{\E_x\big[\theta(\epsilon) L_{\theta(\epsilon)}^x(X) \big]}{\E_x\left[\theta(\epsilon)\right]} \right\} 
= 0 ,
\end{align*} 
which concludes the validity of \eqref{L1}, and hence completes the proof.
\end{proof}

In light of Proposition \ref{splitL}, we next obtain the expression of $L$ in the special case when the candidate equilibrium control strategy exhibits an immediate jump. 

\begin{corollary} \label{lemma_payoutmix} 
For any $x\in (b^*,r) \cap (l,\beta) \cap (\mathcal{S}^c \cup \ul{\partial \mathcal{S}})$, the function $L$ defined by \eqref{EQ1} satisfies 
\begin{align*}
L(x) = - \mu(x)-f(x)+\int_0^\infty q v(x;q) dF(q).
\end{align*}
\end{corollary}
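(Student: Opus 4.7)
\medskip
The plan is to apply Proposition~\ref{splitL} and exploit the fact that, since $x\in(b^*,r)\subseteq\mathcal{S}^*=[b^*,r)$, the candidate value $v(\cdot;q)$ is affine with unit slope on a neighbourhood of $x$, which annihilates two of the terms in the expression of $L(x)$.

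First I would fix $\epsilon_0\in(0,x-b^*)$ so that $[x-\epsilon,x+\epsilon]\subseteq(b^*,r)$ for every $\epsilon\in(0,\epsilon_0)$. By \eqref{BVPab}, on $[b^*,r)$ we have $v(y;q)=y-b^*+v(b^*;q)$, hence $v'(y;q)=1$ and $v''(y;q)=0$ throughout $(b^*,r)$. Moreover $\mathcal{J}_{u^*}\cup\partial\mathcal{S}^*$ meets $(b^*,r)$ only possibly at $\{b^*\}$, which is not visited by $X$ on $[0,\theta(\epsilon)]$ for $\epsilon<\epsilon_0$. Consequently, on this time interval,
\begin{align*}
\bigl(v'(X_t;q)-1\bigr)dD_t=0 \qquad\text{and}\qquad (\mathbf{A}-q)v(X_t;q)=\mu(X_t)-q\, v(X_t;q).
\end{align*}
Substituting these into the formula for $L(x)$ provided by Proposition~\ref{splitL} yields
\begin{align*}
L(x)=-\lim_{\epsilon\downarrow 0}\frac{\int_0^\infty \mathbb{E}_x\!\left[\int_0^{\theta(\epsilon)}\bigl(\mu(X_t)+f(X_t)-q\, v(X_t;q)\bigr)dt\right]dF(q)}{\E_x[\theta(\epsilon)]}.
\end{align*}

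The final step is to pass to the limit inside both integrals and recognise the claimed right-hand side. For each fixed $q$, the standard local approximation (e.g.~\cite[Lemma A.5]{christensen2020time}) gives
\begin{align*}
\lim_{\epsilon\downarrow 0}\frac{\E_x\!\left[\int_0^{\theta(\epsilon)} g(X_t)\,dt\right]}{\E_x[\theta(\epsilon)]}=g(x)
\end{align*}
whenever $g$ is continuous at $x$. This applies to $g=\mu$, $g=f$ (continuous by Assumption~\ref{Ass:f}) and $g(\cdot)=v(\cdot;q)$ (continuous by \eqref{BVPad}). Interchanging the $dF(q)$ integration with the limit in $\epsilon$ is justified by the dominated convergence theorem using the locally bounded envelope $C(x,q)$ from \eqref{hlq3hjl} together with the finiteness of the first moment of $F$ (Assumption~\ref{assum:h}), which dominates $q\,|v(\cdot;q)|$ uniformly for arguments in $[x-\epsilon_0,x+\epsilon_0]$.

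The main obstacle I anticipate is precisely this last interchange: while the local approximation is applied pointwise in $q$, the ratio in $\epsilon$ depends on $q$ through $v$, and one must produce a $q$-integrable majorant uniform in $\epsilon$. The integrability bound \eqref{hlq3hjl}, combined with the fact that $|v(X_t;q)|$ is controlled by $\sup_{y\in[x-\epsilon_0,x+\epsilon_0]}C(y,q)$ once $\epsilon<\epsilon_0$, supplies this majorant and closes the argument, giving $L(x)=-\mu(x)-f(x)+\int_0^\infty q\, v(x;q)\,dF(q)$.
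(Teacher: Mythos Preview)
Your proof is correct and follows essentially the same route as the paper: both use Proposition~\ref{splitL}, exploit \eqref{BVPab} to obtain $v'(\cdot;q)=1$ and $(\mathbf{A}-q)v=\mu-qv$ on $(b^*,r)$, and then pass to the limit using continuity of $\mu$, $f$, $v$ together with the integrability bound \eqref{hlq3hjl}. One small remark: your claim that $\mathcal{J}_{u^*}\cup\partial\mathcal{S}^*$ does not meet $(b^*,r)$ need not hold literally when $\beta^*=r$ (since $u^*_r$ could have discontinuities there), but this is immaterial because the indicator set has Lebesgue measure zero in the $dt$-integral and $v$ is affine on $(b^*,r)$ regardless.
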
 
\begin{proof}
For $x \in (b^*,r) \cap (l,\beta) \cap (\mathcal{S}^c \cup \ul{\partial \mathcal{S}})$, we obtain from \eqref{BVPab} satisfied by $v(\cdot;q)$ for each $q \in {\rm supp}(F)$, that 
$$
(\textbf{A}-q)v(x;q) = \mu(x) - q v(x;q), \quad \text{for all } x\in(b^*,r)
$$ 
and thanks to \eqref{BVPad} that $v'(x;q)=1$ for all $x\in (b^*,r)$ as well, which can be used in the expression of $L$ in Proposition \ref{splitL} to obtain (recall that $F$ is a probability distribution) 
\begin{align*} 
L(x) 
&= \lim_{\epsilon\downarrow 0} \frac{\int_0^\infty \mathbb{E}_x\big[ - \int_0^{\theta(\epsilon)} 
\big(\mu(X_t) - q v(X_t;q) + f(X_t) \big) dt \big] dF(q)}{\mathbb{E}_x[\theta(\epsilon)]} \notag\\
&\; = -\mu(x)-f(x)+\int_0^\infty q v(x;q) dF(q).
\end{align*}
The last equality is obtained using \eqref{hlq3hjl}, the continuity of $\mu$, $f$ and $v$ from Assumptions \ref{Ass:bounds}, \ref{Ass:f} and \eqref{BVPad}, respectively, and Remark \ref{asdqwrr}.
\end{proof}

Finally, we can also obtain the expression of $L$ in the special case when both the candidate equilibrium and deviation control strategies correspond locally to (classical) absolutely continuous control strategies.

\begin{corollary} \label{lemma_bothmix} 
For any $x\in (l,b^*) \cap (l,\beta) \cap \mathcal{S}^c$, the function $L$ defined by \eqref{EQ1} is given by 
\begin{align*}
L(x) = \frac{1}{2}\left(u(x-)+u(x)-u_\beta^*(x-)-u_\beta^*(x)\right)(V'(x)-1).
\end{align*}
\end{corollary}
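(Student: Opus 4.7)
The starting point is to specialise Proposition \ref{splitL} to the present regime. Since $x \in (l,b^*) \cap (l,\beta) \cap \mathcal{S}^c$, we in particular have $x \notin \underline{\partial\mathcal{S}}$, so \eqref{D_form_proof} reduces to $D_t = \int_0^t u(X_s)\,ds$, i.e.\ the deviation has no local-time component near $x$. Substituting $dD_t = u(X_t)\,dt$ into the formula for $L$ yields
\begin{align*}
L(x) = \lim_{\epsilon\downarrow 0} \frac{\int_0^\infty \mathbb{E}_x\!\left[\int_0^{\theta(\epsilon)}\!\Big\{\big(v'(X_t;q)-1\big)u(X_t) - (\mathbf{A}-q)v(X_t;q)\mathbf{1}_{\{X_t\notin\mathcal{J}_{u^*}\cup\partial\mathcal{S}^*\}} - f(X_t)\Big\}dt\right] dF(q)}{\mathbb{E}_x[\theta(\epsilon)]}.
\end{align*}

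Next, the plan is to use the BVP in Lemma \ref{useful-lemma}. Since $x<b^*$ lies in $(\mathcal{W}_{u^*}\cup\mathcal{M}_{u^*})$, and since $\mathcal{J}_{u^*}\cup\partial\mathcal{S}^*$ is a finite set on any compact neighbourhood of $x$ (hence of Lebesgue measure zero, contributing nothing under $dt$-integration), equation \eqref{BVPaa} gives $-(\mathbf{A}-q)v(X_t;q)-f(X_t) = -u_\beta^*(X_t)\big(v'(X_t;q)-1\big)$ for Lebesgue-a.e.\ $t\leq\theta(\epsilon)$ with sufficiently small $\epsilon$. Combining the two terms in the integrand then yields
\begin{align*}
L(x) = \lim_{\epsilon\downarrow 0} \frac{\int_0^\infty \mathbb{E}_x\!\left[\int_0^{\theta(\epsilon)}\!\big(v'(X_t;q)-1\big)\big(u(X_t)-u_\beta^*(X_t)\big)\,dt\right] dF(q)}{\mathbb{E}_x[\theta(\epsilon)]}.
\end{align*}

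Then, by the integrability assumption \eqref{hlq3hjl} of Lemma \ref{useful-lemma-2} together with Remark \ref{asdqwrr}, I can interchange $\int dF(q)$ with the time integral and expectation via Fubini, and identify $\int_0^\infty v'(\cdot;q)\,dF(q)=V'(\cdot)$. Thus
\begin{align*}
L(x) = \lim_{\epsilon\downarrow 0} \frac{\mathbb{E}_x\!\left[\int_0^{\theta(\epsilon)}\!\big(V'(X_t)-1\big)\big(u(X_t)-u_\beta^*(X_t)\big)\,dt\right]}{\mathbb{E}_x[\theta(\epsilon)]}.
\end{align*}

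The final step, which I expect to be the main obstacle, is the passage to the limit as $\epsilon\downarrow 0$ while correctly accounting for the fact that both $u$ and $u_\beta^*$ are only c\`adl\`ag (and therefore potentially discontinuous) at~$x$. Since $V\in\mathcal{C}^1(l,r)$ by \eqref{JBVPd}, the factor $V'(X_t)-1$ converges to the continuous value $V'(x)-1$ and can be pulled out of the limit. For the jump-discontinuous factor $u-u_\beta^*$, I will invoke the short-time occupation-time asymptotics for regular one-dimensional diffusions: as $\epsilon\downarrow 0$, the ratio $\mathbb{E}_x\!\left[\int_0^{\theta(\epsilon)}g(X_t)\,dt\right]/\mathbb{E}_x[\theta(\epsilon)]$ tends to $\tfrac{1}{2}(g(x-)+g(x))$ for bounded c\`adl\`ag~$g$ (this is the symmetric form of the local averaging results of \cite[Lemma A.5]{christensen2020time} and \cite[Lemma 28]{bodnariu2022local}, reflecting that a regular diffusion started at $x$ spends asymptotically equal amounts of time on each side of its starting point). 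Applying this with $g=u-u_\beta^*$ produces precisely the claimed factor $\tfrac{1}{2}\big(u(x-)+u(x)-u_\beta^*(x-)-u_\beta^*(x)\big)$, completing the proof. The delicate point is verifying that the asymmetric behaviour of $u$ and $u_\beta^*$ across $x$ genuinely contributes only through this symmetric average, which relies on the regularity of~$X$ at $x$ (ensured by Assumption \ref{Ass:bounds} and the fact that $\sigma(x)>0$).
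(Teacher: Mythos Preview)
Your proposal is correct and follows essentially the same route as the paper's proof: both start from Proposition \ref{splitL}, use the BVP \eqref{BVPaa} (with $u_\beta^*$) to collapse the integrand to $(v'(X_t;q)-1)(u(X_t)-u_\beta^*(X_t))$, pass the $dF(q)$-integral inside via \eqref{hlq3hjl} and Remark \ref{asdqwrr} to obtain $V'$, and then apply a short-time averaging result for c\`adl\`ag integrands to extract the symmetric average $\tfrac12(g(x-)+g(x))$. The only cosmetic difference is that the paper cites \cite[Lemma A.4]{christensen2025time} for the final limiting step rather than the lemmas you name, but the underlying diffusion occupation-time argument is the same.
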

\begin{proof} 
For $x \in (l,b^*) \cap (l,\beta) \cap \mathcal{S}^c$, we recall that $v(\cdot;q)$ satisfies \eqref{BVPaa} with $u_\beta=u_\beta^*$, for each $q \in {\rm supp}(F)$, and the deviation control strategy is given by $D_t = \int_0^t u(X_t) dt$ for all $0\leq t < \theta(\epsilon)$ and sufficiently small $\epsilon>0$. 
Using this in the expression of $L$ in Proposition \ref{splitL} we obtain for $x \in (l,b^*) \cap (l,\beta) \cap \mathcal{S}^c$ that 
\begin{align*} 
L(x) 
&= \lim_{\epsilon \downarrow 0} \frac{\int_0^\infty \mathbb{E}_x \big[\int_0^{\theta(\epsilon)} \big(v'(X_{t};q) - 1 \big) \big(u(X_t) - u_\beta^*(X_t)\big) dt \big] dF(q)}{\mathbb{E}_x[\theta(\epsilon)]}.   
\end{align*}
Using \eqref{hlq3hjl}, $V \in {\cal C}^1$ thanks to \eqref{JBVPd}, Remark \ref{asdqwrr}, $u$ and $u_\beta^*$ being c\`adl\`ag, and basic properties of diffusions (cf.~\cite[Lemma A.4]{christensen2025time}), we can further conclude that  
\begin{align*}
L(x) 
&= \lim_{\epsilon\downarrow 0}\frac{\E_x\big[ \int_0^{\theta(\epsilon)}(V'(X_t)-1)(u(X_t) - u_\beta^*(X_t)) dt\big]}{\E_x[\theta(\epsilon)]} 
= \frac{1}{2}(V'(x)-1)\left(u(x-)+u(x)-u_\beta^*(x-)-u_\beta^*(x)\right),
\end{align*}
which completes the proof.
\end{proof}

The general result in Proposition \ref{splitL} and both special cases in Corollaries \ref{lemma_payoutmix}--\ref{lemma_bothmix} will be useful in the forthcoming proofs.

\subsubsection{Proof: \eqref{mainthmcond1}--\eqref{mainthmcond3} $\Rightarrow$ \eqref{EQ1} for any admissible deviation control strategy $(u_\beta, \mathcal{S}, \delta)\in \mathcal{A}_{x}$}
\label{lemma_mixingregion}

Recall that for proving \eqref{EQ1}, we consider deviation control strategies corresponding locally to either an absolutely continuous control
or to the reflection of the state-process at $x\in\ul{\partial \mathcal{S}}$.
In what follows, we assume that the conditions \eqref{mainthmcond1}--\eqref{mainthmcond3} hold true and we split the proof into the following two steps. 

\vspace{1mm}
{\it Step 1. Suppose $x\in (l,b^*] \cap (l,\beta) \cap (\mathcal{S}^c \cup  \ul{\partial \mathcal{S}})$}.
It follows from the expression of $L$ in Proposition \ref{splitL}, conditions \eqref{hlq3hjl}, Remark \ref{asdqwrr}, and the regularity $V \in {\cal C}^2$ thanks to conditions \eqref{mainthmcond1}--\eqref{mainthmcond3} and Corollary \ref{coq_beta}, that for any $x\in (l,\beta) \cap (\mathcal{S}^c \cup \ul{\partial \mathcal{S}})$ we have 
\begin{align*}
L(x) 
&= \lim_{\epsilon \downarrow 0} \frac{\int_0^\infty \mathbb{E}_x\big[ \int_0^{\theta(\epsilon)} \big\{ \big(v'(X_{t};q) - 1 \big) dD_t - \big((\textbf{A}-q) v(X_t;q) {\bf 1}_{\{X_t\not\in \mathcal{J}_{u^*} \cup \partial\mathcal{S}^*\}} + f(X_t) \big) dt \big\} \big] dF(q) }{\E_{x}\left[\theta(\epsilon)\right]} \\
&= \lim_{\epsilon \downarrow 0}
\frac{\mathbb{E}_x\big[ 
\int_0^{\theta(\epsilon)} \big\{ \big(V'(X_{t}) - 1 \big) dD_t + \big(-\textbf{A}V(X_t) + \int_0^\infty q v(X_t;q) dF(q) - f(X_t) \big) dt \big\} \big]}{\E_{x}\left[\theta(\epsilon)\right]}.
\end{align*}
Then note that
\begin{equation} \label{lb}
(l,b^*) 
= \mathcal{W}_{u^*} \cup \mathcal{M}_{u^*} 
= {\rm int}({\mathcal{W}}_{u^*}) \cup (\ol{\mathcal{M}_{u^*}} \setminus \{l, b^*\}),
\end{equation}
and recall from \eqref{mainthmcond1}--\eqref{mainthmcond2} that $V'(x)\leq 1$ for all $x\in (l,b^*)$.
Combining the above with \eqref{JBVPb} and \eqref{JBVPd}, we can also conclude that $V'(b^*) = 1$, hence $V'(x)\leq 1$ for all $x\in (l,r)$.
This implies that 
\begin{align} \label{limB}
L(x)
&\leq \lim_{\epsilon\downarrow 0} \frac{\mathbb{E}_x\big[ \int_0^{\theta(\epsilon)} \big(-\textbf{A}V(X_t) + \int_0^\infty q v(X_t;q) dF(q) - f(X_t) \big) dt \big]}{\E_{x}\left[\theta(\epsilon)\right]} \notag\\
&=- \textbf{A}V(x) + \int_0^\infty q v(x;q) dF(q) - f(x), 
\quad \text{for all } x\in (l,b^*],
\end{align} 
where the last equality is obtained using the continuity of $f$ and $v$ from Assumption \ref{Ass:f} and \eqref{BVPad}, respectively, 
the regularity $V \in {\cal C}^2$ thanks to conditions \eqref{mainthmcond1}--\eqref{mainthmcond3} and Corollary \ref{coq_beta}. 
What remains to prove \eqref{EQ1} is to show that the upper bound of $L$ in \eqref{limB} is zero.

To that end, notice from \eqref{lb} and condition \eqref{mainthmcond2} that 
$$
u_\beta^*(x)(1 - V'(x))=0, \quad 
\text{thanks to } \begin{cases}
u_\beta^*(x)=0, &\text{for } x \in {\rm int}({\mathcal{W}}_{u^*}), \\
V'(x)=1, &\text{for } x \in \overline{\mathcal{M}_{u^*}} \setminus \{l,b^*\} .
\end{cases}
$$
Using this together with \eqref{JBVPa}, we obtain that the upper bound of $L$ in \eqref{limB} satisfies for each $x\in (l,b^*)$ that    
\begin{align} \label{limB=0}
\mathbf{A} V(x) - \textstyle{\int_{0}^{\infty}} q v(x;q) dF(q) + f(x) 
= \mathbf{A} V(x) - \textstyle{\int_{0}^{\infty}} q v(x;q) dF(q) + f(x) - u^*_\beta(x) V'(x) + u^*_\beta(x) 
= 0.
\end{align}
Given that the limit of $\mathbf{A} V(x) - \textstyle{\int_{0}^{\infty}} q v(x;q) dF(q) + f(x)$ as $x \to b^*$ exists, again thanks to $V\in\mathcal{C}^2(l,r)$, we can conclude that the upper bound of $L$ in \eqref{limB} is zero also for $x=b^*$, 
which thus completes the proof of this step.

\vspace{1mm}
{\it Step 2. Suppose $x\in (b^*,r) \cap (l,\beta) \cap (\mathcal{S}^c \cup  \ul{\partial \mathcal{S}})$}.
For such values of $x$ the candidate equilibrium control strategy corresponds to an immediate jump. 
The validity of \eqref{EQ1} in this case is an immediate consequence of Corollary \ref{lemma_payoutmix} and condition \eqref{mainthmcond3}. 
This concludes the proof of the desired equivalence.

\subsubsection{Proof: \eqref{mainthmcond1}--\eqref{mainthmcond3} $\Rightarrow$ \eqref{EQ2} for any admissible deviation control strategy $(u_\beta, \mathcal{S}, \delta)\in \mathcal{A}_{x}$.}

Assume that conditions \eqref{mainthmcond1}--\eqref{mainthmcond3} hold true. 
Recall from \eqref{mainthmcond1}--\eqref{mainthmcond2} and \eqref{eq:mild-string-u} that $V'(x)\leq 1$ for all $x\in (l,b^*)$, which gives together with \eqref{JBVPb} and $V \in {\cal C}^1$ from \eqref{JBVPd} (thanks to the validity of Lemma \ref{useful-lemma-2}) that $V'(x)\leq 1$ for all $x\in (l,r)$.
This further implies, for any $l \leq y \leq x < r$ with $y\in \mathbb{R}$, that
$$
V(x) - V(y) = \int_y^x V'(s) ds \leq x - y .
$$
Hence, it particularly holds for $y=b_i \in \ul{\partial \mathcal{S}}$ in case $x \in \mathcal{S}_i$, and for $y=\beta-\delta$ in case $x \in [\beta,r)$.
This proves that \eqref{EQ2} holds true.

\subsubsection{Proof: \eqref{EQ1}--\eqref{EQ2} for any admissible deviation control strategy $(u_\beta, \mathcal{S}, \delta)\in \mathcal{A}_{x}$ $\Rightarrow$  \eqref{mainthmcond1}--\eqref{mainthmcond3}.}

Since \eqref{EQ1}--\eqref{EQ2} hold true for all admissible deviation controls, they particularly hold true for all classical control strategies $(u_\beta,\mathcal{S},\delta) = (u_r, \emptyset, 0)$ (see Remark \ref{rem:controls}), such that the control rate $u_r(x) = u$ for a constant $u>0$ and all $x\in(l,r)$.
Firstly, notice that thanks to $\beta=r$ and $\mathcal{S} = \emptyset$, the condition \eqref{EQ2} is void for these deviation control strategies.
In what follows, we show that considering this set of controls is sufficient to prove that the condition \eqref{EQ1} implies that  \eqref{mainthmcond1}--\eqref{mainthmcond3} hold true. 

To that end, notice that for any deviation control strategy $(u_r, \emptyset, 0)$ such that $u_r \equiv u > 0$, the state-space can be written as 
\begin{equation} \label{lr}
(l,r) = (l,b^*)
\cup [b^*,r), 
\quad \text{where} \quad 
(l,b^*) = {\rm int}({\mathcal{W}}_{u^*})
\cup (\overline{\mathcal{M}_{u^*}} \setminus \{l,b^*\}).
\end{equation}
We also recall from Corollary \ref{lemma_bothmix} and the condition \eqref{EQ1} for the deviation control strategy $(u, \emptyset, 0)$, that for any $x\in (l,b^*)$ we have  
\begin{align} \label{u-L<0}
L(x) = \frac{1}{2}\left(2 u - u_\beta^*(x-) - u_\beta^*(x) \right)(V'(x)-1) \leq 0. 
\end{align}
In what follows, we split the proof into three steps, each proving the validity of a condition from \eqref{mainthmcond1}--\eqref{mainthmcond3}. 

\vspace{1mm}
{\it Step 1. Suppose $x\in {\rm int}({\mathcal{W}_{u^*}})$}. 
Recall from the definition \eqref{eq:mild-string-u} that $u_\beta^*(x-) + u_\beta^*(x) = 0$, hence we conclude from \eqref{lr}, $\eqref{u-L<0}$ and the positivity of $u$ that $V'(x) \leq 1$, which implies that \eqref{mainthmcond1} holds true.

\vspace{1mm}
{\it Step 2. Suppose $x\in \overline{\mathcal{M}_{u^*}} \setminus \{l,b^*\}$}.  
Recall from the definition \eqref{eq:mild-string-u} that $u_\beta^*(x-) + u_\beta^*(x) > 0$ for all $x\in \mathcal{M}_{u^*}$. Given that for any such $x\in \mathcal{M}_{u^*}$, we can always find $u(x) \equiv u >0$ such that 
$2 u - u_\beta^*(x-) - u_\beta^*(x) > 0,$
and since $\eqref{u-L<0}$ must hold true for all $u>0$, we conclude that $V'(x) = 1$ must hold true for all $x \in \mathcal{M}_{u^*}$. 
Combining this with the property $V \in {\mathcal C}^1(l,r)$ from \eqref{JBVPd}, we can further conclude thanks to \eqref{lr} that $V'(x) = 1$ for all $x \in 
\overline{\mathcal{M}_{u^*}} \setminus \{l,b^*\}$, which implies that \eqref{mainthmcond2} holds true.

\vspace{1mm}
{\it Step 3. Suppose $x \in [b^*,r)$}. 
Recall from Corollary \ref{lemma_payoutmix} and the condition \eqref{EQ1} that 
\begin{align*}
L(x) 
= - \mu(x) - f(x) + \int_0^\infty q v(x;q) dF(q) \leq 0, \quad x \in (b^*,r).
\end{align*}
Thanks to the continuity of $\mu$, $f$ and $v$ from Assumptions \ref{Ass:bounds}, \ref{Ass:f} and \eqref{BVPad}, respectively, we obtain that $L(b^*) \leq 0$ as well, which implies that \eqref{mainthmcond3} holds true. 
\hfill \qedsymbol{}

\section{Case studies with strong and mild threshold control strategies}
\label{sec:mixopt}

In this section, we apply the general theory developed in the preceding sections to a certain time-inconsistent stochastic singular control problem. We examine two case studies in detail, one 
with an equilibrium control given by a strong threshold control strategy (as in the traditional literature on singular control problems), and another one with a novel equilibrium control given by a mild threshold control strategy. 

In mathematical terms, we consider $X$ to be a controlled geometric Brownian motion satisfying  
\begin{align}\label{example_GBM}
dX^D_t=\sigma X^D_tdW_t-dD_t, \quad X^D_{0-}= x \in (l,r) := (0,\infty),
\end{align}
for a constant $\sigma>0$.
We therefore have $\mu(x)=0$ and $\sigma(x)=\sigma x$, for all $x>0$ in \eqref{X} and the infinitesimal generator takes the form $\textbf{A} = \frac{1}{2}\sigma^2 x^2 \frac{d^2}{d x^2}$ in this section. 
We further consider in \eqref{eq:h-function} the pseudo-exponential discount function 
\begin{align*}
h(t) = \tfrac12 e^{-q_1 t} + \tfrac12 e^{-q_2 t}, \quad t \geq 0, \quad 0<q_1<q_2, 
\end{align*} 
so that either a low- or high-discount-rate regime occurs with an equal probability.
We consider a quadratic running cost function $f:[0,\infty) \mapsto [0,\infty)$ defined by $f(x):=\frac12 x^2$.  Note that Assumptions \ref{Ass:bounds}, \ref{Ass:f} and \ref{assum:h} are easily verified for this setup. 

Hence, given an admissible control strategy  $(u_\beta,\mathcal{S},\delta) \in \mathcal{A}$ from \eqref{adm-set} with associated control process  $D^{u_\beta,\mathcal{S},\delta}$ given by \eqref{eq:D-forMS}, the cost criterion (cf.~\eqref{wr} and \eqref{wrd}) associated to each $q_i$, $i=1,2$, is given (due to $l=0$ and $f(l)=0$) by 
\begin{align} \label{ex-wrd} 
\begin{split}
w(x;q_i,u_\beta,\mathcal{S},\delta) = \mathbb{E}_x\bigg[ &\int_0^{\tau^{u_\beta,\mathcal{S},\delta}_0} e^{-q_i t} \, \tfrac12 \, (X^{u_\beta,\mathcal{S},\delta}_t)^2 \, dt + \int_0^{\tau^{u_\beta,\mathcal{S},\delta}_0} e^{-q_i t} \, d D^{u_\beta,\mathcal{S},\delta}_t \bigg], 
\quad x \in (0,\infty).
\end{split}
\end{align}
The associated time-inconsistent cost criterion (cf.~\eqref{cost_equation}) thus becomes 
\begin{align}\label{ex:cost-crit-adm} 
J(x;u_\beta,\mathcal{S},\delta) = \tfrac12 \, w(x; q_1,u_\beta,\mathcal{S},\delta) + \tfrac12 \, w(x; q_2,u_\beta,\mathcal{S},\delta), 
\quad x \in (0,\infty). 
\end{align}
Therefore, the time-inconsistent singular stochastic control problem takes the form: 
\begin{align} \label{TISC}
\begin{split}
&\qquad \quad \;\; \text{Find an equilibrium strategy $(u_\beta^*, \mathcal{S}^*)$ 
satisfying Definition \ref{def:equ_stop_time} for} \\ &\text{$w(\cdot;q,u^*,\mathcal{S}^*)$ and $J(\cdot; u^*,\mathcal{S}^*)$ defined by \eqref{wrqD} and \eqref{eq:limiting-cost-cr} in accordance with \eqref{ex-wrd}--\eqref{ex:cost-crit-adm}}.
\end{split}
\end{align}

Throughout Section \ref{sec:mixopt} we consider the following standing assumption.
\begin{assumption}\label{assum:appli}
The model parameters satisfy $q_2>q_1>\sigma^2$.
\end{assumption}

In what follows, we present two case studies. In Section \ref{sec:app-pure} we present a case study in a parameter constellation that leads to an equilibrium given by a strong threshold control strategy; recall that this corresponds to a reflecting boundary (strong threshold) for the controlled process. 
In Section \ref{sec:mixed-case}, we present a case study in a different parameter constellation that instead leads to an equilibrium control strategy given by a mild threshold control strategy; recall that this corresponds to a control process absolutely continuous with respect to the Lebesgue measure, with a rate which explodes at a point in the interior of the state-space and creates an entrance-not-exit boundary (mild threshold) for the controlled process.

\subsection{Case study with strong threshold control equilibrium strategy} 
\label{sec:app-pure}

Recall from Definition \ref{def:strong} that a strong threshold control strategy results in a controlled process $X^D$ given by the unique strong solution to the SDE \eqref{Xb} (cf.~Section \ref{sec:strong}), which under the dynamics of \eqref{example_GBM} becomes 
$$
X_t^{D} = x + \sigma \int_0^t X_s^{D} \, dW_s - (x-b)^+ - L^b_t(X^{D}), 
\quad  0 \leq t \leq \tau^D_0.
$$
The decision maker should thus optimally choose $b \in (0,\infty)$, such that the strong threshold control strategy $(0,[b,\infty), 0)$ yields an equilibrium strategy $(0,[b,\infty))$ as in Definition \ref{def:equ_stop_time}. This strategy would then split the state-space into a waiting region $\mathcal{W}=(0,b)$ where no control is exerted, and a strong action region $\mathcal{S}=[b,\infty)$ where the minimal amount of control is exerted to maintain the controlled process $X^D$ in the closure $\overline{\mathcal{W}}=(0,b]$ of the waiting region, $\mathbb{P}$-a.s.~for all $t \geq 0$. 
The latter prescribes the downward reflection of $X^D$ at $b$ and a potential initial downward jump to $b$ at time $0$ if it starts from $X^D_{0-} = x > b$. 

In the forthcoming analysis, we firstly construct our candidate strong threshold control equilibrium strategy in Section \ref{sec:inform-strong}. 
Subsequently, we prove in Section \ref{sec:Equib}, by relying on our general verification theorem, that the constructed candidate leads to an equilibrium in the case study when the difference between the two possible discount rates $q_2$ and $q_1$ is sufficiently small; see Theorem \ref{prop_pure_ex} and an illustration in Figure \ref{fig:purework1}--\ref{fig:purederwork1}. 
As a by-product of our general theory, by using the equivalence relationship in our general verification theorem, we can also prove that there is no equilibrium in the form of strong threshold control strategies in the case study when the difference between $q_2$ and $q_1$ is sufficiently large; see Theorem \ref{thm:no-pure-NE} and an illustration in Figure \ref{fig:purenotwork1}--\ref{fig:purdernotworke1}. This is the fundamental step that motivates and necessitates our search for an equilibrium in the form of mild threshold control strategies and paves the way for the analysis of this latter case study in Section \ref{sec:mixed-case}.

\subsubsection{Construction of a candidate equilibrium via a strong threshold control strategy}
\label{sec:inform-strong}

In light of Theorem \ref{main_thm}, we derive our candidate strong threshold control equilibrium strategy $(0,[b^*,\infty))$ from the values $v(\cdot;q_i)$, $i=1,2$, and strong threshold $b^*$, that solve the associated BVP from Lemma \ref{useful-lemma}, which takes the form 
\begin{align}
&\tfrac12 x^2 + \tfrac12 \sigma^2 x^2 v''(x;q_i) - q_i v(x;q_i) = 0, \quad x \in (0,b^*), 
\label{xBVPaa}\\
&v(x;q_i) = x - b^* + v(b^*;q_i), \qquad \qquad \quad \; x \in [b^*,\infty), 
\label{xBVPab}\\
&v(0;q_i) = 0, 
\label{xBVPac}\\
& \text{$v(\cdot;q_i)\in \mathcal{C}^2\big( (0,\infty) \setminus \{b^*\} \big) \cap \mathcal{C}^1(0,\infty)$ with $|v''(b^*-;q_i)|<\infty$.}
\label{xBVPad}
\end{align}
Then, we will know from \eqref{w=w} that the associated limiting cost criterion $w(\cdot;q_i,0,[b^*,\infty))$ defined by \eqref{wrqD} exists and is given by $w(x;q_i,0,[b^*,\infty)) = v(x;q_i)$, for all $x \in (0,\infty)$ and $i=1,2$.

Combining the general solution to the ODE in \eqref{xBVPaa} with the condition \eqref{xBVPac} implies that 
\begin{align}\label{w_pure_def}
v(x;q_i)=A_ix^{\gamma_i}+\frac{x^2}{2(q_i-\sigma^2)}, \quad x\in (0,b^*), 
\quad \text{where} \quad 
\gamma_i:=\frac{1}{2}\bigg(1+\sqrt{1+\frac{8q_i}{\sigma^2}} \bigg),\quad i=1,2,
\end{align}
for some constant $A_i$ to be determined, and $\gamma_i>2$, thanks to Assumption \ref{assum:appli}, for both $i=1,2$. 
Then, using the continuous differentiability of $v$ from \eqref{xBVPad} and $v'(b^*+;q_i) = 1$ from \eqref{xBVPab} we obtain that $v'(b^*-;q_i) = 1$ as well, which yields via \eqref{w_pure_def} that 
\begin{align} \label{Ai}
A_i = \Big(1-\frac{b^*}{q_i-\sigma^2} \Big) \gamma_i^{-1} (b^*)^{1-\gamma_i} , 
\quad i=1,2.
\end{align}
By noticing that \eqref{hlq3hjl} is straightforwardly satisfied, we can also conclude from Lemma \ref{useful-lemma-2} that the associated limiting cost criterion $J(\cdot;0,[b^*,\infty))$ defined by \eqref{eq:limiting-cost-cr} exists and is given by (cf.~\eqref{J=J})
\begin{equation} \label{J_pure_def}
J(x;0,[b^*,\infty)) = V(x) := \tfrac12 \, v(x; q_1) + \tfrac12 \, v(x; q_2), \quad x \in (0,\infty).
\end{equation}

Building further in the spirit of Theorem \ref{main_thm}, if our candidate $(0,[b^*,\infty))$ ends up being an equilibrium, then we will know from Corollary \ref{coq_beta} that  $V \in \mathcal{C}^2(0,\infty)$ and thus $V''(b^*)=0$. 
This implies via \eqref{J_pure_def} that a necessary condition for equilibrium is
\begin{align}\label{sfsdfdsas}
0 
= 2 V''(b^*-) = v''(b^*-;q_1) + v''(b^*-;q_2) = \sum_{i=1}^2 \Big( A_i \gamma_i (\gamma_i-1) (b^*)^{\gamma_i-2} + \frac{1}{q_i-\sigma^2} \Big),
\end{align}
which therefore gives that our candidate strong threshold $b^*$ must be given by 
\begin{align} \label{xhat_def}
b^*=\frac{(\gamma_1 + \gamma_2 - 2)(q_1 - \sigma^2)(q_2 - \sigma^2)}{(\gamma_1 - 2)(q_2 - \sigma^2) + (\gamma_2 - 2)(q_1 - \sigma^2)}
>0,
\end{align}
whose positivity follows straightforwardly from Assumption \ref{assum:appli} and its implication that $\gamma_i>2$, $i=1,2$.

\subsubsection{Small discount rate difference: Strong threshold control equilibrium strategy} 
\label{sec:Equib}

In this section we investigate the conditions under which our candidate $(0,[b^*,\infty))$ corresponding to $b^*$ defined by \eqref{xhat_def} is indeed an equilibrium. 
To that end, we firstly present an important result regarding the properties of the candidate value function $V$.

\begin{proposition} \label{non_ex_equilibrum}
Recall $V$ and $b^*$ defined by \eqref{J_pure_def} and \eqref{xhat_def}, respectively. 
\begin{enumerate}[\rm (i)]
\item For each $q_1$, there exits a constant $h>0$, such that if 
$q_2-q_1 < h$, then $V'(x)<1$ for all $x\in(0,b^*)$; 
\item For each $q_1$, there exits a constant $h>0$, such that if 
$q_2-q_1 \geq h$, then $V'(x)>1$ for $x\in (b^*-\epsilon, b^*)$ and a sufficiently small $\epsilon>0$.
\end{enumerate}
\end{proposition}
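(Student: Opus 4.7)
The plan is to exploit the closed-form expressions for $V$ implied by \eqref{w_pure_def}--\eqref{J_pure_def} and to read off the sign of $V'(x) - 1$ via the second and third derivatives at $b^*$. Setting $c_i := b^*/(q_i - \sigma^2)$, formula \eqref{Ai} gives $A_i\,\gamma_i\,(b^*)^{\gamma_i - 1} = 1 - c_i$, so
$$
v'(x;q_i) = (1 - c_i)\,(x/b^*)^{\gamma_i - 1} + c_i\,x/b^*, \qquad v'''(x;q_i) = \frac{(1 - c_i)(\gamma_i - 1)(\gamma_i - 2)}{(b^*)^2}\,(x/b^*)^{\gamma_i - 3}.
$$
Consequently $V'(b^*-) = 1$ is automatic, while $V''(b^*-) = 0$ is exactly the defining equation \eqref{sfsdfdsas} for $b^*$; since $\gamma_i > 2$ under Assumption~\ref{assum:appli}, the sign of $v'''(\cdot;q_i)$ on $(0,\infty)$ coincides with that of $1 - c_i$.

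For part (i), I would show that for $q_2 - q_1$ small both $c_1, c_2 > 1$, so $v'''(x;q_i) < 0$ for both $i$ and all $x > 0$, yielding $V''' < 0$ on $(0,\infty)$. Then $V''$ is strictly decreasing; together with $V''(b^*-) = 0$ this forces $V'' > 0$ on $(0,b^*)$, so $V'$ is strictly increasing on $(0,b^*)$ from $V'(0+) = 0$ to $V'(b^*-) = 1$, giving the required strict inequality $V' < 1$ throughout $(0,b^*)$. Because $q_1 < q_2$, the condition $c_1, c_2 > 1$ reduces to $b^* > q_2 - \sigma^2$, which by \eqref{xhat_def} is equivalent to
$$
\gamma_2\,(q_1 - \sigma^2) > (\gamma_1 - 2)(q_2 - \sigma^2).
$$
At $q_2 = q_1$ this is the strict inequality $\gamma_1 > \gamma_1 - 2$, so continuity of $\gamma_2$ in $q_2$ delivers an $h = h(q_1) > 0$ such that the inequality persists on $[q_1, q_1 + h)$.

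For part (ii), the plan is to show
$$
V'''(b^*-) = \frac{1}{2(b^*)^2}\Big[(1-c_1)(\gamma_1-1)(\gamma_1-2) + (1-c_2)(\gamma_2-1)(\gamma_2-2)\Big] > 0
$$
for $q_2 - q_1$ large enough. A Taylor expansion at $b^*$ using $V''(b^*-) = 0$ then gives $V''(x) < 0$ on some $(b^* - \epsilon, b^*)$, so $V'$ is strictly decreasing there and $V'(x) > V'(b^*-) = 1$ on that interval. For positivity of the bracket I would perform an asymptotic analysis as $q_2 \to \infty$: since $\gamma_2 \sim \sigma^{-1}\sqrt{2 q_2}$, formula \eqref{xhat_def} gives $b^* \sim \gamma_2(q_1 - \sigma^2)/(\gamma_1 - 2)$, so $c_1 \sim \gamma_2/(\gamma_1 - 2) \to \infty$ while $c_2 \to 0$, and the bracket behaves like $\gamma_2^2 - \gamma_2(\gamma_1 - 1) + O(1)$, diverging to $+\infty$. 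Hence there exists $Q(q_1) > q_1$ with $V'''(b^*-) > 0$ for all $q_2 \geq Q(q_1)$, and one takes $h := Q(q_1) - q_1$.

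The main obstacle lies in part (i), which demands the \emph{global} pointwise bound $V'(x) < 1$ on the entire interval $(0, b^*)$, not only close to $b^*$. Local information like $V'''(b^*-) < 0$ alone would not preclude an interior excursion $V' > 1$. What rescues the argument is that in the small-$q_2 - q_1$ regime both $A_i$ carry the same negative sign, which forces $V'''$ to keep a constant negative sign across the whole of $(0,\infty)$; the smooth-fit condition $V''(b^*-) = 0$ then pins $V'' > 0$ on $(0,b^*)$ and rules out any such excursion. This is what motivates the sharp algebraic criterion $b^* > q_2 - \sigma^2$.
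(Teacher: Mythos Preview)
Your proof is correct and differs from the paper's argument chiefly in how you handle the global bound in part~(i). The paper observes that $V'''$ has at most one zero on $(0,b^*)$ (from the explicit form $V'''(x)=\tfrac12 x^{\gamma_1-3}(A_1\gamma_1(\gamma_1-1)(\gamma_1-2)+A_2\gamma_2(\gamma_2-1)(\gamma_2-2)x^{\gamma_2-\gamma_1})$), and then computes $V'''(b^*-)$ not directly but by differentiating the ODEs \eqref{xBVPaa} to obtain $\lim_{x\uparrow b^*}V'''(x)=(q_1+q_2-2b^*)/(\sigma^2(b^*)^2)$; together with $V''(0+)>0$ and $V''(b^*)=0$ this dichotomy forces $V''>0$ on $(0,b^*)$ whenever $q_1+q_2<2b^*$. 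You instead impose the stronger condition $c_1,c_2>1$ (equivalently $A_1,A_2<0$), which makes each $v'''(\cdot;q_i)$ negative and hence $V'''<0$ globally, bypassing the one-zero dichotomy entirely. Your route is more elementary; the paper's route yields the slightly sharper threshold $b^*>(q_1+q_2)/2$ rather than your $b^*>q_2-\sigma^2$, but both are valid for producing \emph{some} $h>0$. For part~(ii) the two arguments are essentially the same---compute $V'''(b^*-)$ and analyse its sign as $q_2\to\infty$---except that you work from the explicit power-law formula while the paper uses the differentiated ODE.

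One minor slip: your displayed equivalence for $b^*>q_2-\sigma^2$ should read $\gamma_1(q_1-\sigma^2)>(\gamma_1-2)(q_2-\sigma^2)$, not $\gamma_2(q_1-\sigma^2)>(\gamma_1-2)(q_2-\sigma^2)$. Since you only evaluate it at $q_2=q_1$ (where $\gamma_1=\gamma_2$) before invoking continuity, the slip is harmless for the argument.
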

\begin{proof}
It is easy to see from \eqref{w_pure_def} and \eqref{J_pure_def}, as well as by construction, that 
\begin{equation} \label{V''0}
V''(0+) = \frac{1}{2(q_1-\sigma^2)} + \frac{1}{2(q_2-\sigma^2)} > 0 
\quad \text{and} \quad V''(b^*) = 0.
\end{equation}
Then, we also observe directly from \eqref{w_pure_def} that $V \in \mathcal{C}^3(0,b^*)$, and 
\begin{align*}
V'''(x) = 
\tfrac{1}{2} x^{\gamma_1-3} \left( A_1\gamma_1(\gamma_1-1)(\gamma_1-2) + A_2\gamma_2(\gamma_2-1)(\gamma_2-2) x^{\gamma_2-\gamma_1}\right),  \quad x \in (0,b^*),
\end{align*}
yielding that $V'''(\cdot)$ has at most one zero on $(0,b^*)$. 
Combining this with \eqref{V''0}, it is possible to verify that
\begin{align} \label{V'''zeros}
\text{either} \quad 
V''(x)>0, \quad x \in (0,b^*), 
\quad \text{or} \quad 
V''(x) \begin{cases} > 0 , & x \in (0, x_m), \\
< 0 , & x \in (x_m,b^*),
\end{cases}
\quad \text{for some } x_m \in (0,b^*).
\end{align}
By differentiating the ODEs \eqref{xBVPaa} with respect to $x$ for each $i=1,2$ and adding them, we obtain from \eqref{J_pure_def} that 
\begin{align*}
&2x + \sigma^2 x^2 V'''(x) + 2 \sigma^2 x V''(x) - q_1 v'(x;q_1) - q_2 v'(x;q_2) = 0, \quad x \in (0,b^*).
\end{align*}
Hence, given that $v'(b^*;q_i) = 1$ thanks to \eqref{xBVPad} and $V''(b^*)=0$ thanks to \eqref{V''0}, we obtain that  
\begin{align} \label{V'''b}
&\lim_{x \uparrow b^*} V'''(x) 
= \frac{H(q_2;q_1)}{\sigma^2 b^*(q_2;q_1)^2},
\quad \text{where \;$H(q_2;q_1):=q_1 + q_2 - 2b^*(q_2;q_1)\;$ and $\;b^*(q_2;q_1) \equiv b^*$ from \eqref{xhat_def}.}
\end{align}
It thus follows from the above that the sign of $\lim_{x \uparrow b^*} V'''(x)$ is given for each fixed $q_1 > \sigma^2$ by the sign of $H(\cdot;q_1)$ on $(q_1,\infty)$.

We observe from $\gamma_1 = \gamma(q_1)$ in \eqref{gammaq}, Lemma \ref{nec_inequality} and Assumption \ref{assum:appli}, that   
\begin{align} \label{Hq1}
H(q_1;q_1) 
= 2 \big(q_1 - b^*(q_1;q_1)\big)
= \frac{2 ( \sigma^2 \gamma(q_1) - \sigma^2 - q_1)}{\gamma(q_1) - 2} < 0 
\end{align}
as well as, by using $q_2=q_1+h$ for $h>0$ (without loss of generality) so that $\gamma_2 = \gamma(q_1 + h)$ in \eqref{gammaq}, that  
\begin{align} \label{Hinf}
\begin{split}
\lim_{h \to \infty} H(q_1+h;q_1) 
&= \lim_{h \to \infty} \Big\{ 2 q_1 + h - \frac{2\big(\gamma(q_1) + \gamma(q_1+h) - 2\big)(q_1 - \sigma^2)(q_1+h - \sigma^2)}{\big(\gamma(q_1) - 2\big)(q_1+h - \sigma^2) + \big(\gamma(q_1+h) - 2\big)(q_1 - \sigma^2)} \Big\} \\
&= \lim_{h \to \infty} \big\{ h \big(1 + 2 \psi(h)\big) \big\} = \infty,
\end{split}
\end{align}
where we define $\psi:(0,\infty)\mapsto \R$ by  
\begin{align*}
\psi(h) 
&:= \frac{q_1}{h} - \Big(\frac{\gamma(q_1) + \gamma(q_1+h) - 2}{h} \Big) \Big(\frac{\gamma(q_1) - 2}{q_1 - \sigma^2} + \frac{\gamma(q_1+h) - 2}{q_1+h - \sigma^2} \Big)^{-1} , 
\quad \text{such that } \lim_{h \to \infty} \psi(h) = 0. 
\end{align*}
It therefore follows from \eqref{Hq1}--\eqref{Hinf} and the continuity of $H(\cdot;q_1)$ on $(q_1,\infty)$ that: 

(i) There exists $h>0$ such that $H(q_2;q_1)<0$ for all $q_2 \in (q_1, q_1 + h)$, and consequently from \eqref{V'''b} that $\lim_{x \uparrow b^*} V'''(x) < 0$. 
Hence, for $q_2 - q_1 < h$, we have  that $V''(\cdot)$ is decreasing in $(b^*-\epsilon, b^*)$ for a sufficiently small $\epsilon>0$.
Combining this with \eqref{V''0}, we obtain that $V''(x)>0$ for $x \in (b^*-\epsilon, b^*)$, which contradicts the second possibility in \eqref{V'''zeros}, thus $V''(x)>0$, for all $x \in (0,b^*)$. 
This in turn yields that $V'(\cdot)$ is strictly increasing on $(0,b^*)$ and given that $V'(b^*)=1$, we conclude that $V'(x) < 1$ for all $x \in (0,b^*)$.

(ii) There exists $h>0$ such that $H(q_2;q_1)>0$ for all $q_2 \in (q_1 + h,\infty)$, and consequently from \eqref{V'''b} that $\lim_{x \uparrow b^*} V'''(x) > 0$. 
Hence, for $q_2 - q_1 > h$, we have that $V''(\cdot)$ is increasing in $(b^*-\epsilon, b^*)$ for a sufficiently small $\epsilon>0$.
Combining this with \eqref{V''0}, we obtain that $V''(x)<0$ for $x \in (b^*-\epsilon, b^*)$, which contradicts the first possibility in \eqref{V'''zeros}.
This in turn yields that $V'(\cdot)$ is strictly decreasing on $(b^*-\epsilon, b^*)$ and given that $V'(b^*)=1$, we conclude that $V'(x) > 1$ for $x \in (b^*-\epsilon, b^*)$.
\end{proof}

We are now ready to present our main result, according to which we have an equilibrium strategy $(0,[b^*,\infty))$ given in terms of the strong threshold control strategy $(0,[b^*,\infty),0)$, when the difference $q_2-q_1$ of the possible discount rates is sufficiently small. 
Its proof is based on an application of our general verification Theorem \ref{main_thm}. 

\begin{theorem}\label{prop_pure_ex} 
For each $q_1$, there exits a constant $h>0$ such that if 
$q_2-q_1\leq h$, then the strong threshold control strategy $(0,[b^*,\infty),0)$ corresponding to $b^*$ given by \eqref{xhat_def} leads to an equilibrium strategy $(0,[b^*,\infty))$ for the time-inconsistent singular control problem defined by \eqref{TISC}.
\end{theorem}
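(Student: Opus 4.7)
The strategy is to apply the Verification Theorem \ref{main_thm} to the candidate $(u_\beta^*, \mathcal{S}^*) = (0, [b^*, \infty))$, viewed as a generalised threshold control strategy with $\beta^* = r = \infty$, $u_\beta^* \equiv 0$, $\delta^* = 0$, and reflecting boundary $b^* \in (0, \infty)$. The associated controlled SDE admits a unique strong solution by standard Skorokhod reflection theory (cf.~Section \ref{sec:strong}). The functions $v(\cdot;q_i)$ from \eqref{w_pure_def}--\eqref{Ai} together with $b^*$ from \eqref{xhat_def} solve the BVP \eqref{xBVPaa}--\eqref{xBVPad} by construction; the integrability conditions \eqref{hlq3hjl} of Lemma \ref{useful-lemma-2} are trivially satisfied here since $F$ is supported on $\{q_1,q_2\}$ and $v(\cdot;q_i)$, $v'(\cdot;q_i)$, $v''(\cdot\pm;q_i)$ are locally bounded on $(0,\infty)$. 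Hence $J(\cdot;0,[b^*,\infty)) = V(\cdot)$ as in \eqref{J_pure_def}, and it remains to verify conditions \eqref{mainthmcond1}--\eqref{mainthmcond3} of Theorem \ref{main_thm}.

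Since $u_\beta^* \equiv 0$, we have $\mathcal{M}_{u^*} = \emptyset$ and $\mathcal{W}_{u^*} = (0, b^*)$, so \eqref{mainthmcond2} is vacuous. Condition \eqref{mainthmcond1} requires $V'(x) \leq 1$ on $(0, b^*)$, which is exactly Proposition \ref{non_ex_equilibrum}(i), valid for $q_2 - q_1 < h_1$ with some $h_1 > 0$. The main substantive step is to verify \eqref{mainthmcond3}, namely that
\begin{align*}
G(x) := \tfrac{1}{2} x^2 - \tfrac{1}{2} \big( q_1 v(x;q_1) + q_2 v(x;q_2) \big) \geq 0, \quad x \in [b^*, \infty).
\end{align*}
For $x \geq b^*$, the explicit form $v(x;q_i) = x - b^* + v(b^*;q_i)$ from \eqref{xBVPab} gives $G'(x) = x - (q_1+q_2)/2$. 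The key observation is that $G(b^*) = 0$: summing the ODE \eqref{xBVPaa} evaluated at $x = b^*-$ over $i = 1, 2$ yields
\begin{align*}
(b^*)^2 - \big(q_1 v(b^*;q_1) + q_2 v(b^*;q_2)\big) = -\sigma^2 (b^*)^2 \big(v''(b^*-;q_1) + v''(b^*-;q_2)\big) = 0,
\end{align*}
where the last equality is the defining relation \eqref{sfsdfdsas} from which $b^*$ was derived. Hence it suffices to show $G'(b^*) \geq 0$, i.e.~$b^* \geq (q_1+q_2)/2$, which is equivalent to $H(q_2;q_1) \leq 0$ with $H$ as in \eqref{V'''b}. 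But the proof of Proposition \ref{non_ex_equilibrum}(i) already establishes, via \eqref{Hq1}--\eqref{Hinf} and the continuity of $H(\cdot;q_1)$, the existence of a constant $h_2 > 0$ with $H(q_2;q_1) < 0$ for all $q_2 \in (q_1, q_1 + h_2)$. Setting $h := \min(h_1, h_2)$ then yields \eqref{mainthmcond1}--\eqref{mainthmcond3} simultaneously for $q_2 - q_1 \leq h$, and the verification is complete.

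\textbf{Main obstacle.} The delicate aspect is not the boundary equality $G(b^*)=0$ (which is essentially the smooth-fit condition re-expressed via the ODE), but the realisation that condition \eqref{mainthmcond3} is governed by exactly the same quantity $H(q_2;q_1)$ that already controls \eqref{mainthmcond1}. Thus no additional smallness assumption beyond that of Proposition \ref{non_ex_equilibrum}(i) is required, and this alignment between the two verification conditions is precisely what makes the strong threshold equilibrium viable in this parameter regime (and is the phenomenon that later breaks down in Theorem \ref{thm:no-pure-NE}).
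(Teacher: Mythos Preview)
Your proof is correct and follows essentially the same route as the paper: verify the assumptions of the Verification Theorem, note that \eqref{mainthmcond2} is vacuous, invoke Proposition \ref{non_ex_equilibrum}(i) for \eqref{mainthmcond1}, and for \eqref{mainthmcond3} use $G(b^*)=0$ (via the summed ODE and the smooth-fit relation \eqref{sfsdfdsas}) together with $G'(x)=x-(q_1+q_2)/2$ and a smallness argument to get $G'(b^*)\geq 0$. Your additional observation---that the condition $b^*\geq (q_1+q_2)/2$ needed for \eqref{mainthmcond3} is exactly $H(q_2;q_1)\leq 0$, the very quantity whose negativity in the proof of Proposition \ref{non_ex_equilibrum}(i) already secures \eqref{mainthmcond1}---is a genuine clarification: the paper establishes the two smallness thresholds by separate continuity arguments (one via $H$, one via $b^*(q_1;q_1)-q_1>0$), whereas you recognise that these are the same condition, so a single $h$ suffices and the alignment is intrinsic rather than coincidental.
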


\begin{proof} 
Note that the SDE corresponding to the strong threshold control strategy $(0,[b^*,\infty),0)$ has a unique strong solution (cf.~Section \ref{sec:strong}).
Moreover, note that the solution $v(\cdot;q_i)$ to the associated BVP from Lemma \ref{useful-lemma}, which takes the form \eqref{xBVPaa}--\eqref{xBVPad}, is constructed by \eqref{w_pure_def}, \eqref{Ai} and \eqref{xhat_def}, and satisfies $w(\cdot;q_i,0,[b^*,\infty)) = v(\cdot;q_i)$ for each $i=1,2$. 
We also note that condition \eqref{hlq3hjl} is straightforwardly verified, and we further obtain that $J(\cdot;0,[b^*,\infty))$ satisfies \eqref{J_pure_def}, by Lemma \ref{useful-lemma-2}. 
Hence, all that remains is to verify that conditions \eqref{mainthmcond1}--\eqref{mainthmcond3} of the (verification) Theorem~\ref{main_thm} with $0<b^*<\beta^*=\infty$  hold true (recalling that $l=0$, $r=\infty$ and \eqref{xhat_def}).

\vspace{1mm}
{\it Verification of condition \eqref{mainthmcond1}}. This follows from the result that there exits, for each $q_1$, a constant $h>0$ such that if $q_2-q_1 < h$, then $V'(x)<1$ for all $x\in(0,b^*)={\rm int}(\mathcal{W}_0)$ (recall that $u_\beta\equiv0$) thanks to Proposition \ref{non_ex_equilibrum}.(i), combined with the fact that $V'(b^*) = 1$ due to its definition \eqref{J_pure_def} and $v'(b^*;q_i) = 1$, $i=1,2$, which follows from \eqref{xBVPab} and \eqref{xBVPad}.

\vspace{1mm}
{\it Verification of condition \eqref{mainthmcond2}}. 
This is trivial since the mild action region $\mathcal{M}_0 \equiv \emptyset$ in \eqref{eq:mild-string-u}, given that $u_\beta \equiv 0$ for the candidate equilibrium strategy $(0,[b^*,\infty), 0)$. 

\vspace{1mm}
{\it Verification of condition \eqref{mainthmcond3}}.
Denote the left-hand side of \eqref{mainthmcond3} by (recall that $\mu(\cdot) \equiv 0$ and $f(x)=\frac{x^2}{2}$)
\begin{align*}
\phi(x):
& =\tfrac{1}{2} x^2 - \tfrac{1}{2} q_1 v(x;q_1) - \tfrac{1}{2} q_2 v(x;q_2) , 
\quad x\in[b^*,\infty).
\end{align*}
Differentiating $\phi(x)$ and using the fact that $v'(x;q_i) = 1$ for $x\in[b^*,\infty)$, thanks to \eqref{xBVPab}, we obtain 
\begin{align*}
\phi'(x) = x - \tfrac12 (q_1+q_2) > b^* - \tfrac12 (q_1+q_2), 
\quad x\in (b^*,\infty).
\end{align*}
Recalling its definition \eqref{xhat_def} we write $b^* = b^*(q_2;q_1)$, and using the continuity of $b^*(\cdot;q_1)$ and the fact that $b^*(q_1;q_1) - q_1 =\frac{q_1 - \sigma^2 \gamma_1 + \sigma^2}{\gamma_1-2} > 0$ (thanks to Lemma \ref{nec_inequality} and Assumption \ref{assum:appli}), we can see that there exists $h>0$ such that $b^*(q_2;q_1) - \tfrac12(q_1+q_2)>0$ for all $q_2 \in (q_1, q_1 + h)$, so that $\phi'(\cdot)>0$ on $(b^*,\infty)$ for such $q_2$. 
Combining this with 
\begin{align*}
\phi(b^*) 
= \tfrac{1}{2} (b^*)^2 - \tfrac{1}{2} q_1 v(b^*;q_1) - \tfrac{1}{2} q_2 v(b^*;q_2) 
= -\tfrac{1}{2} (b^*)^2 \sigma^2  V''(b^*) 
= 0,
\end{align*}
where the penultimate equality follows from adding together the ODEs \eqref{xBVPaa} for $i=1,2$ and the latter by \eqref{sfsdfdsas}. We conclude that $\phi(x) \geq 0$ for all $x \in [b^*,\infty)$.
\end{proof} 

\begin{figure}[tb] \label{fig1}
 \subfigure[]{\includegraphics[width=0.5\textwidth]{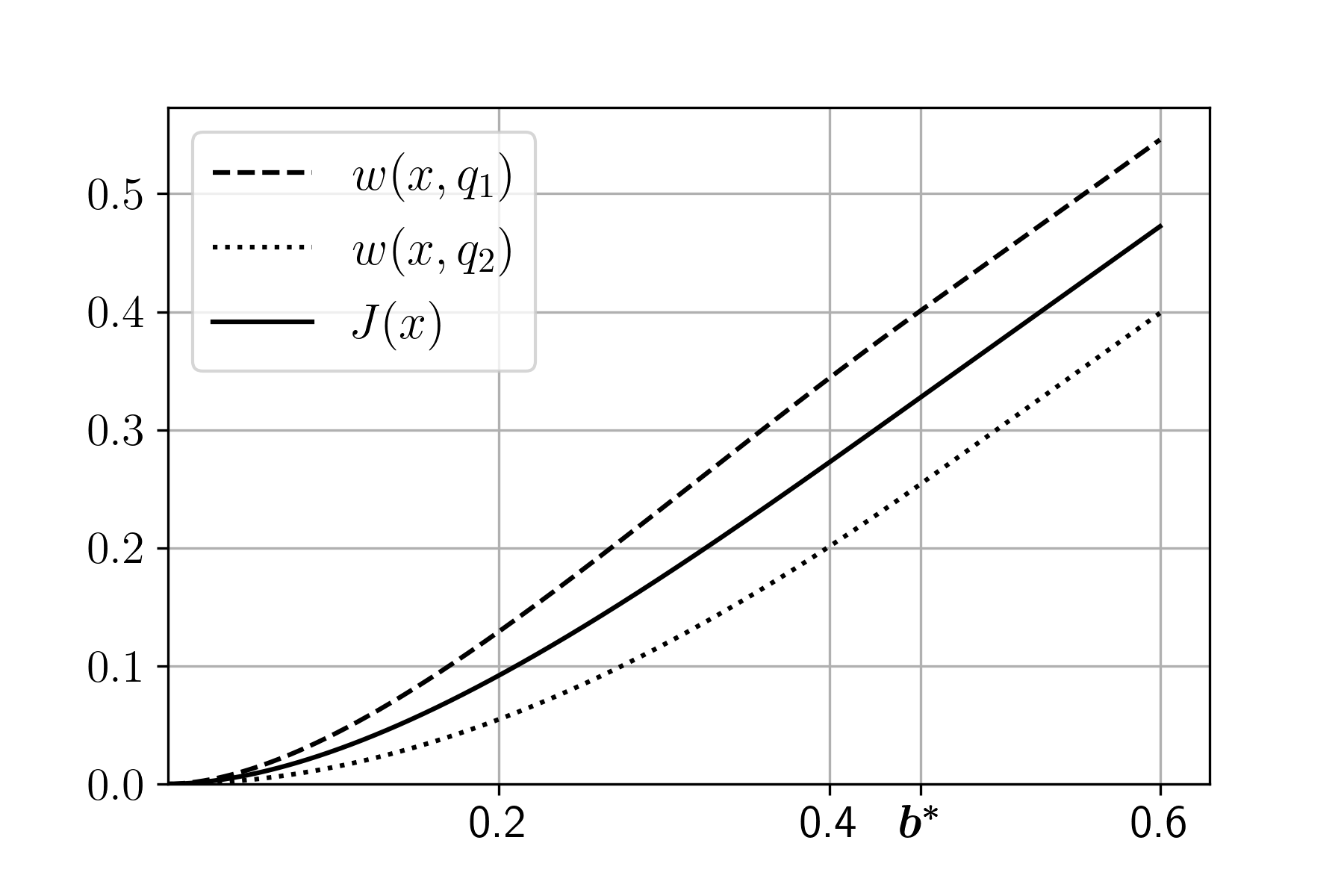}\label{fig:purework1}}
\subfigure[]{\includegraphics[width=0.5\textwidth]{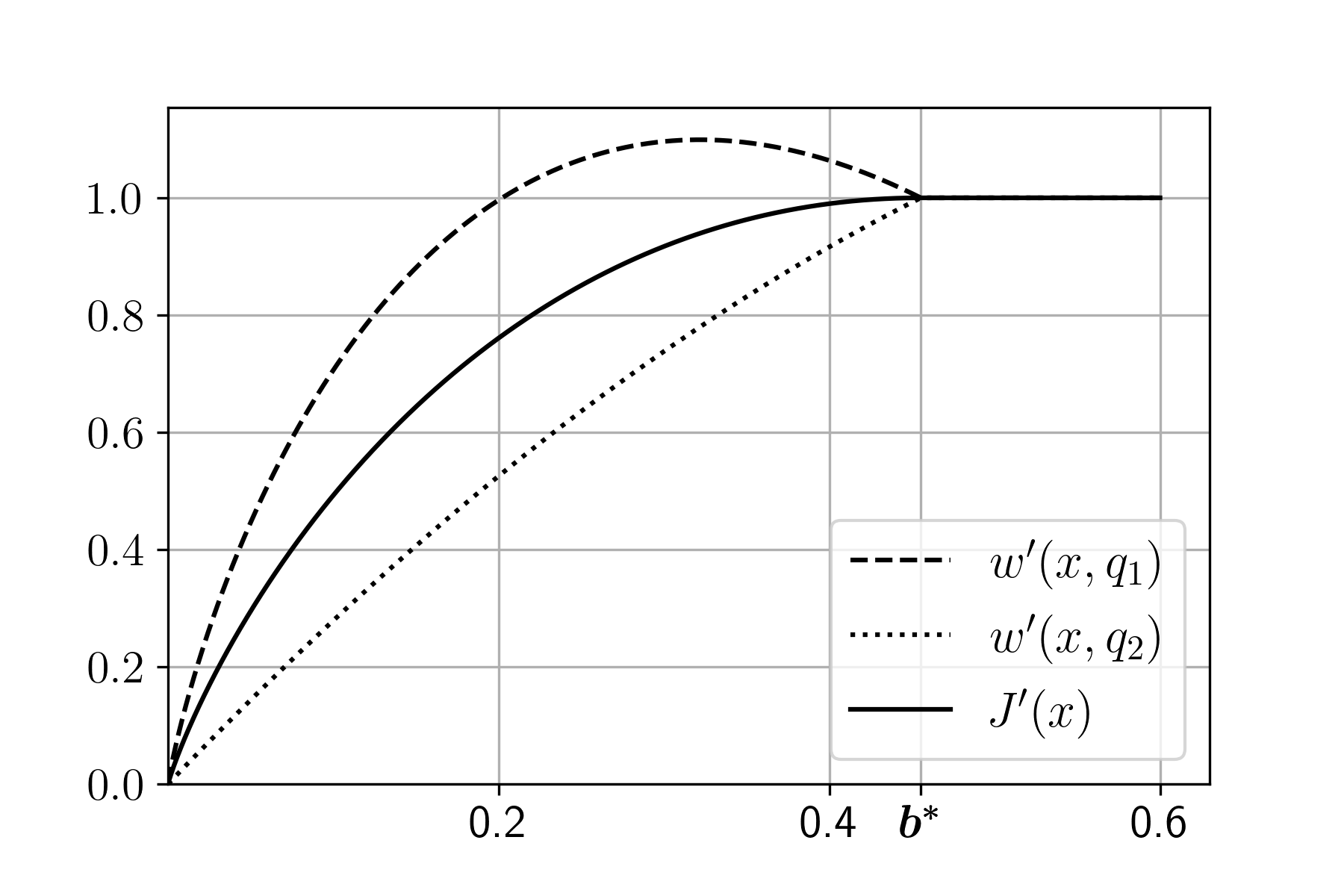}\label{fig:purederwork1}}

\vspace{-7.5mm}
\subfigure[]{\includegraphics[width=0.5\textwidth]{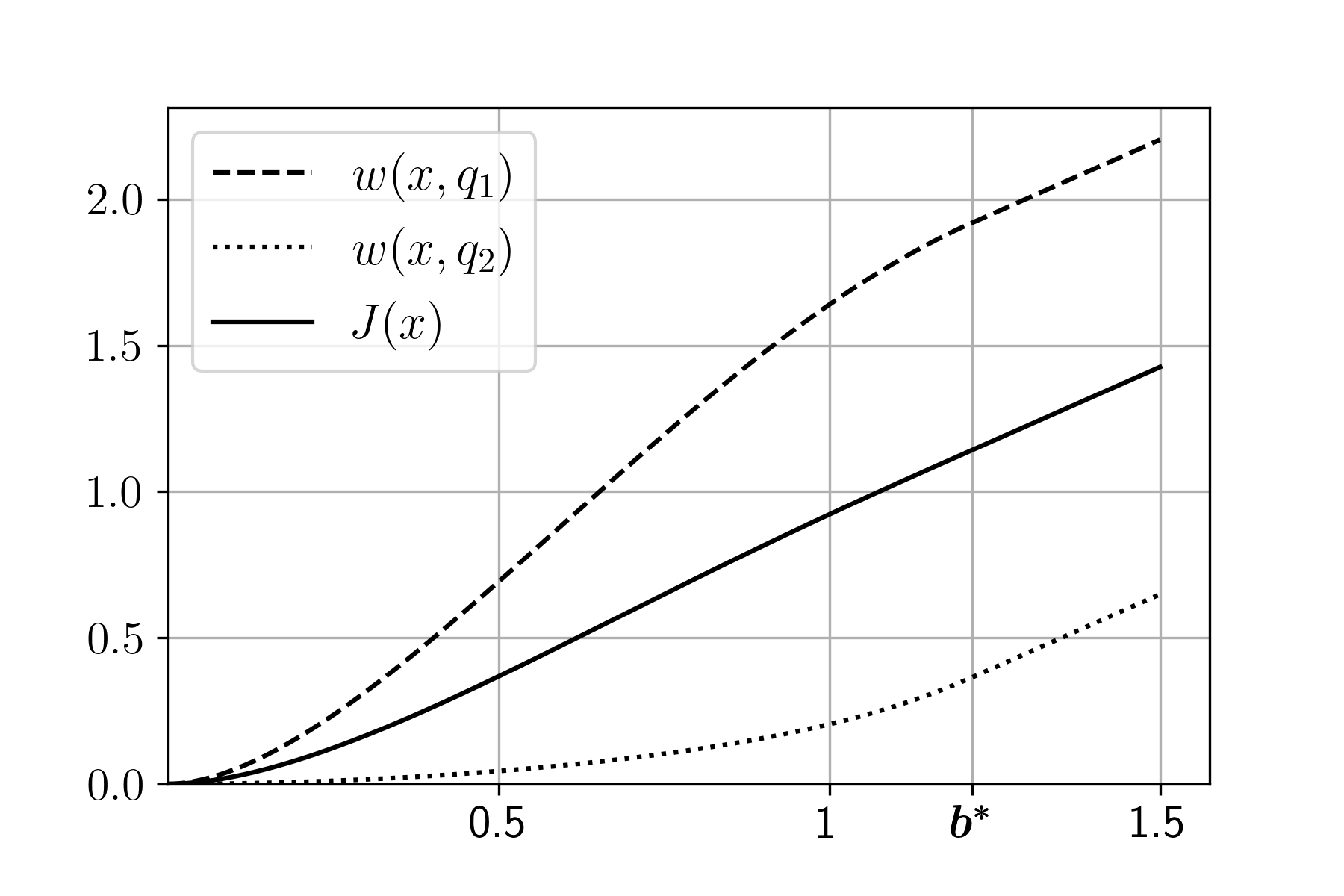}\label{fig:purenotwork1}}
\subfigure[]{\includegraphics[width=0.5\textwidth]{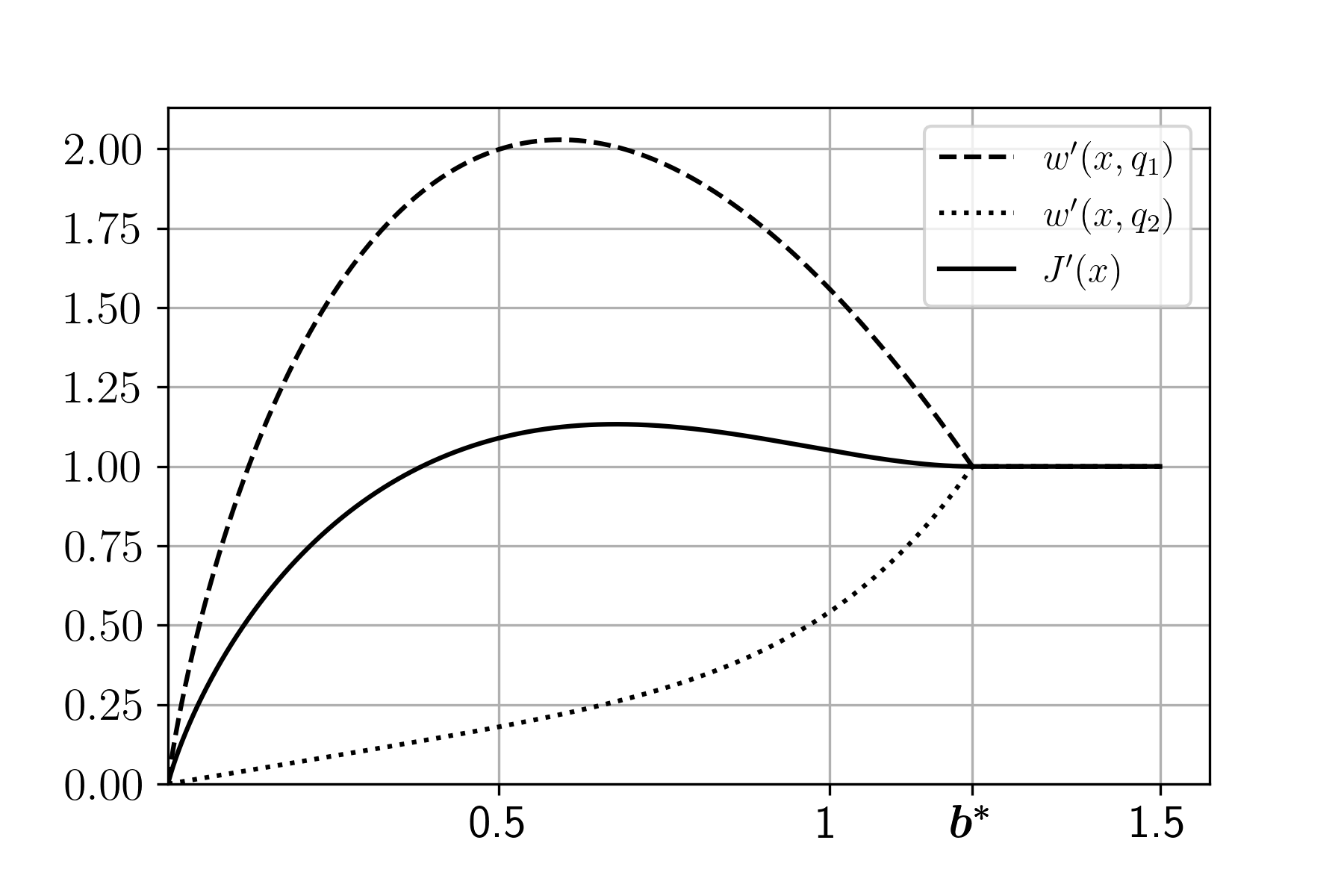}\label{fig:purdernotworke1}}
\vspace{-5mm}
\caption{Candidate strong threshold control equilibrium cost criteria $w$ and $J$. 
In panels~(a)-(b) the cost criterion $J$ and its derivative are illustrated for parameter values $\sigma^2=0.16$, $q_1=0.2$, $q_2=0.4$, and satisfy $J'\leq 1$ in the waiting region. 
In panels~(c)-(d) they are illustrated for parameter values $\sigma^2=0.16$, $q_1=0.2$, $q_2=3$, and we observe that this cannot lead to an equilibrium since $J'(x)>1$ for some $x$ in the waiting region.} 
\end{figure}

\subsubsection{Large discount rate difference: No strong threshold control equilibrium strategy} 
\label{sec:Equib2}

In this section we expand our analysis by showing that there is no equilibrium in the form of strong threshold control strategies when the difference $q_2-q_1$ in the possible discount rates is sufficiently large. This follows again by an application of our general Theorem \ref{main_thm}.

\begin{theorem}\label{thm:no-pure-NE} 
For each $q_1$, there exits a constant $h>0$ such that if 
$q_2-q_1\geq h$, then there is no equilibrium strategy $(0,[b,\infty))$ for the time-inconsistent singular control problem defined by \eqref{TISC} resulting from a strong threshold control strategy $(0,[b,\infty),0)$, $b \in (0,\infty)$. 
\end{theorem}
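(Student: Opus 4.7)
The plan is to argue by contradiction, using the necessity half of Theorem~\ref{main_thm} together with Corollary~\ref{coq_beta} and Proposition~\ref{non_ex_equilibrum}.(ii). Suppose, for some $b\in(0,\infty)$, the strong threshold strategy $(0,[b,\infty),0)$ yields an equilibrium $(0,[b,\infty))$. As in Section~\ref{sec:inform-strong}, the associated BVP \eqref{xBVPaa}--\eqref{xBVPad} (with $b$ in place of $b^{*}$) admits the explicit solution $v(x;q_i)=A_i x^{\gamma_i}+x^{2}/(2(q_i-\sigma^{2}))$ on $(0,b)$, the integrability condition \eqref{hlq3hjl} is routine, and the reflected SDE is well-posed, so Theorem~\ref{main_thm} applies and the necessary conditions \eqref{mainthmcond1}--\eqref{mainthmcond3} must hold for $V=\tfrac12 v(\cdot;q_1)+\tfrac12 v(\cdot;q_2)$.

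The first key step is to show that the smooth-fit condition pins down $b$ as the explicit threshold $b^{*}$ from \eqref{xhat_def}. By Corollary~\ref{coq_beta}, $V\in\mathcal{C}^{2}(0,\infty)$; since $V$ is affine on $[b,\infty)$ we have $V''(b+)=0$, and $\mathcal{C}^{2}$-regularity therefore forces $V''(b-)=0$. This is precisely the equation \eqref{sfsdfdsas} with $b$ replacing $b^{*}$; substituting the explicit $A_i$ from \eqref{Ai} reduces it to a single linear relation in $1/b$ whose unique positive solution is exactly \eqref{xhat_def}. Hence $b=b^{*}$.

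The second step is to invoke Proposition~\ref{non_ex_equilibrum}.(ii): there exists $h=h(q_1)>0$ such that if $q_{2}-q_{1}\geq h$, then $V'(x)>1$ on some left-neighbourhood $(b^{*}-\epsilon,b^{*})$. Since $u_\beta^{*}\equiv 0$ gives $\mathcal{W}_{0}=(0,b^{*})$, this interval is contained in $\mathrm{int}(\mathcal{W}_{0})$, which directly contradicts condition \eqref{mainthmcond1} of Theorem~\ref{main_thm}. The contradiction completes the proof.

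The substantive analytic work already sits in Proposition~\ref{non_ex_equilibrum}.(ii), whose sign-change analysis of $V'''$ near $b^{*}$, carried out through the asymptotics of the auxiliary function $H(q_{2};q_{1})$ in \eqref{V'''b}, is what produces the quantitative threshold~$h$. The present argument is therefore a short necessity deduction, and the only subtlety to watch is uniqueness of $b$ from the smooth-fit equation, which is transparent from the explicit form of $V$ on $(0,b)$.
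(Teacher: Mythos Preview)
Your proposal is correct and follows essentially the same approach as the paper's proof: both verify the hypotheses of Theorem~\ref{main_thm} for an arbitrary strong threshold $b$, invoke Corollary~\ref{coq_beta} to force $V''(b-)=0$ and thereby identify $b=b^*$ via \eqref{sfsdfdsas}--\eqref{xhat_def}, and then apply Proposition~\ref{non_ex_equilibrum}.(ii) to violate condition \eqref{mainthmcond1}. Your observation that substituting \eqref{Ai} into \eqref{sfsdfdsas} yields a linear equation in $1/b$ makes the uniqueness of $b^*$ explicit, which the paper leaves implicit by referring back to Section~\ref{sec:inform-strong}.
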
 
\begin{proof}
We firstly notice that the limiting cost criterion $w(\cdot;q_i,0,[b,\infty))$ associated with a strong threshold control strategy $(0,[b,\infty),0)$, for an arbitrary $b \in (0,\infty)$, exists and is given by the solution $v(\cdot;q_i)$ of the BVP \eqref{xBVPaa}--\eqref{xBVPad} (cf.~Lemma \ref{useful-lemma}) with $b$ in place of $b^*$. 
This solution $v$ is constructed by \eqref{w_pure_def}--\eqref{Ai} with $b$ in place of $b^*$ and satisfies the conditions of Lemma \ref{useful-lemma-2}.

Now, the (verification) Theorem \ref{main_thm} implies that $(0,[b,\infty))$ is an equilibrium strategy if and only if the corresponding $V$ (constructed as in \eqref{J_pure_def}) satisfies conditions \eqref{mainthmcond1}--\eqref{mainthmcond3}, and consequently $V\in \mathcal{C}^2(0,\infty)$ thanks to Corollary \ref{coq_beta}. 
We know from Section \ref{sec:inform-strong} that there exists a unique $b$ that satisfies this regularity condition (cf.~\eqref{sfsdfdsas}) and is precisely given by $b^*$ in \eqref{xhat_def}. 
Hence, $(0,[b^*,\infty))$ is the only possible equilibrium in the form of strong threshold control strategies.  

However, we can conclude from the result in Proposition \ref{non_ex_equilibrum}.(ii) that there exits, for each $q_1$,  a constant $h>0$, such that if 
$q_2-q_1 \geq h$, then $V'(x)>1$ for $x\in (b^*-\epsilon, b^*)$ and a sufficiently small $\epsilon>0$. This violates condition \eqref{mainthmcond1}, hence $(0,[b^*,\infty))$ cannot be an equilibrium strategy, which completes the proof.
\end{proof}

Motivated by the result of Theorem \ref{thm:no-pure-NE}  that no strong threshold control strategy can yield an equilibrium when the difference $q_2-q_1$ is sufficiently large, we proceed in the following section with the search for an equilibrium strategy in the form of mild threshold control strategies.

\subsection{Case study with mild threshold control equilibrium strategy} 
\label{sec:mixed-case}
Recall from Definition \ref{def:mixed0} that a mild threshold control strategy results in a controlled process $X^D$ given by the unique strong solution to the SDE \eqref{Xubeta} (cf.~Section \ref{sec:mild}), which under the dynamics of \eqref{example_GBM} becomes
$$
X_t^{D} = 
x - \int_0^t u_\beta(X_s^{D}) \, ds + \int_0^t \sigma X_s^{D} \, dW_s - \mathbf{1}_{\{x \geq \beta\}}(x-\beta + \delta), 
\quad  0 \leq t \leq \tau^D_0.
$$
We consider the following specification with constants $\ub \in (0,\beta)$ and $\delta \in (0,\beta)$, and a c\`adl\`ag function 
$$
u_{\ub,\beta}(x) = u_{\ub,\beta}(x) {\bf 1}_{\{x \in [\ub,\beta)\}}, 
\quad \text{such that} \quad 
u_{\ub,\beta}(x) > 0, \; x \in [\ub,\beta)
\quad \text{and} \quad 
\lim_{x \uparrow \beta} u_{\ub,\beta}(x) = \infty,
$$
such that the state-space splits into:
\begin{itemize}
\vspace{-1mm}
\item Waiting region $\mathcal{W}=(0,\ub)$, where $X^D$ is not controlled, i.e.~$u_{\ub,\beta}(x)=0$, $x\in(0,\ub)$; 

\vspace{-1mm}
\item Mild action region $\mathcal{M}=[\ub,\beta)$, where $X^D$ is controlled by an absolutely continuous control with a rate $u_{\ub,\beta}(x)>0$, $x\in [\ub, \beta)$, such that $\beta$ is an entrance-not-exit boundary for~$X^D$; 

\vspace{-1mm}
\item Strong action region $\mathcal{S}=[\beta,\infty)$, from where $X^D$ jumps instantly downwards to $\beta - \delta$. 
\end{itemize}
\vspace{-1mm}
Our ansatz in this section is to search for an equilibrium strategy (Definition \ref{def:equ_stop_time}) of the type $(u_{\ub,\beta}, [\beta,r))$ -- for which we recall that $\delta$ is sent to zero in the sense of the limiting cost criterion \eqref{eq:limiting-cost-cr}.

In the forthcoming analysis, we firstly construct our candidate mild threshold control equilibrium strategy in Section \ref{sec:inform-mild}. 
Subsequently, we prove in Section \ref{sec:Equiu}, by relying on our general verification theorem, that the constructed candidate is indeed an equilibrium strategy in the case study when the difference between the two possible discount rates $q_2$ and $q_1$ is sufficiently large; see Theorem \ref{ex_mix_theorem}.

\subsubsection{Construction of a candidate equilibrium via a mild threshold control strategy}
\label{sec:inform-mild}

In light of Theorem \ref{main_thm}, we derive our candidate equilibrium strategy $(u_{\ub^*,\beta^*},[\beta^*,\infty))$ with associated values $v(\cdot;q_i)$, $i=1,2$, so that it solves, cf.~Lemma \ref{useful-lemma}, the BVP
\begin{align}
&\tfrac12 x^2 + \tfrac12 \sigma^2 x^2 v''(x;q_i) - q_i v(x;q_i) = 0, \qquad \qquad \qquad \qquad \qquad \qquad \qquad \;\, x \in (0,\ub^*), 
\label{XBVPaa}\\
&\tfrac12 x^2 + \tfrac12 \sigma^2 x^2 v''(x;q_i) - u_{\ub^*,\beta^*}(x) v'(x;q_i) - q_i v(x;q_i) + u_{\ub^*,\beta^*}(x) = 0, \quad x \in (\ub^*,\beta^*), 
\label{XBVPaaa}\\
&v(x;q_i) = x - \beta^* + v(\beta^*;q_i), \qquad \qquad \qquad \qquad \qquad \qquad \qquad \qquad \qquad \; x \in [\beta^*,\infty), 
\label{XBVPab}\\
&v(0;q_i) = 0, 
\label{XBVPac}\\
& \text{$v(\cdot;q_i)\in \mathcal{C}^2\big( (0,\infty) \setminus \{\ub^*, \beta^*\} \big) \cap \mathcal{C}^1(0,\infty)$ with $|v''(x\pm;q_i)|<\infty$ for $x\in\{\ub^*,\beta^*\}$}.
\label{XBVPad}
\end{align} 
and by imposing that (cf.~\eqref{J=J}) 
\begin{equation} \label{J_mild_def}
V(x) := \tfrac12 \, v(x; q_1) + \tfrac12 \, v(x; q_2), \quad x \in [0,\infty),
\end{equation} 
satisfies condition \eqref{mainthmcond2} of Theorem \ref{main_thm}, namely 
\begin{align} \label{eq_ex_1} 
V'(x) = \tfrac{1}{2} v'(x;q_1) + \tfrac{1}{2} v'(x;q_2) &= 1, \quad x \in (\ub^*, \beta^*), 
\end{align}
where we used the fact that $(\ub^*,\beta^*) \subset [\ub^*,\beta^*) = \overline{\mathcal{M}}_{u} \setminus \{\beta^*\}$.
To that end, we obtain the following result. 

\begin{proposition} \label{thm:mildvalue} 
Define the constants $A_i,a_i,b_i,c_i$, for $i=1,2$, $\ub^*$ and $\beta^*$ by 
\begin{align}
A_i &:= \Big(a_i \ub^* -\frac{\ub^*}{q_i-\sigma^2} + b_i \Big) \gamma_i^{-1} (\ub^*)^{1-\gamma_i}, \quad a_i := \frac{2 (-1)^i}{q_2-q_1}, \quad 
b_i := \frac{2 q_{3-i} (-1)^{3-i}}{q_2-q_1}, \label{Aiaibi}\\
c_i &:= \frac{\tfrac12 (\ub^*)^2 \big(1 + (\sigma^2 - q_i) a_i -(-1)^i \sigma^2 (\gamma_1 - 2) \big(a_1 -\frac{1}{q_1-\sigma^2}\big) \big) - \ub^* \big((-1)^i \tfrac12 \sigma^2 (\gamma_1 - 1) b_1 + q_i b_i\big)}{q_i}, \label{ci}\\
\ub^* &:=\frac{(\gamma_1-1)b_1+ (\gamma_2-1)b_2}{(2-\gamma_1) \big(a_1-\frac{1}{q_1-\sigma^2}\big) + (2-\gamma_2) \big(a_2-\frac{1}{q_2-\sigma^2}\big)} 
\quad \text{and} \quad 
\beta^* := \tfrac{1}{2} (q_1 + q_2).\label{ubbeta}
\end{align}
and the function
\begin{equation} \label{u*}
u_{\ub^*,\beta^*}(x) 
:= \frac{\tfrac12 \big(1 + (\sigma^2 - q_1) a_1 \big) x^2 - q_1 b_1 x - q_1 c_1}{a_1 x + b_1 - 1} {\bf 1}_{\{x \in [\ub^*,\beta^*)\}}, \quad x \in (0,\beta^*).
\end{equation}
If $0 < \ub^* < \beta^*$ holds true in \eqref{ubbeta}, then $u_{\ub^*,\beta^*}(x)$ given by \eqref{u*} together with
\begin{align}\label{candv}
v(x;q_i)= \begin{cases}
A_i x^{\gamma_i}+\frac{1}{2(q_i-\sigma^2)} x^2, & x\in (0,\ub^*), \\
\frac{1}{2} a_i x^2 + b_i x + c_i, 
& x \in [\ub^*,\beta^*), \\
x - \beta^* + v(\beta^*;q_i), & x \in [\beta^*,\infty),
\end{cases}
\end{align} 
solve the BVP \eqref{XBVPaa}--\eqref{XBVPad} (where $\gamma_i$ is defined by \eqref{w_pure_def} for $i=1,2$). 
In this case, also \eqref{J_mild_def}--\eqref{eq_ex_1} hold true. 
\end{proposition}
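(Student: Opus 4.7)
My plan is to verify, piece-by-piece, that the piecewise ansatz \eqref{candv} together with the control rate $u_{\ub^*,\beta^*}$ from \eqref{u*} satisfy every clause of the BVP \eqref{XBVPaa}--\eqref{XBVPad}, as well as \eqref{J_mild_def} and \eqref{eq_ex_1}. The assumption $0<\ub^*<\beta^*$ ensures the three intervals $(0,\ub^*)$, $[\ub^*,\beta^*)$, $[\beta^*,\infty)$ are non-degenerate. Relation \eqref{J_mild_def} is a definition, and \eqref{eq_ex_1} reduces on $(\ub^*,\beta^*)$ to $V'(x)=\tfrac12(a_1+a_2)x+\tfrac12(b_1+b_2)=1$, which holds via the identities $a_1+a_2=0$ and $b_1+b_2=2$ that are immediate from \eqref{Aiaibi}. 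On $(0,\ub^*)$, the function $v(x;q_i)=A_ix^{\gamma_i}+x^2/(2(q_i-\sigma^2))$ is by direct substitution a solution of the Euler-type ODE \eqref{XBVPaa} (using the characteristic identity $\tfrac12\sigma^2\gamma_i(\gamma_i-1)=q_i$), exactly as in \eqref{w_pure_def}, the other fundamental mode being excluded by \eqref{XBVPac}. On $[\beta^*,\infty)$, condition \eqref{XBVPab} holds by the very definition of $v$ in \eqref{candv}.

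For the middle piece on $(\ub^*,\beta^*)$, substituting the quadratic ansatz into \eqref{XBVPaaa} and isolating the control rate via $u(v'-1)=\tfrac12 x^2+\tfrac12\sigma^2 x^2 v''-q_iv$ yields, for $i=1$, precisely \eqref{u*}. I then verify that the $i=2$ instance produces the same function: since $a_2=-a_1$ and $b_2-1=-(b_1-1)$, the denominators flip sign, so equality of numerators reduces to the three scalar identities $q_1a_1+q_2a_2=2$, $q_1b_1+q_2b_2=0$, and $q_1c_1+q_2c_2=0$. The first two are read off \eqref{Aiaibi}; the third follows by direct summation of \eqref{ci}, where the $(-1)^i$-asymmetric terms cancel, the coefficient of $(\ub^*)^2$ reduces to $2+\sigma^2(a_1+a_2)-(q_1a_1+q_2a_2)=0$, and the coefficient of $\ub^*$ reduces to $q_1b_1+q_2b_2=0$. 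The remaining content of \eqref{XBVPad} is the $C^1$-matching at $\ub^*$ and $\beta^*$. At $\beta^*$, $v'(\beta^*-;q_i)=a_i\beta^*+b_i=1$ gives, for either $i$, $\beta^*=(q_1+q_2)/2$, matching \eqref{ubbeta}, and continuity of $v$ there is automatic. At $\ub^*$, $C^1$-matching $A_i\gamma_i(\ub^*)^{\gamma_i-1}+\ub^*/(q_i-\sigma^2)=a_i\ub^*+b_i$ reproduces the formula for $A_i$ in \eqref{Aiaibi}, and $C^0$-matching yields $c_i=A_i(\ub^*)^{\gamma_i}+\tfrac12(\ub^*)^2/(q_i-\sigma^2)-\tfrac12a_i(\ub^*)^2-b_i\ub^*$.

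The main obstacle I anticipate is showing that this $C^0$-match expression for $c_i$ coincides with \eqref{ci}. For $i=1$, substituting the formula for $A_1$ and invoking the characteristic identity $\sigma^2(\gamma_1-1)=2q_1/\gamma_1$ rearranges the $C^0$-match exactly into \eqref{ci}; this is mechanical but lengthy algebra. For $i=2$, the $C^0$-match naturally involves $\gamma_2,a_2,b_2$ while \eqref{ci} uses $\gamma_1,a_1,b_1$, so their coincidence imposes a constraint on $\ub^*$. Since the $C^0$-match for $i=1$ always agrees with \eqref{ci}, and since the formulas in \eqref{ci} satisfy $q_1c_1+q_2c_2=0$ automatically, the $i=2$ agreement reduces to showing that the $C^0$-matched values of $c_1$ and $c_2$ also satisfy $q_1c_1+q_2c_2=0$. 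Summing them and using $q_i=\tfrac12\sigma^2\gamma_i(\gamma_i-1)$, $q_1a_1+q_2a_2=2$, $q_1b_1+q_2b_2=0$, and $a_1+a_2=0$, this collapses to $\ub^*\sum_i(2-\gamma_i)(a_i-1/(q_i-\sigma^2))=\sum_i(\gamma_i-1)b_i$, which is exactly \eqref{ubbeta}. Equivalently, the same equation arises from enforcing $V''(\ub^*-)=V''(\ub^*+)=0$, where $V''(\ub^*+)=\tfrac12(a_1+a_2)=0$ is automatic. Once this consistency is in hand, all matching and ODE conditions are verified, completing the proof.
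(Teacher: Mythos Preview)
Your proposal is correct. It differs from the paper's proof mainly in organisation: the paper first \emph{derives} the constants in Steps~1--6 (assuming a solution exists) and then in Step~7 verifies only the residual condition, namely continuity of $v(\cdot;q_i)$ at $\ub^*$; you instead verify all conditions directly against the given formulas. The mathematical core is the same: the defining equation \eqref{ubbeta} for $\ub^*$ is precisely what makes the two pieces match at $\ub^*$, which in the paper appears as the consistency of the two expressions for $u_{\ub^*,\beta^*}(\ub^*)$ in \eqref{loweq_x_eq} (equivalently $V''(\ub^*-)=V''(\ub^*+)=0$), and in your argument appears as the identity $q_1 c_1^{\text{match}}+q_2 c_2^{\text{match}}=0$. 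One stylistic difference worth noting: the paper's Step~7 verifies continuity at $\ub^*$ indirectly by evaluating the ODEs \eqref{XBVPaa}--\eqref{XBVPaaa} on either side and using \eqref{loweq_x_eq}, which avoids explicitly comparing the $C^0$-matched $c_i$ with \eqref{ci}; your direct algebraic comparison is longer but equally valid, and your observation that the $i=1$ case matches \eqref{ci} without invoking \eqref{ubbeta} (so that the constraint falls entirely on $i=2$, reducing to \eqref{ubbeta}) is a clean way to close the loop.
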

\begin{proof} 
We prove this proposition as follows:
Assuming that a solution to the BVP \eqref{XBVPaa}--\eqref{XBVPad} satisfying \eqref{J_mild_def}--\eqref{eq_ex_1} exists, we show that it has to be of the form specified by \eqref{Aiaibi}--\eqref{candv}; see Steps 1--6. 
In Step 7, we conclude the proof by verifying that if  $0 < \ub^* < \beta^*$ holds true, then our proposed solution given by \eqref{Aiaibi}--\eqref{candv} does indeed satisfy the BVP \eqref{XBVPaa}--\eqref{XBVPad} as well as \eqref{J_mild_def}--\eqref{eq_ex_1}.

\vspace{1mm}
{\it Step 1. Case $x \in (\ub^*,\beta^*)$}.
Recall the function $V$ defined by \eqref{J_mild_def} and satisfying \eqref{eq_ex_1}, where the latter further implies that $V''(x)=0$. 
Using these in the sum of the ODEs in \eqref{XBVPaaa} for $i=1,2$, we obtain
\begin{align} \label{eq_ex_2b}
&x^2 + \sigma^2 x^2 V''(x) - 2 u_{\ub^*,\beta^*}(x) (V'(x) - 1) - q_1 v(x;q_1) - q_2 v(x;q_2) 
= x^2 - q_1 v(x;q_1) - q_2 v(x;q_2) = 0, 
\end{align}
which in turn implies that 
\begin{align} \label{eq_ex_2}
q_1 v'(x;q_1) + q_2 v'(x;q_2) = 2x , \quad x \in (\ub^*, \beta^*). 
\end{align}
Viewing \eqref{eq_ex_1} and \eqref{eq_ex_2} as an equation system for $v'(x;q_1)$ and $v'(x;q_2)$, yields that 
\begin{align} \label{eq_ex_3}
v'(x;q_i) = a_i \, x + b_i, \quad x \in (\ub^*, \beta^*), \quad i=1,2, 
\end{align}
where $a_i$ and $b_i$ are defined by \eqref{Aiaibi}. 
It thus follows that 
\begin{align}\label{hq2hkl12r}
v(x;q_i) = \tfrac{1}{2} a_i x^2 + b_i x + c_i, 
\quad x \in [\ub^*,\beta^*), \quad i=1,2, 
\end{align}
for some constants $c_i$, $i=1,2$, to be determined (later on). 

\vspace{1mm}
{\it Step 2. Case $x \in (0,\ub^*)$}.
Now, the combination of the general solution to the ODE in \eqref{XBVPaa} with the condition \eqref{XBVPac} implies that 
\begin{align}\label{w_mild_def}
v(x;q_i)=A_i x^{\gamma_i}+\frac{1}{2(q_i-\sigma^2)} x^2, \quad x\in (0,\ub^*), 
\quad \text{where } \gamma_i \text{ is defined by \eqref{w_pure_def} for $i=1,2$},
\end{align}
and the constants $A_i$, $i=1,2$, are given by \eqref{Aiaibi}. 
To see the latter, we use the continuous differentiability of $v$ from \eqref{XBVPad} and \eqref{eq_ex_3}, which yield that $v'(\ub^*-;q_i) = v'(\ub^*+;q_i) = a_i \, \ub^* + b_i$, and combine it with the derivative of \eqref{w_mild_def}. 

\vspace{1mm}
{\it Step 3. Deriving $\beta^*$.}  
Using the continuous differentiability of $v$ from \eqref{XBVPad}, but now with \eqref{XBVPab}, yields that $v'(\beta^*-;q_i) = v'(\beta^*+;q_i) = 1$, hence \eqref{eq_ex_2} implies the expression of $\beta^*$ in \eqref{ubbeta}.

\vspace{1mm}
{\it Step 4. The control rate $u_{\ub^*,\beta^*}(x)$ for all $x \in [\ub^*,\beta^*)$}.
It follows from a rearrangement of the ODE \eqref{XBVPaaa} for $i=1,2$, that 
\begin{equation} \label{u*-prel}
u_{\ub^*,\beta^*}(x) 
= \frac{\tfrac12 x^2 + \tfrac12 \sigma^2 x^2 v''(x;q_1) - q_1 v(x;q_1)}{v'(x;q_1) - 1} 
= \frac{\tfrac12 x^2 + \tfrac12 \sigma^2 x^2 v''(x;q_2) - q_2 v(x;q_2)}{v'(x;q_2) - 1} 
, \quad x \in (\ub^*,\beta^*), 
\end{equation}
where the second equality follows from \eqref{Aiaibi} and the expression \eqref{hq2hkl12r} of $v(\cdot;q_i)$ on $(\ub^*,\beta^*)$ for $i=1,2$, which imply that 
\begin{align*}
&v'(x;q_1) - 1 
= a_1 \, x + b_1 - 1 
= 1 - a_2 \, x - b_2
= 1 - v'(x;q_2), \\
&\tfrac12 x^2 - q_1 v(x;q_1) 
= \tfrac12 (1 - q_1 a_1) x^2 - q_1 (b_1 x + c_1) 
= \tfrac12 (q_2 a_2 - 1) x^2 + q_2 (b_2 x + c_2) 
= q_2 v(x;q_2) - \tfrac12 x^2, \\
&v''(x;q_1) = a_1 = - a_2 = - v''(x;q_2).
\end{align*}
By substituting in \eqref{u*-prel} the expression \eqref{hq2hkl12r} of $v$, we obtain \eqref{u*}.

\vspace{1mm}
{\it Step 5. The constant $\ub^*$}.
By evaluating the ODEs \eqref{XBVPaa}--\eqref{XBVPaaa} at $\ub^*-$ and $\ub^*+$ respectively, and subtracting them for each $i=1,2$, we obtain thanks to the regularity of $v$ from \eqref{XBVPad} that
\begin{align} \label{eq:premub}
\tfrac12 \sigma^2 (\ub^*)^2 \big(v''(\ub^*+;q_i) - v''(\ub^*-;q_i) \big) + u_{\ub^*,\beta^*}(\ub^*) \big(1 - v'(\ub^*;q_i)\big) = 0 
, \quad i=1,2,
\end{align}
which thus implies that 
\begin{align}
\begin{split}
u_{\ub^*,\beta^*}(\ub^*) 
&= \frac{\tfrac12 \sigma^2 (\ub^*)^2 \big(v''(\ub^*+;q_1) - v''(\ub^*-;q_1) \big)}{v'(\ub^*;q_1) - 1} 
= \frac{\tfrac12 \sigma^2 (\ub^*)^2 \big(v''(\ub^*+;q_2) - v''(\ub^*-;q_2) \big)}{v'(\ub^*;q_2) - 1} \\
&\Leftrightarrow \quad 
v''(\ub^*+;q_1) - v''(\ub^*-;q_1) = v''(\ub^*-;q_2) - v''(\ub^*+;q_2) \\
&\Leftrightarrow \quad 
-(\gamma_1 - 2) \Big(a_1 -\frac{1}{q_1-\sigma^2}\Big) - (\gamma_1 - 1) \frac{b_1}{\ub^*} = (\gamma_2 - 2) \Big(a_2 -\frac{1}{q_2-\sigma^2}\Big) + (\gamma_2 - 1) \frac{b_2}{\ub^*} 
\end{split}
\label{loweq_x_eq}
\end{align}
where the first equivalence follows from \eqref{eq_ex_1} which implies that $v'(\ub^*;q_1) - 1 = 1 - v'(\ub^*;q_2)$, and the second by the expressions \eqref{Aiaibi}, \eqref{hq2hkl12r} and \eqref{w_mild_def}. Simple calculations then yield the expression of $\ub^*$ in \eqref{ubbeta}.

\vspace{1mm}
{\it Step 6. The constants $c_i$, $i=1,2$}. 
Using the ODEs in \eqref{XBVPaaa} for $i=1,2$ at $x=\ub^*+$, we obtain 
\begin{align*}
\tfrac12 (\ub^*)^2 + \tfrac12 \sigma^2 (\ub^*)^2 v''(\ub^*+;q_i) - u_{\ub^*,\beta^*}(\ub^*) (v'(\ub^*+;q_i) - 1) - q_i v(\ub^*+;q_i) = 0, \quad i=1,2,  
\end{align*}
which gives thanks to the expressions \eqref{hq2hkl12r} of $v(\cdot;q_i)$, for each $i=1,2$, that 
\begin{align*}
c_i 
&= \frac{\tfrac12 (\ub^*)^2 \big(1 + (\sigma^2 - q_i) a_i \big) - u_{\ub^*,\beta^*}(\ub^*) (a_i \ub^* + b_i - 1) - q_i b_i \ub^*}{q_i} 
, \quad i=1,2,
\end{align*}
where by substituting the expression of $u_{\ub^*,\beta^*}(\ub^*)$ from \eqref{loweq_x_eq}, we obtain \eqref{ci}, using also \eqref{hq2hkl12r}. 

\vspace{1mm}
{\it Step 7.} We are now ready to show that $v(\cdot;q_i)$ in 
\eqref{candv} and $u_{\ub^*,\beta^*}(\cdot)$ in \eqref{u*} satisfy the BVP \eqref{XBVPaa}--\eqref{XBVPad} as well as \eqref{J_mild_def}--\eqref{eq_ex_1}, when $0 < \ub^* < \beta^*$. 
Using the analysis in Steps 1--6, we observe that what remains to be proved is that $v(\cdot;q_i)$ satisfies the smoothness condition \eqref{XBVPad}.

It follows by construction (again, see Steps 1--6) that $v(\cdot;q_i)\in \mathcal{C}^2\big( (0,\infty) \setminus \{\ub^*, \beta^*\} \big) \cap \mathcal{C}^1\big( (0,\infty) \setminus \{\ub^*\} \big)$ and $v'(\ub^*-;q_i) = v'(\ub^*+;q_i)$, hence we only need to verify continuity at $\ub^*$. We have that 
\begin{align*}
q_i v(\ub^*+;q_i) 
&= \tfrac1{2} (\ub^*)^2 + \tfrac1{2} \sigma^2 (\ub^*)^2 v''(\ub^*+;q_i) + u_{\ub^*,\beta^*}(\ub^*) \big(1 - v'(\ub^*;q_i)\big) \\
&= \tfrac1{2} (\ub^*)^2 + \tfrac1{2} \sigma^2 (\ub^*)^2 v''(\ub^*-;q_i)
= q_i v(\ub^*-;q_i) , \quad i=1,2,
\end{align*}
where the equalities follow from \eqref{XBVPaaa}, \eqref{loweq_x_eq} and \eqref{XBVPaa}, respectively. This completes the proof.
\end{proof}

\begin{figure}[tb] \label{fig2}
\subfigure[]{\includegraphics[width=0.5\textwidth]{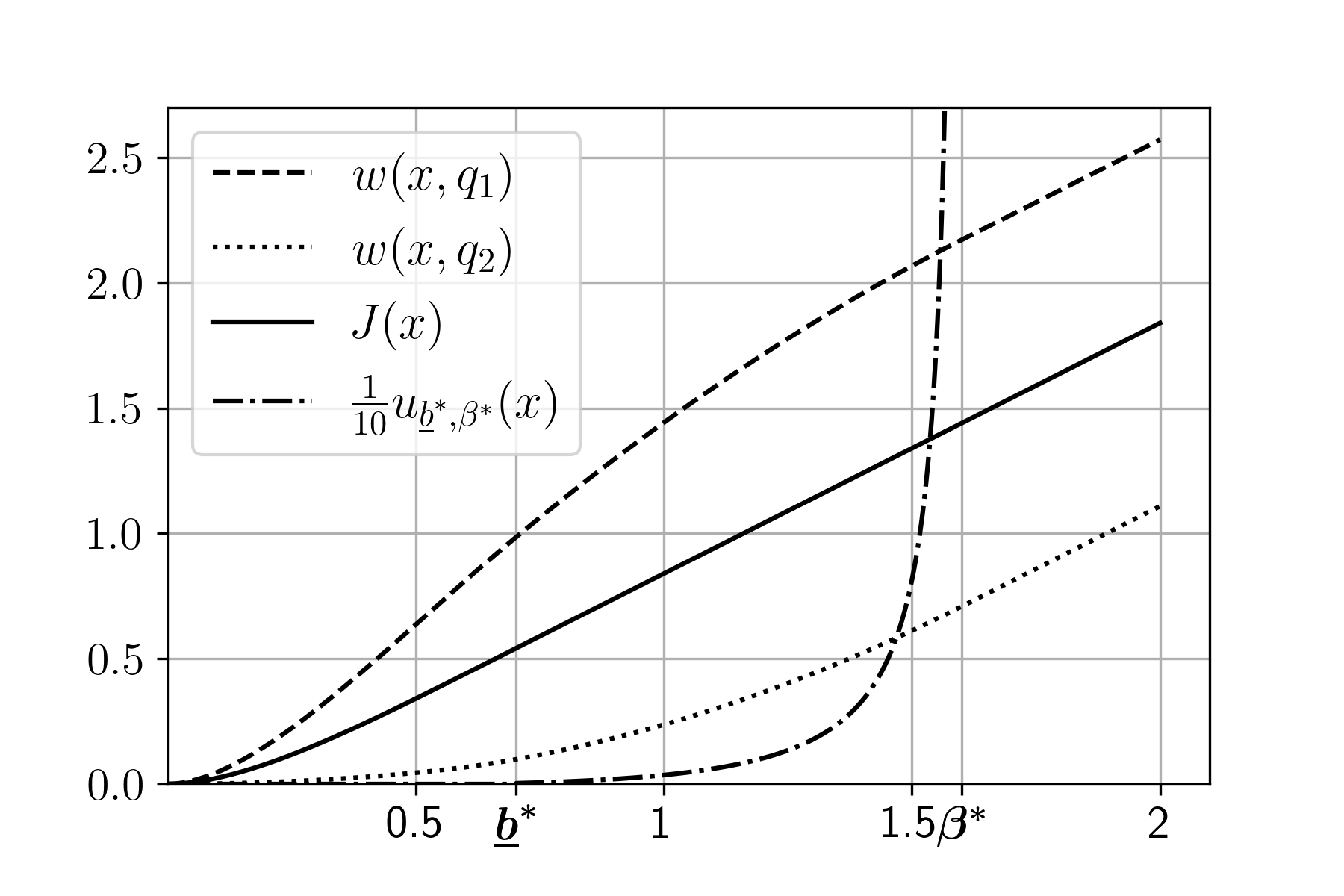}\label{fig:mixedwithu1}}
\subfigure[]{\includegraphics[width=0.5\textwidth]{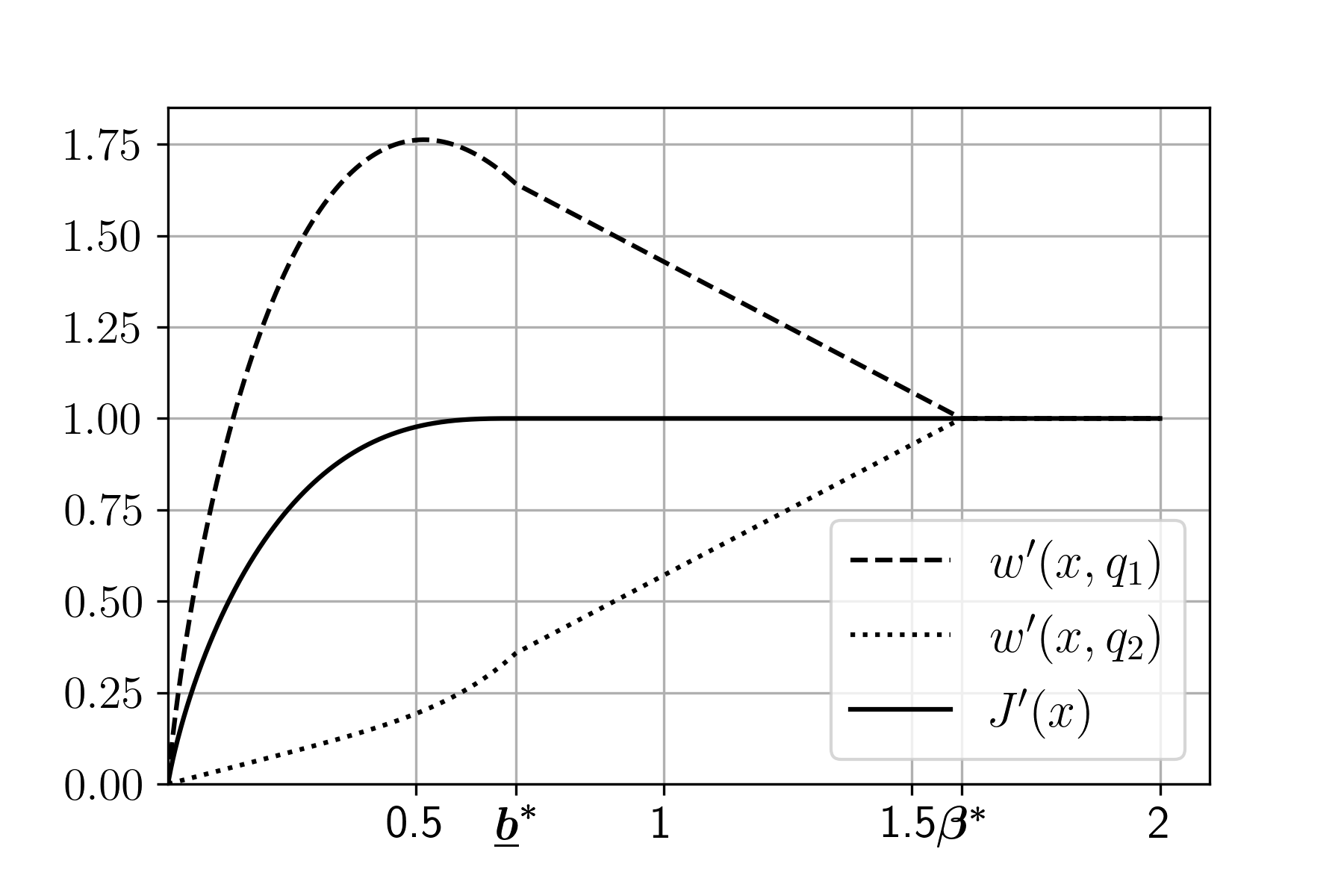}\label{fig:mixedder1}}
\vspace{-7.5mm}
\begin{center}
\subfigure[]{\includegraphics[width=0.5\textwidth]{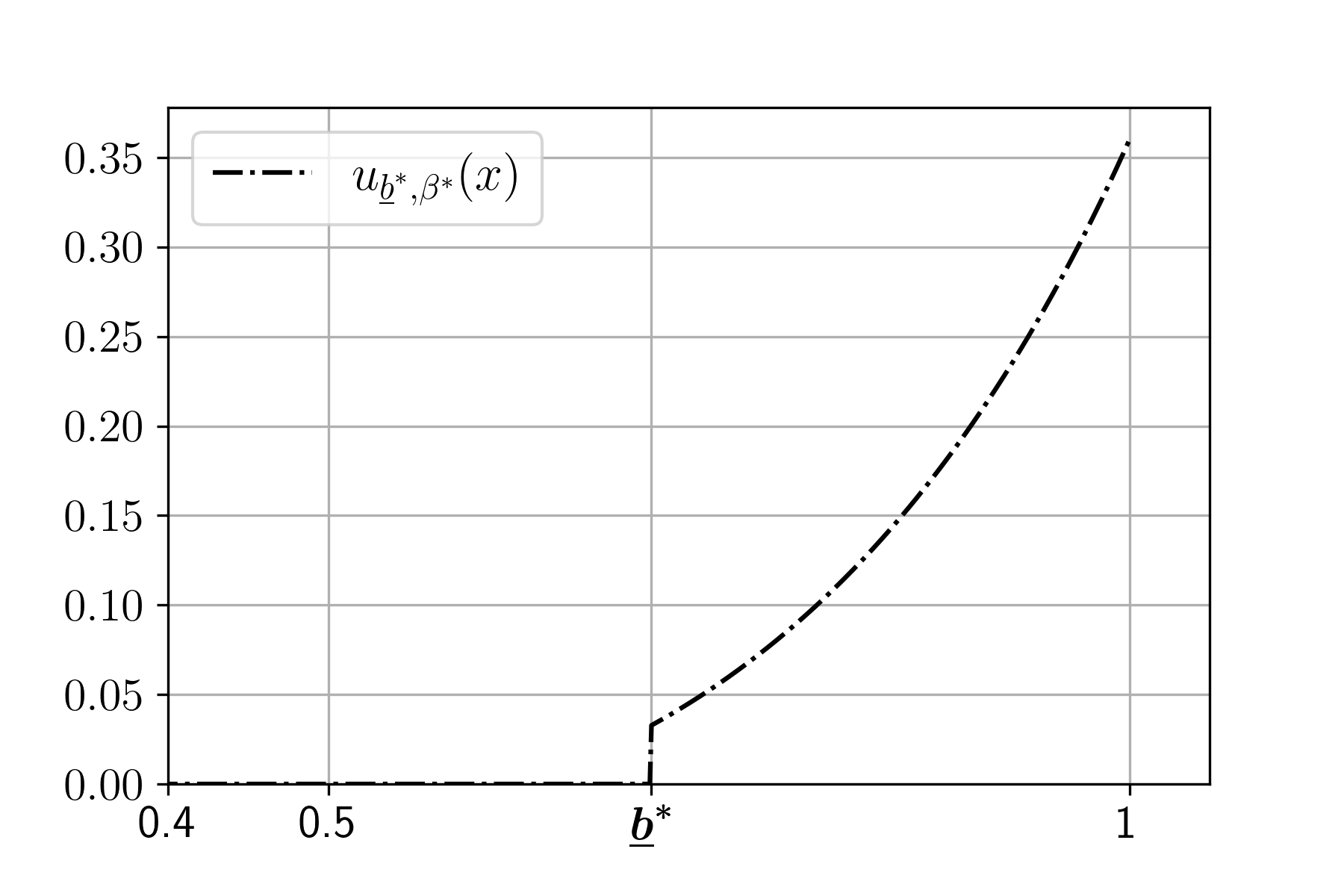}\label{fig:udetail}}
\end{center}
\vspace{-5mm}
\caption{Candidate mild threshold control equilibrium cost criteria $w$ and $J$. 
In panels~(a)-(b) the cost criterion $J$ and its derivative are illustrated for parameter values $\sigma^2=0.16$, $q_1=0.2$, $q_2=3$ (same as in Fig.~\ref{fig:purenotwork1}--\ref{fig:purdernotworke1}), and satisfy $J'\leq 1$ in the waiting region. 
Panel (a) also highlights the explosion point $\beta^*$ of $u_{\ub^*,\beta^*}$, while panel (c) provides a zoomed-in view of the discontinuity of $u_{\ub^*,\beta^*}$ at $\ub^*$.}
\end{figure}

\subsubsection{Mild threshold control equilibrium strategy}
\label{sec:Equiu}

We first prove in Proposition \ref{2141gsd} that our candidate equilibrium strategy $(u_{\ub^*,\beta^*},[\beta^*,\infty))$ results from a mild threshold control strategy $(u_{\ub^*,\beta^*},[\beta^*,\infty), \delta)$ (cf.~Proposition \ref{thm:mildvalue}) and that both limiting cost criteria \eqref{wrqD} and \eqref{eq:limiting-cost-cr} exist, under the assumption that the difference between the two discount rates $q_2$ and $q_1$ is sufficiently large.
We then prove in Theorem \ref{ex_mix_theorem} that this candidate is indeed an equilibrium strategy under the same assumption; see Figure \ref{fig:mixedwithu1}--\ref{fig:udetail} for an illustration.

\begin{proposition}\label{2141gsd} For each $q_1$, there exists a constant $h>0$ such that if $q_2-q_1\geq h$, then:
\begin{enumerate}[\rm (i)]
\vspace{-2mm}
\item $(u_{\ub^*,\beta^*},[\beta^*,\infty), \delta)$  corresponding to \eqref{ubbeta}--\eqref{u*} satisfies $0 < \ub^* < \beta^*$ and is a  mild threshold control strategy (cf.~Definition \ref{def:mixedm}), for each $\delta \in (0, \beta^*)$; 
    
\vspace{-2mm}
\item The associated limiting cost criterion $w(x;q_i,u_{\ub^*,\beta^*},[\beta^*,\infty))$ (cf.~\eqref{wrqD}) exists and is given by $v(x;q_i)$ in \eqref{candv}, for all $x \in [0,\infty)$ and $i=1,2$;
    
\vspace{-2mm}
\item The associated limiting cost criterion $J(\cdot;u_{\ub^*,\beta^*},[\beta^*,\infty))$ (cf.~\eqref{eq:limiting-cost-cr}) exists and is given by $V(\cdot)$ in \eqref{J_mild_def}, which additionally satisfies \eqref{eq_ex_1} and the corresponding BVP \eqref{JBVPa}--\eqref{JBVPc} for this case study, as well as $V(\cdot) \in \mathcal{C}^2(0,\infty)$.
\end{enumerate}
\end{proposition}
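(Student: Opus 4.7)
The plan is to establish (i), (ii), (iii) sequentially, leveraging the explicit formulas from Proposition~\ref{thm:mildvalue} and applying Lemmata~\ref{useful-lemma} and \ref{useful-lemma-2} to identify the limiting cost criteria. Throughout, the regime $q_2-q_1 \geq h$ with $h$ large (equivalently $q_2 \to \infty$ with $q_1$ fixed) will be used to control the signs of denominators appearing in \eqref{ubbeta} and to force the explosion of $u_{\ub^*,\beta^*}$ at $\beta^*$.

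For part~(i), I would first perform an asymptotic analysis of $\ub^*$ as $q_2-q_1 \to \infty$ with $q_1$ fixed. Since $\gamma_2 = \gamma(q_2)\to\infty$ while $a_i,b_i$ scale like $O(1/(q_2-q_1))$ and $O(q_2/(q_2-q_1))$ respectively, direct substitution into the formula \eqref{ubbeta} reveals the leading-order behaviour of both numerator and denominator of $\ub^*$, from which one reads off positivity of $\ub^*$ for $h$ large enough, and compares with $\beta^* = (q_1+q_2)/2$ to obtain $\ub^* < \beta^*$. I would then verify that $u_{\ub^*,\beta^*}$ in \eqref{u*} is càdlàg with only one discontinuity on each compact subinterval of $(0,\beta^*)$ (namely the jump at $\ub^*$ from $0$ to $u_{\ub^*,\beta^*}(\ub^*)>0$), and check the required condition $\delta \in (0,\beta^*)$. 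The technically delicate step is the verification of the entrance-not-exit condition \eqref{non-exit-entrance-cond} at $\beta^*$: since $a_1\beta^* + b_1 - 1 = 0$, the denominator of \eqref{u*} vanishes at $\beta^*$, so the plan is to show that the numerator $\tfrac12(1+(\sigma^2-q_1)a_1)\beta^{*2} - q_1 b_1 \beta^* - q_1 c_1$ does \emph{not} vanish when $q_2-q_1 \geq h$ (with $c_1$ as in \eqref{ci}), which forces $u_{\ub^*,\beta^*}(x) \to \infty$ as $x \uparrow \beta^*$. From there, the scale-function condition \eqref{non-exit-entrance-cond} is deduced using the resulting rate of explosion and the asymptotics of $s_{\beta^*}$ near $\beta^*$ (cf.~Lemma~\ref{scale_properties_lemma}).

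For part~(ii), I would first argue existence and uniqueness of a strong solution to the SDE \eqref{Xbim} for all $x \in (0,\infty)$: on $(0,\beta^*)$ this is obtained exactly as in Section~\ref{sec:mild} (note $\beta^*$ is inaccessible by part (i)), while for $x \geq \beta^*$ it reduces to the same equation after the initial jump to $\beta^* - \delta$. Since Proposition~\ref{thm:mildvalue} already furnishes a solution $v(\cdot;q_i)$ to the BVP \eqref{XBVPaa}--\eqref{XBVPad} (which is precisely the BVP of Lemma~\ref{useful-lemma} for this setting) an invocation of Lemma~\ref{useful-lemma} immediately yields $w(x;q_i,u_{\ub^*,\beta^*},[\beta^*,\infty)) = v(x;q_i)$ for $x\in[0,\infty)$ and $i=1,2$. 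For part~(iii), apply Lemma~\ref{useful-lemma-2}: since $F$ puts mass $1/2$ on each of the two points $q_1,q_2$, the integrability condition \eqref{hlq3hjl} reduces to local boundedness of $v,v',v''$ (away from the two kinks at $\ub^*,\beta^*$), which is evident from \eqref{candv}. This delivers $J(\cdot;u_{\ub^*,\beta^*},[\beta^*,\infty)) = V(\cdot)$ together with the BVP \eqref{JBVPa}--\eqref{JBVPc}; property \eqref{eq_ex_1} is built into the construction. Finally, $V\in \mathcal{C}^2(0,\infty)$ is verified by direct inspection: on $(\ub^*,\beta^*)$, $V''(x)=\tfrac12(a_1+a_2)=0$ matching $V''\equiv 0$ on $[\beta^*,\infty)$ by \eqref{XBVPab}, while continuity of $V''$ at $\ub^*$ is precisely the condition \eqref{loweq_x_eq} that was used to determine $\ub^*$.

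The main obstacle is the part~(i) verification that $\beta^*$ is entrance-not-exit, which hinges on showing non-vanishing of the numerator of \eqref{u*} at $\beta^*$ for $q_2-q_1$ large and then translating this into the scale-function condition \eqref{non-exit-entrance-cond}. The asymptotic comparisons needed to show $\ub^*\in(0,\beta^*)$ in this regime are algebraically involved but routine once the scaling of $\gamma_2,a_i,b_i$ in $q_2-q_1$ is tracked carefully; parts~(ii) and~(iii), by contrast, are essentially immediate applications of the already-established verification lemmata.
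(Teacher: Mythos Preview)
Your outline for parts~(ii) and~(iii) is essentially the paper's own argument and is fine. Part~(i), however, has two genuine gaps.

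\emph{Positivity of $u_{\ub^*,\beta^*}$ on all of $[\ub^*,\beta^*)$.} Definition~\ref{def:mixed0} requires the rate to map into $[0,\infty)$, so you must show $u_{\ub^*,\beta^*}(x)\geq 0$ for every $x\in[\ub^*,\beta^*)$, not just at the endpoints. The denominator $a_1 x + b_1 - 1$ in \eqref{u*} is positive on $[\ub^*,\beta^*)$ (it is linear with $a_1<0$ and vanishes at $\beta^*$), so the sign of $u_{\ub^*,\beta^*}$ is the sign of the numerator. Showing the numerator is merely \emph{nonzero} at $\beta^*$ does not rule out it being negative somewhere on the interval; if it were, the strategy would not even be well-defined. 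The paper handles this in a separate step by proving the numerator is positive and increasing on $[\ub^*,\beta^*)$ for $q_2-q_1$ large, via explicit asymptotics as $q_2\to\infty$.

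\emph{The entrance-not-exit verification.} You cannot deduce \eqref{non-exit-entrance-cond} from Lemma~\ref{scale_properties_lemma}: that lemma \emph{assumes} \eqref{non-exit-entrance-cond} (through Definition~\ref{def:mixed0}) and \emph{concludes} explosion of $u_\beta$, not the other way round. Mere explosion of $u_{\ub^*,\beta^*}$ at $\beta^*$ is not sufficient for \eqref{non-exit-entrance-cond}; the rate of explosion matters. Concretely, if $u_{\ub^*,\beta^*}(x)\sim C/(\beta^*-x)$ near $\beta^*$, then $s'_{\beta^*}(x)$ behaves like $(\beta^*-x)^{-2C/(\sigma^2(\beta^*)^2)}$, and one needs this exponent to be at least $1$ for $s_{\beta^*}$ to diverge (and then more for the full integral in \eqref{non-exit-entrance-cond}). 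The paper establishes the quantitative lower bound
\[
\frac{u_{\ub^*,\beta^*}(x)}{\sigma^2 x^2}\;\geq\;\frac{1}{\beta^*-x},\qquad x\in[\ub^*,\beta^*),
\]
for $q_2-q_1$ large (via a sign analysis of a quadratic in $x$ with coefficients depending on $q_2$), and from this bound computes directly that $\lim_{x\uparrow\beta^*}s_{\beta^*}(x)=\infty$, which together with \cite[Problem~5.27, Section~5.5]{Karatzas2} yields \eqref{non-exit-entrance-cond}. Your plan needs a comparable quantitative step; the reference to Lemma~\ref{scale_properties_lemma} should be dropped.
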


\begin{proof}
We prove the parts separately.

\vspace{1mm}
{\it Proof of (i).} 
This is presented in the following five steps, where we fix an arbitrary $q_1$ and we write $\ub^* \equiv \ub^*(q_2;q_1)$ and $\beta^* \equiv \beta^*(q_2;q_1)$ to stress, whenever needed, their dependence on $q_2$ and $q_1$ (cf.~\eqref{ubbeta}).

\vspace{1mm}
{\it Step 1. Proof that $\ub^*(q_2;q_1) <\infty$ for all $q_2 \in (q_1, \infty]$}.
To see this, notice first that it follows by definition \eqref{ubbeta} that $\ub^*(q_2;q_1) <\infty$ for all $q_2 \in (q_1, \infty)$. 
We thus only need to compute the limit  
$$
\lim_{q_2 \to \infty} \ub^*(q_2;q_1) = \frac{2(\gamma_1-1) (q_1-\sigma^2)}{(\gamma_1-2)} \in (0,\infty), 
$$
where we recall that $\gamma_i = \gamma(q_i)$ in \eqref{gammaq} and the dependence on $q_2$ of the constants $a_i$ and $b_i$ defined by \eqref{Aiaibi}, for $i=1,2$.

\vspace{1mm}
{\it Step 2. Proof that there exists $h>0$ such that $0 < \ub^*(q_2;q_1) <\beta^* (q_2;q_1)$ for all $q_2-q_1\geq h$.}
This is a direct consequence of the continuity (cf.~\eqref{ubbeta}) of the maps $q_2 \mapsto \ub^*(q_2;q_1)$ and $q_2 \mapsto \beta^* (q_2;q_1) - \ub^*(q_2;q_1)$, and Step 1, which imply that $\lim_{q_2 \to \infty} \ub^*(q_2;q_1) > 0$ and  $\lim_{q_2 \to \infty} \beta^* (q_2;q_1) - \ub^*(q_2;q_1) = \infty$.

\vspace{1mm}
{\it Step 3. Proof that there exists $h>0$ such that $u_{\ub^*,\beta^*}(x)$ is positive and (strictly) increasing for all $x\in [\ub^*,\beta^*)$ and $q_2-q_1\geq h$.}
We firstly note from the expression \eqref{ci} of $c_1$ and \eqref{u*} of $u_{\ub^*,\beta^*}(\cdot)$ that 
\begin{equation} \label{ub*}
u_{\ub^*,\beta^*}(\ub^*) 
=\frac{\tfrac12 \sigma^2 \ub^* g_1(\ub^*)}{a_1 \ub^* + b_1 - 1}, 
\quad \text{where} \quad 
g_i(x) := (\gamma_i - 2) \Big(\frac{1}{q_i - \sigma^2} - a_i \Big) x - (\gamma_i - 1) b_i 
, \quad i=1,2.
\end{equation}
Assuming that $q_2$ is large enough as in Step 2, such that $0 < \ub^*(q_2;q_1) <\beta^* (q_2;q_1)$,  we can easily see from \eqref{Aiaibi} and \eqref{ubbeta} that 
$(q_2 - q_1) (a_1 \ub^* + b_1 - 1)  
= 2 (\beta^*(q_2;q_1) - \ub^*(q_2;q_1)) > 0$.
Hence, to show that $u_{\ub^*,\beta^*}(\ub^*)>0$, it remains to show that 
$g_1(\ub^*) > 0$. 

By rewriting the expression \eqref{ubbeta} of $\ub^*$ in view of the definition of $g_i(\cdot)$ in \eqref{ub*} for $i=1,2$, we observe that 
$g_1(\ub^*) = - g_2(\ub^*)$. Then, recalling that $\gamma_i = \gamma(q_i)$ in \eqref{gammaq}, the dependence on $q_2$ of the constants $a_i$, $b_i$ and $\beta^*$, defined by \eqref{Aiaibi} and \eqref{ubbeta}, we observe that 
\begin{align*}
g_1(\beta^*) 
&= \big(\gamma(q_1) - 2 \big) \frac{(q_2 + q_1 - 2 \sigma^2) (q_2 + q_1)}{2 (q_2-q_1) (q_1 - \sigma^2)} - \big(\gamma(q_1) - 1 \big) \frac{2 q_2}{q_2-q_1} \\
-g_2(\beta^*) 
&= \big(\gamma(q_2) - 2 \big) \frac{(q_2+q_1-2\sigma^2) (q_2 + q_1)}{2 (q_2-q_1)(q_2 - \sigma^2)} - \big(\gamma(q_2) - 1 \big) \frac{2 q_1}{q_2-q_1},
\end{align*}
and for $q_2$ large enough we have that $g_1(\beta^*) > - g_2(\beta^*) > 0$. 
Finally, we observe that for $q_2$ large enough, we also have that  
\begin{align*} 
x_1 :=\frac{2q_2(q_1-\sigma^2)(\gamma_1-1)}{(q_1+q_2-2\sigma^2)(\gamma_1-2)} 
> x_2 :=\frac{2q_1(q_2-\sigma^2)(\gamma_2-1)}{(q_1+q_2-2\sigma^2)(\gamma_2-2)} 
\quad \text{such that} \quad 
g_1(x_1) = 0 = g_2(x_2),
\end{align*}
thanks to Lemma \ref{nec_inequality}, the continuity of the maps $q_2 \mapsto x_i \equiv x_i(q_2)$, for $i=1,2$, and  
\begin{align*}
\lim_{q_2 \to \infty} \big(x_1(q_2) - x_2(q_2)\big)
= \frac{2 \big(q_1 - \sigma^2(\gamma_1 - 1)\big)}{\gamma_1 - 2} > 0. 
\end{align*}
Therefore, we conclude from the above that there exists $h>0$ such that $g_1(\beta^*) > - g_2(\beta^*) > 0$, $- g_2(x_1) > g_1(x_1) = 0$, and due to the linearity of $g_i(\cdot)$ that $g_1(\ub^*) = - g_2(\ub^*) > 0$, for all $q_2-q_1\geq h$, as required.

In order to prove that $u_{\ub^*,\beta^*}(\cdot)$ is increasing on $(\ub^*,\beta^*)$, we first observe that its denominator $x \mapsto a_1 x + b_1 - 1$ from \eqref{u*} is decreasing and positive on $(\ub^*,\beta^*)$ 
(to see this, recall that $a_1<0$ and that this function takes the value zero for $x=\beta ^*$). 
Hence, it suffices to show that the numerator of $u_{\ub^*,\beta^*}(\cdot)$ is positive and increasing on $(\ub^*,\beta^*)$.  Above we showed that $u_{\ub^*,\beta^*}(\ub^*)>0$, and hence it suffices to show that the derivative of the numerator of \eqref{u*}  is  positive on  $(\ub^*,\beta^*)$. Using \eqref{Aiaibi}, we find that this is equivalent to 
\begin{align*} 
\big(1 + (\sigma^2 - q_1) a_1 \big) x - q_1 b_1 > 0 
&\quad \Leftrightarrow \quad 
\frac{(q_2 + q_1 - 2 \sigma^2) x - 2 q_2 q_1}{q_2 - q_1} > 0 , 
\quad x \in (\ub^*,\beta^*).
\end{align*}
Thanks to Assumption \ref{assum:appli} which guarantees that $q_2 + q_1 - 2 \sigma^2>0$, it thus suffices to show that there exists $h>0$ such that $q_2 \mapsto \big((q_2 + q_1 - 2 \sigma^2)\ub^*(q_2;q_1) - 2 q_2 q_1 \big)/(q_2 - q_1) > 0$ for all $q_2-q_1\geq h$; which follows from the mapping's continuity and the limit in Step 1 which gives
$$
\lim_{q_2 \to \infty} \frac{(q_2 + q_1 - 2 \sigma^2)\ub^*(q_2;q_1) - 2 q_2 q_1}{q_2 - q_1} = \frac{2 \big(q_1 - \sigma^2(\gamma_1 - 1)\big)}{\gamma_1 - 2} > 0,
$$
where the latter inequality follows from $\gamma_1 = \gamma(q_1)$ in \eqref{gammaq} and the result of Lemma \ref{nec_inequality}.

\vspace{1mm}
{\it Step 4. Proof that $c_1(q_2;q_1) <\infty$ for all $q_2 \in (q_1, \infty]$}.
To see this, notice first that it follows by definition \eqref{ci} that $c_1(q_2;q_1) <\infty$ for all $q_2 \in (q_1, \infty)$. 
We thus only need to compute the limit (by substituting $a_1$, $b_1$ from \eqref{Aiaibi}) 
\begin{align*}
\lim_{q_2 \to \infty} q_1 c_1(q_2;q_1) 
&= \lim_{q_2 \to \infty} \bigg\{ 
\tfrac12 \big(\ub^*(q_2;q_1)\big)^2 \Big(1 - \frac{2(\sigma^2 - q_1)}{q_2-q_1} - \sigma^2 (\gamma_1 - 2) \frac{q_2 + q_1 - 2\sigma^2}{(q_2-q_1)(q_1-\sigma^2)} \Big) \\
&\qquad \qquad \;+ \ub^*(q_2;q_1) \big( \tfrac12 \sigma^2 (\gamma_1 - 1) - q_1 \big) \frac{2 q_2}{q_2-q_1} 
\bigg\}\\
&= \tfrac12 \lim_{q_2 \to \infty} \big(\ub^*(q_2;q_1)\big)^2 - 2 q_1 \lim_{q_2 \to \infty} \ub^*(q_2;q_1) 
<\infty,     
\end{align*}
where we use the limit from Step 1 in the second equality.

\vspace{1mm}
{\it Step 5. Proof that there exists $h>0$ such that $s_{\beta^*}(\cdot)$ defined by \eqref{scale_def} with $u_{\beta^*}(x) = u_{\ub^*,\beta^*}(x)$, $\mu(x)=0$ and $\sigma(x) = \sigma x$ for all $x \in (0,\beta^*)$, satisfies the condition \eqref{non-exit-entrance-cond} for all $q_2-q_1\geq h$.}
It follows from \eqref{u*} and \eqref{Aiaibi} that
\begin{equation} \label{bound-u-prel}
\frac{u_{\ub^*,\beta^*}(x)}{\sigma^2 x^2} 
= \frac{\tfrac12 \big(1 + (\sigma^2 - q_1) a_1 \big) x^2 - q_1 b_1 x - q_1 c_1}{\sigma^2 (-a_1) x^2 (\beta^* - x)} {\bf 1}_{\{x \in [\ub^*,\beta^*)\}}, \quad x \in (0,\beta^*) ,
\end{equation}
and we can show using \eqref{Aiaibi} that 
\begin{align} \label{M>0}
\begin{split}
&\frac{\tfrac12 \big(1 + (\sigma^2 - q_1) a_1 \big) x^2 - q_1 b_1 x - q_1 c_1}{\sigma^2 (-a_1) x^2} \geq 1 \\
&\Leftrightarrow \quad M(x;q_2) := \Big(\frac12 - \frac{3\sigma^2 - q_1}{q_2-q_1} \Big) x^2 - \frac{2 q_1 q_2}{q_2-q_1} x - q_1 c_1(q_2;q_1) \geq 0 , \quad x \in \big[\ub^*(q_2;q_1),\beta^*(q_2;q_1) \big).
\end{split}
\end{align} 
In view of the limit in Step 4, we can calculate the limit 
\begin{align*} 
\lim_{q_2 \to \infty}M(x;q_2) := \tfrac12 x^2 - 2 q_1 x - \big((\tfrac12 \lim_{q_2 \to \infty} \big(\ub^*(q_2;q_1)\big)^2 - 2 q_1 \lim_{q_2 \to \infty} \ub^*(q_2;q_1) \big) \geq 0 , \quad x \in \big[\lim_{q_2 \to \infty}\ub^*(q_2;q_1), \infty \big) ,
\end{align*}
whose positivity follows from the fact that $x \mapsto \lim_{q_2 \to \infty}M(x;q_2)$ is increasing thanks to $\gamma_1 = \gamma(q_1)$ in \eqref{gammaq} and the result of Lemma \ref{nec_inequality}, which imply that  
\begin{align*} 
\frac{d}{dx} \Big\{\lim_{q_2 \to \infty}M(x;q_2) \Big\} 
= x - 2 q_1 
\geq \lim_{q_2 \to \infty}\ub^*(q_2;q_1) - 2 q_1
= \frac{2 \big(q_1 - \sigma^2(\gamma_1 - 1)\big)}{\gamma_1 - 2} > 0, 
\quad x \in \big(\lim_{q_2 \to \infty}\ub^*(q_2;q_1), \infty \big) .
\end{align*}
Hence, we can conclude from the continuity of the map $q_2 \mapsto M(x;q_2)$ that that there exists $h>0$ such that $M(x;q_2) \geq 0$ for all $x \in (\ub^*,\beta^*)$ and $q_2-q_1\geq h$, which implies via \eqref{bound-u-prel} and \eqref{M>0} that
\begin{equation} \label{bound-u}
\frac{u_{\ub^*,\beta^*}(x)}{\sigma^2 x^2} 
\geq \frac{1}{\beta^* - x} {\bf 1}_{\{x \in [\ub^*,\beta^*)\}}, \quad x \in (0,\beta^*) .
\end{equation}
Substituting the lower bound obtained in \eqref{bound-u} into the definition \eqref{scale_def} of $s_{\beta^*}$ with $u_{\beta^*}(x) = u_{\ub^*,\beta^*}(x)$, $\mu(x)=0$, $\sigma(x) = \sigma x$, for all $x \in (0,\beta^*)$, 
we conclude that there exists a constant $k_1 = k_1(c)>0$ such that
\begin{align*}
\lim_{x\uparrow \beta^*} s_{\beta^*}(x) 
&= \lim_{x\uparrow \beta^*} \int_{c}^{x} \exp \bigg\{2 \int_{c}^{y} \frac{u_{\ub^*,\beta^*}(z)}{\sigma^2 z^2} dz \bigg\} \, dy \\
&\geq \int_{c \vee \ub^*}^{\beta^*} \exp \bigg\{2 \int_{c}^{y} \frac{1}{\beta^* - z} dz \bigg\} \, dy 
\geq k_1 \int_{c \vee \ub^*}^{\beta^*} e^{-2 \ln(\beta^* - y)} \, dy
= k_1 \int_{c \vee \ub^*}^{\beta^*} \frac{1}{(\beta^* - y)^2}\, dy 
= \infty.
\end{align*}
Then condition \eqref{non-exit-entrance-cond} holds true by \cite[Problem 5.27 (Section 5.5)]{Karatzas2}. 

\vspace{1mm}
Combining Steps 1--5 proves that $0< \ub^* < \beta^*$, and that $(u_{\ub^*,\beta^*},[\beta^*,\infty),\delta)$ is a mild threshold control strategy for each $\delta \in (0,\beta^*)$, which completes the proof of part (i).

\vspace{1mm}
{\it Proof of (ii).} 
Using part (i) and the arguments in Section \ref{sec:mild} showing that the associated SDE \eqref{Xubeta} has a unique strong solution for all $x \in (0,\infty)$, we can conclude from Lemma \ref{useful-lemma} that the associated limiting cost criterion $w(\cdot;q_i,u_{\ub^*,\beta^*},[\beta^*,\infty))$ defined by \eqref{wrqD} exists and is given by $w(x;q_i,u_{\ub^*,\beta^*},[\beta^*,\infty)) = v(x;q_i)$, for all $x \in (0,\infty)$ and $i=1,2$ (cf.~\eqref{w=w}), which takes the form of \eqref{candv} thanks to Proposition \ref{thm:mildvalue}.

\vspace{1mm}
{\it Proof of (iii).}
Now note that the functions in part (ii) straightforwardly satisfy \eqref{hlq3hjl}, thus Lemma \ref{useful-lemma-2} implies that the associated limiting cost criterion $J(\cdot;u_{\ub^*,\beta^*},[\beta^*,\infty))$ defined by \eqref{eq:limiting-cost-cr} exists and is given by $V(\cdot)$ defined by \eqref{J_mild_def} and satisfies \eqref{JBVPa}--\eqref{JBVPd} (specified in view of the present case study), as well as \eqref{eq_ex_1} thanks to Proposition \ref{thm:mildvalue}. 
Thanks to \eqref{JBVPd} we know that $V(\cdot)\in \mathcal{C}^2\big( (0,\infty) \setminus \{\ub^*,\beta^*\} \big) \cap \mathcal{C}^1(0,\infty)$, hence for $V(\cdot)\in\mathcal{C}^2(0,\infty)$, it remains to show that 
$$
V''(\beta^*-) = \tfrac12 (a_1 + a_2) = 0 = V''(\beta^*+)
$$
where the equalities follow by \eqref{J_mild_def}, \eqref{eq_ex_1} and \eqref{candv}, and that
\begin{align*}
V''(\ub^*-) 
&= \frac{q_1 v(\ub^*;q_1) + q_2 v(\ub^*;q_2) - (\ub^*)^2}{\sigma^2 (\ub^*)^2} \\
&= \frac{q_1 v(\ub^*;q_1) + q_2 v(\ub^*;q_2) + 2 u_{\ub^*,\beta^*}(\ub^*) (V'(\ub^*+;q_i) - 1) - (\ub^*)^2}{\sigma^2 (\ub^*)^2}
= V''(\ub^*+)
\end{align*}
where the equalities follow by adding together the ODEs \eqref{XBVPaa} for $i=1,2$, the ODEs \eqref{XBVPaaa} for $i=1,2$, and using \eqref{eq_ex_1}. 
\end{proof}

We are now ready to present our main result, according to which we have an equilibrium strategy in the form of a mild threshold control strategy, when the difference $q_2-q_1$ of the possible discount rates is sufficiently large. 
Its proof is based on an application of our general verification Theorem \ref{main_thm}.

\begin{theorem} \label{ex_mix_theorem}  
For each $q_1$, there exists a constant $h>0$ such that if $q_2-q_1\geq h$, then $(u_{\ub^*,\beta^*},[\beta^*,\infty))$ associated with \eqref{ubbeta}--\eqref{u*} is an equilibrium strategy for the time-inconsistent singular control problem defined by \eqref{TISC}. 
\end{theorem}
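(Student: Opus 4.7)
The strategy is to apply the verification Theorem \ref{main_thm} with $l<b^*=\beta^*<r$ (the mild threshold case) to the candidate $(u_{\ub^*,\beta^*},[\beta^*,\infty))$ constructed in Proposition \ref{thm:mildvalue}. Proposition \ref{2141gsd} already provides the bulk of the admissibility: for $q_2-q_1$ large enough it gives $0<\ub^*<\beta^*$, the existence of a unique strong solution to the associated SDE \eqref{Xubeta}, the BVP solution $v(\cdot;q_i)$ in \eqref{candv} with $w(\cdot;q_i,u_{\ub^*,\beta^*},[\beta^*,\infty))=v(\cdot;q_i)$, the identity $J(\cdot;u_{\ub^*,\beta^*},[\beta^*,\infty))=V(\cdot)$ with $V\in\mathcal{C}^2(0,\infty)$, and the integrability condition \eqref{hlq3hjl}. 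Hence everything reduces to verifying the three inequalities \eqref{mainthmcond1}--\eqref{mainthmcond3} for a possibly enlarged threshold on $q_2-q_1$.

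Condition \eqref{mainthmcond2} is essentially immediate: by \eqref{eq_ex_1} of Proposition \ref{thm:mildvalue}, $V'(x)=1$ holds on $(\ub^*,\beta^*)$, and by continuity of $V'$ (from $V\in\mathcal{C}^2$) it extends to $\overline{\mathcal{M}_{u^*}}\setminus\{l,b^*\}=[\ub^*,\beta^*)$. Condition \eqref{mainthmcond3} follows by the same scheme used in Theorem \ref{prop_pure_ex}: define $\phi(x):=\tfrac12 x^2-\tfrac12 q_1 v(x;q_1)-\tfrac12 q_2 v(x;q_2)$ on $[\beta^*,\infty)$; since $v(\cdot;q_i)$ is affine on $[\beta^*,\infty)$ with $v'(x;q_i)=1$, $\phi'(x)=x-\tfrac12(q_1+q_2)=x-\beta^*\geq 0$ by the expression $\beta^*=\tfrac12(q_1+q_2)$ in \eqref{ubbeta}; and $\phi(\beta^*)=-\tfrac12\sigma^2(\beta^*)^2 V''(\beta^*)=0$ upon summing the ODEs \eqref{XBVPaa}--\eqref{XBVPaaa} at $\beta^*$ and using $V''(\beta^*)=0$ together with $V'(\beta^*)=1$ from Proposition \ref{2141gsd}.

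The main obstacle is condition \eqref{mainthmcond1}, requiring $V'(x)\leq 1$ on ${\rm int}(\mathcal{W}_{u^*})=(0,\ub^*)$. Since $V'(\ub^*)=1$ by continuity from the mild region, it suffices to show $V''(x)\geq 0$ on $(0,\ub^*)$, i.e.~that $V'$ is non-decreasing up to its terminal value of $1$. From \eqref{candv},
\[
2V''(x)=A_1\gamma_1(\gamma_1-1)x^{\gamma_1-2}+A_2\gamma_2(\gamma_2-1)x^{\gamma_2-2}+\frac{1}{q_1-\sigma^2}+\frac{1}{q_2-\sigma^2},\quad x\in(0,\ub^*),
\]
with $V''(0+)>0$ and $V''(\ub^*-)=0$ by the $\mathcal{C}^2$-pasting of Proposition \ref{2141gsd}.(iii). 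Since $V'''(x)=\tfrac12\sum_i A_i\gamma_i(\gamma_i-1)(\gamma_i-2)x^{\gamma_i-3}$ is a linear combination of two monomials with exponents $\gamma_i-3$, the argument of Proposition \ref{non_ex_equilibrum} (based on $V'''$ having at most one zero on $(0,\ub^*)$) applies in reverse: either $V''>0$ throughout $(0,\ub^*)$, or $V''$ changes sign from positive to negative; the second possibility is incompatible with $V''(\ub^*-)=0$ and the limit $\lim_{x\uparrow\ub^*}V'''(x)\leq 0$ that must hold at an interior local minimum of $V'$. I would then follow the asymptotic strategy of Proposition \ref{non_ex_equilibrum}.(ii) reversed: evaluate $\lim_{x\uparrow\ub^*}V'''(x)$ via differentiating the summed ODE, express the resulting quantity $H(q_2;q_1)$ as in \eqref{V'''b}, and use $\gamma_i=\gamma(q_i)$ with the Step-1 limit $\lim_{q_2\to\infty}\ub^*(q_2;q_1)\in(0,\infty)$ from Proposition \ref{2141gsd} to conclude that for sufficiently large $q_2-q_1$ this sign is non-positive, thereby excluding the second possibility and establishing $V''\geq 0$ on $(0,\ub^*)$.

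Finally, combining the verified conditions \eqref{mainthmcond1}--\eqref{mainthmcond3} with Theorem \ref{main_thm} (for $0<b^*=\beta^*<r=\infty$) yields that $(u_{\ub^*,\beta^*},[\beta^*,\infty))$ is an equilibrium strategy, after possibly enlarging the threshold $h$ to accommodate all three verification steps and the hypotheses of Proposition \ref{2141gsd}.
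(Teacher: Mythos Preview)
Your treatment of conditions \eqref{mainthmcond2} and \eqref{mainthmcond3} is correct and essentially matches the paper (the paper computes $\phi(\beta^*)=0$ directly from \eqref{candv} and the explicit constants, but your route via $V''(\beta^*)=0$ works equally well).

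For condition \eqref{mainthmcond1} you have the right setup but miss a key simplification. Differentiating the summed ODE \eqref{XBVPaa} and evaluating at $\ub^*$ gives
\[
\sigma^2(\ub^*)^2\lim_{x\uparrow\ub^*}V'''(x)=q_1 v'(\ub^*;q_1)+q_2 v'(\ub^*;q_2)-2\ub^*.
\]
In the mild case $v'(\ub^*;q_i)=a_i\ub^*+b_i$, and the explicit constants in \eqref{Aiaibi} satisfy $q_1a_1+q_2a_2=2$ and $q_1b_1+q_2b_2=0$, so the right-hand side is \emph{identically zero}. Thus $V'''(\ub^*-)=0$ for every $q_2>q_1$, with no asymptotic argument needed. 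Since $V'''$ has at most one zero on $(0,\infty)$ and that zero sits at $\ub^*$, $V'''$ has constant sign on $(0,\ub^*)$; combined with $V''(0+)>0$ and $V''(\ub^*)=0$ this forces $V'''<0$ and hence $V''>0$ on $(0,\ub^*)$. This is the paper's argument.

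Your proposed route---carrying over the $H(q_2;q_1)$ analysis from Proposition \ref{non_ex_equilibrum} and arguing the sign is non-positive for large $q_2-q_1$---would, if computed, also yield $H=0$, so it is not wrong, but it obscures the point. More importantly, your phrase ``the limit $\lim_{x\uparrow\ub^*}V'''(x)\leq 0$ that must hold at an interior local minimum of $V'$'' is backwards: in the second branch of the dichotomy $V''$ approaches $0$ from below, so $V'''(\ub^*-)\geq 0$, not $\leq 0$. A weak inequality $V'''(\ub^*-)\leq 0$ therefore does not by itself exclude that branch; you need the stronger observation that $V'''(\ub^*-)=0$ pins down the unique zero of $V'''$ at $\ub^*$ itself, leaving $V'''$ sign-definite on $(0,\ub^*)$.
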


\begin{proof} 
Note that the SDE corresponding to the mild threshold control strategy $(u_{\ub^*,\beta^*},[\beta^*,\infty),\delta)$ has a unique strong  solution, for $\delta$ sufficiently small (cf.~Section \ref{sec:mild}).
Recall that the solution $v(\cdot;q_i)$ to the associated BVP from Lemma \ref{useful-lemma}, which takes the form \eqref{XBVPaa}--\eqref{XBVPad}, is constructed in Proposition \ref{thm:mildvalue}. 
Recall also from Proposition \ref{2141gsd} that $w(\cdot;q_i,u_{\ub^*,\beta^*},[\beta^*,\infty)) = v(\cdot;q_i)$ for each $i=1,2$, and  
that $J(\cdot;u_{\ub^*,\beta^*},[\beta^*,\infty)) = V(\cdot)$, where $V(\cdot)$ is defined in \eqref{J_mild_def}. Moreover, $V(\cdot)$ satisfies \eqref{eq_ex_1}, as well as \eqref{JBVPa}--\eqref{JBVPc} (specified in view of the present case study) and $V(\cdot) \in \mathcal{C}^2(0,\infty)$. 
Finally, note that \eqref{hlq3hjl} is immediately verified. 
Hence, all that remains is to verify that conditions \eqref{mainthmcond1}--\eqref{mainthmcond3} of the (verification) Theorem~\ref{main_thm}  hold true (recalling that $l=0<b^*=\beta^*<\infty=r$).

\vspace{1mm}
{\it Verification of condition \eqref{mainthmcond1}}. 
It is easy to see from \eqref{candv}  and \eqref{J_mild_def}, as well as \eqref{eq_ex_1}, that 
\begin{equation} \label{mV''0}
V''(0+) = \frac{1}{2(q_1-\sigma^2)} + \frac{1}{2(q_2-\sigma^2)} > 0 
\quad \text{and} \quad 
V''(\ub^*) = 0.
\end{equation}
By differentiating the ODEs \eqref{XBVPaa} with respect to $x$ for each $i=1,2$ and adding them, we obtain from \eqref{J_mild_def} that 
\begin{align*}
&2x + \sigma^2 x^2 V'''(x) + 2 \sigma^2 x V''(x) - q_1 v'(x;q_1) - q_2 v'(x;q_2) = 0, \quad x \in (0,\ub^*).
\end{align*}
Hence, given that $v'(\ub^*;q_i) = a_i \ub^* + b_i$ thanks to \eqref{XBVPad} and $V''(\ub^*)=0$ thanks to \eqref{mV''0}, we obtain from \eqref{Aiaibi} that  
$\lim_{x \uparrow \ub^*} V'''(x) 
= 0$. 
We also observe directly from \eqref{candv} that $V \in \mathcal{C}^3(0,\ub^*)$ and 
\begin{align*}
V'''(x) = 
\tfrac{1}{2} x^{\gamma_1-3} \left( A_1\gamma_1(\gamma_1-1)(\gamma_1-2) + A_2\gamma_2(\gamma_2-1)(\gamma_2-2) x^{\gamma_2-\gamma_1}\right),  \quad x \in (0,\ub^*),
\end{align*}
which implies that $V'''(\cdot)$ does not have a root in $(0,b^*)$ (to see this use the $\lim_{x \uparrow \ub^*} V'''(x) 
= 0$). Hence, using also \eqref{mV''0} we conclude that 
$V''(x)>0$, for all $x \in (0,\ub^*)$.
Combining this with the observation that $V'(\ub^*) = 1$ from \eqref{eq_ex_1}, we conclude that $V'(x) < 1$ for all $x\in(0,\ub^*)={\rm int}(\mathcal{W}_{u_{\ub^*,\beta^*}})$.

\vspace{1mm}
{\it Verification of condition \eqref{mainthmcond2}}. 
This follows directly by construction of $v(x;q_i)$ leading to \eqref{candv}, such that \eqref{eq_ex_1} holds true, and given that $V(\cdot) \in \mathcal{C}^2(0,\infty)$, we have $V'(x) = 1$ for all $x \in \ol{\mathcal{M}_{u_{\ub^*,\beta^*}}} \setminus \{0,\beta^*\} = [\ub^*,\beta^*)$. 
(Recall that in our notation $\beta^*$ takes the role of $b^*$ in \eqref{mainthmcond2}--\eqref{mainthmcond3}, cf.~Theorem \ref{main_thm}).

\vspace{1mm}
{\it Verification of condition \eqref{mainthmcond3}}.
Recall $\mu(\cdot) \equiv 0$ and $f(x)=\frac12 x^2$ and denote the left-hand side of \eqref{mainthmcond3} by
\begin{align*}
\phi(x)
& :=\tfrac{1}{2} x^2 - \tfrac{1}{2} q_1 v(x;q_1) - \tfrac{1}{2} q_2 v(x;q_2) , 
\quad x\in [\beta^*,\infty) = \cal S.
\end{align*}
Differentiating $\phi(x)$ and using the fact that $v'(x;q_i) = 1$ for $x\in[\beta^*,\infty)$ thanks to \eqref{XBVPab}, we obtain thanks to \eqref{ubbeta} that 
\begin{align*}
\phi'(x) = x - \tfrac12 (q_1+q_2) > \beta^* - \tfrac12 (q_1+q_2) = 0, 
\quad x\in (\beta^*,\infty).
\end{align*}
Combining this with 
\begin{align*}
\phi(\beta^*) 
= \tfrac{1}{2} (\beta^*)^2 - \tfrac{1}{2} q_1 v(\beta^*;q_1) - \tfrac{1}{2} q_2 v(\beta^*;q_2) 
= 0,
\end{align*}
where the latter equality follows from \eqref{candv} with \eqref{Aiaibi}--\eqref{ci} and \eqref{XBVPad}. 
We therefore conclude that $\phi(x) \geq 0$ for all $x \in [b^*,\infty)$.
\end{proof}

\bibliographystyle{abbrv}
\bibliography{NonExpDividends}

\begin{thebibliography}{10}

\bibitem{alvarez2001singular}
L.~H. Alvarez.
\newblock Singular stochastic control, linear diffusions, and optimal stopping: A class of solvable problems.
\newblock {\em SIAM J. Control. Optim.}, 39(6):1697--1710, 2001.

\bibitem{bass2007pathwise}
R.~F. Bass, K.~Burdzy, and Z.-Q. Chen.
\newblock Pathwise uniqueness for a degenerate stochastic differential equation.
\newblock {\em Ann. Probab.}, 35(6):2385--2418, 2007.

\bibitem{bass2005one}
R.~F. Bass and Z.-Q. Chen.
\newblock One-dimensional stochastic differential equations with singular and degenerate coefficients.
\newblock {\em Sankhy{\=a}: The Indian Journal of Statistics}, 67(1):19--45, 2005.

\bibitem{bather1966continuous}
J.~Bather.
\newblock A continuous time inventory model.
\newblock {\em J. Appl. Probab.}, 3(2):538--549, 1966.

\bibitem{bayraktar2023equilibria}
E.~Bayraktar, Z.~Wang, and Z.~Zhou.
\newblock Equilibria of time-inconsistent stopping for one-dimensional diffusion processes.
\newblock {\em Math. Financ.}, 33(3):797--841, 2023.

\bibitem{tomas-continpubl}
T.~Bj{\"o}rk, M.~Khapko, and A.~Murgoci.
\newblock On time-inconsistent stochastic control in continuous time.
\newblock {\em Finance Stoch.}, 21(2):331--360, 2017.

\bibitem{bjork2021time}
T.~Bj{\"o}rk, M.~Khapko, and A.~Murgoci.
\newblock {\em Time-Inconsistent Control Theory with Finance Applications}.
\newblock Springer, 2021.

\bibitem{bjork2014theory}
T.~Bj{\"o}rk and A.~Murgoci.
\newblock A theory of {M}arkovian time-inconsistent stochastic control in discrete time.
\newblock {\em Finance Stoch.}, 18:545--592, 2014.

\bibitem{blei2013one}
S.~Blei and H.-J. Engelbert.
\newblock One-dimensional stochastic differential equations with generalized and singular drift.
\newblock {\em Stoch. Process. Appl.}, 123(12):4337--4372, 2013.

\bibitem{blei2013note}
S.~Blei and H.-J. Engelbert.
\newblock One-dimensional stochastic differential equations with generalized drift.
\newblock {\em Theory Probab. Appl.}, 58(3):345--357, 2014.

\bibitem{bodnariu2022local}
A.~Bodnariu, S.~Christensen, and K.~Lindensj\"{o}.
\newblock Local time pushed mixed equilibrium strategies for time-inconsistent stopping problems.
\newblock {\em SIAM J. Control. Optim.}, 62(2):1261--1290, 2024.

\bibitem{borodin2012handbook}
A.~N. Borodin and P.~Salminen.
\newblock {\em Handbook of Brownian Motion-facts and formulae}.
\newblock Birkh{\"a}user, 2012.

\bibitem{CHEN2014150}
S.~Chen, Z.~Li, and Y.~Zeng.
\newblock Optimal dividend strategies with time-inconsistent preferences.
\newblock {\em J. Econ. Dyn. Control.}, 46:150--172, 2014.

\bibitem{doi:10.1137/16M1088983}
S.~Chen, Z.~Li, and Y.~Zeng.
\newblock Optimal dividend strategy for a general diffusion process with time-inconsistent preferences and ruin penalty.
\newblock {\em SIAM J. Fin. Math.}, 9(1):274--314, 2018.

\bibitem{Chitashvili01121981}
R.~Chitashvili and N.~Lazrieva.
\newblock Strong solutions of stochastic differential equations with boundary conditions.
\newblock {\em Stochastics}, 5(4):255--309, 1981.

\bibitem{christensen2025time}
S.~Christensen, M.~Klein, and B.~Schultz.
\newblock On the time consistent solution to optimal stopping problems with expectation constraint.
\newblock {\em Appl. Math. Optim.}, 91(1):11, 2025.

\bibitem{christensen2018finding}
S.~Christensen and K.~Lindensj{\"o}.
\newblock On finding equilibrium stopping times for time-inconsistent {M}arkovian problems.
\newblock {\em SIAM J. Control. Optim.}, 56(6):4228--4255, 2018.

\bibitem{christensen2020time}
S.~Christensen and K.~Lindensj{\"o}.
\newblock On time-inconsistent stopping problems and mixed strategy stopping times.
\newblock {\em Stoch. Process. Appl.}, 130(5):2886--2917, 2020.

\bibitem{christensen2022moment}
S.~Christensen and K.~Lindensj{\"o}.
\newblock Moment-constrained optimal dividends: precommitment and consistent planning.
\newblock {\em Adv. Appl. Prob.}, 54(2):404--432, 2022.

\bibitem{Brownianinventory1}
J.~Dai and D.~Yao.
\newblock Brownian inventory models with convex holding cost, part 1: Average-optimal controls.
\newblock {\em Stoch. Syst.}, 3(2):442--499, 2013.

\bibitem{Brownianinventory2}
J.~Dai and D.~Yao.
\newblock Brownian inventory models with convex holding cost, part 2: Discount-optimal controls.
\newblock {\em Stoch. Syst.}, 3(2):500--573, 2013.

\bibitem{dupuis1993sdes}
P.~Dupuis and H.~Ishii.
\newblock {SDEs} with oblique reflection on nonsmooth domains.
\newblock {\em Ann. Probab.}, 21(1):554--580, 1993.

\bibitem{doi:10.1137/21M1442115}
S.~Federico, G.~Ferrari, and N.~Rodosthenous.
\newblock Two-sided singular control of an inventory with unknown demand trend.
\newblock {\em SIAM J. Control. Optim.}, 61(5):3076--3101, 2023.

\bibitem{Fleming-Soner}
W.~H. Fleming and H.~M. Soner.
\newblock {\em Controlled {M}arkov processes and viscosity solutions, 2nd edition}.
\newblock Springer, 2006.

\bibitem{HARRISON1978179}
J.~Harrison and A.~J. Taylor.
\newblock Optimal control of a {B}rownian storage system.
\newblock {\em Stoch. Process. Appl.}, 6(2):179--194, 1978.

\bibitem{harrison1983instantaneous}
J.~M. Harrison and M.~I. Taksar.
\newblock Instantaneous control of {B}rownian motion.
\newblock {\em Math. Oper. Res.}, 8(3):439--453, 1983.

\bibitem{HuangZhou2020}
Y.-J. Huang and Z.~Zhou.
\newblock Strong and weak equilibria for time-inconsistent stochastic control in continuous time.
\newblock {\em Math. Oper. Res.}, 46(2):428--451, 2020.

\bibitem{Karatzas2}
I.~Karatzas and S.~E. Shreve.
\newblock {\em Brownian Motion and Stochastic Calculus, 2nd edition}.
\newblock Springer, 1991.

\bibitem{karlin1981second}
S.~Karlin and H.~Taylor.
\newblock {\em A Second Course in Stochastic Processes}.
\newblock Number~2. Elsevier Science, 1981.

\bibitem{liang2024equilibria}
Z.~Liang, X.~Luo, and F.~Yuan.
\newblock Equilibria for time-inconsistent singular control problems.
\newblock {\em SIAM J. Control. Optim.}, 62(6):3213--3238, 2024.

\bibitem{lindensjo2019regular}
K.~Lindensj{\"o}.
\newblock A regular equilibrium solves the extended {HJB} system.
\newblock {\em Oper. Res. Lett.}, 47(5):427--432, 2019.

\bibitem{nakao1972pathwise}
S.~Nakao.
\newblock On the pathwise uniqueness of solutions of one-dimensional stochastic differential equations.
\newblock {\em Osaka J. Math.}, 9(3):513--518, 1972.

\bibitem{oksendal2019applied}
B.~{\O}ksendal and A.~Sulem.
\newblock {\em Applied Stochastic Control of Jump Diffusions}, volume~3.
\newblock Springer, 2019.

\bibitem{pilipenko2014introduction}
A.~Pilipenko.
\newblock {\em An introduction to stochastic differential equations with reflection}, volume~1.
\newblock Universit{\"a}tsverlag Potsdam, 2014.

\bibitem{protter2005stochastic}
P.~E. Protter.
\newblock {\em Stochastic Integration and Differential Equations, 2nd edition}.
\newblock Springer, 2005.

\bibitem{revuz2013continuous}
D.~Revuz and M.~Yor.
\newblock {\em Continuous martingales and Brownian motion}.
\newblock Springer-Verlag, 2013.

\bibitem{strini2023time}
J.~A. Strini and S.~Thonhauser.
\newblock Time-inconsistent view on a dividend problem with penalty.
\newblock {\em Scand. Actuar. J.}, 2023(8):811--833, 2023.

\bibitem{strotz}
R.~Strotz.
\newblock Myopia and inconsistency in dynamic utility maximization.
\newblock {\em Rev. Econ. Stud.}, 23(3):165--180, 1955.

\bibitem{tanaka1979stochastic}
H.~Tanaka.
\newblock Stochastic differential equations with reflecting boundary condition in convex regions.
\newblock {\em Hiroshima Math. J.}, 9(1):163--177, 1979.

\bibitem{yamada1971uniqueness}
T.~Yamada and S.~Watanabe.
\newblock On the uniqueness of solutions of stochastic differential equations.
\newblock {\em Kyoto J. Math.}, 11(1):155--167, 1971.

\bibitem{yan2022irreversible}
T.~Yan, K.~Park, and H.~Y. Wong.
\newblock Irreversible reinsurance: A singular control approach.
\newblock {\em Insur. Math. Econ.}, 107:326--348, 2022.

\bibitem{yang2023strong}
S.~Yang and T.~Zhang.
\newblock Strong solutions to reflecting stochastic differential equations with singular drift.
\newblock {\em Stoch. Process. Appl.}, 156:126--155, 2023.

\bibitem{zhao2014dividend}
Q.~Zhao, J.~Wei, and R.~Wang.
\newblock On dividend strategies with non-exponential discounting.
\newblock {\em Insur. Math. Econ.}, 58:1--13, 2014.

\bibitem{Levytimeinc}
W.~Zhong, Y.~Zhao, and P.~Chen.
\newblock Equilibrium periodic dividend strategies with non-exponential discounting for spectrally positive {L}évy processes.
\newblock {\em J. Ind. Manag. Optim.}, 17(5):2639--2667, 2021.

\bibitem{zhu2020singular}
J.~Zhu, T.~K. Siu, and H.~Yang.
\newblock Singular dividend optimization for a linear diffusion model with time-inconsistent preferences.
\newblock {\em Eur. J. Oper. Res.}, 285(1):66--80, 2020.

\bibitem{zipkin2000foundations}
P.~H. Zipkin.
\newblock {\em Foundations of inventory management}.
\newblock McGraw-Hill, New York, 2000.

\end{thebibliography}

\appendix
\section{Appendix}

\begin{lemma} \label{nec_inequality} 
Define $\gamma:(0,\infty) \mapsto \R$ by
\begin{align} \label{gammaq}
\gamma(q) := \frac{1}{2} \bigg( 1 + \sqrt{1+\frac{8q}{\sigma^2}} \bigg).
\end{align} 
Then $q-(\gamma(q)-1)\sigma^2>0$, for all $q \in (\sigma^2,\infty)$.
\end{lemma}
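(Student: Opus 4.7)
The plan is to do a direct algebraic manipulation. Write $t := q/\sigma^2$; then $t > 1$ by hypothesis, and the desired inequality $q - (\gamma(q)-1)\sigma^2 > 0$ is equivalent, after dividing by $\sigma^2$, to
\begin{equation*}
t > \tfrac{1}{2}\bigl(\sqrt{1+8t} - 1\bigr),
\end{equation*}
i.e.\ $2t + 1 > \sqrt{1+8t}$. Since the left-hand side is positive, squaring is an equivalence, giving $(2t+1)^2 > 1 + 8t$, i.e.\ $4t^2 - 4t > 0$, i.e.\ $4t(t-1) > 0$, which holds exactly when $t > 1$ (as $t>0$).

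The only thing to be careful about is the sign condition required to square the inequality; since $2t + 1 > 0$ for all $t > 0$, and we only need the claim on $t>1$, there is no obstacle. No step poses any real difficulty; this is a one-line computation once the substitution $t = q/\sigma^2$ is made.
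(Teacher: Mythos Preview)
Your proof is correct. It differs from the paper's argument: the paper sets $g(q):=q-(\gamma(q)-1)\sigma^2$, checks that $g(\sigma^2)=0$ (since $\gamma(\sigma^2)=2$), and then computes
\[
g'(q)=1-\frac{2}{\sqrt{1+8q/\sigma^2}}>0\quad\text{for }q\ge\sigma^2,
\]
concluding $g(q)>0$ for $q>\sigma^2$ by monotonicity. Your route is purely algebraic: after the substitution $t=q/\sigma^2$ you reduce the claim to the quadratic inequality $4t(t-1)>0$, which is immediate for $t>1$. Both arguments are equally short and elementary; yours avoids differentiation and makes the equality case $t=1$ (i.e.\ $q=\sigma^2$) visible from the factorisation, while the paper's version makes it visible from the initial value $g(\sigma^2)=0$. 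Either is perfectly adequate here.
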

\begin{proof}
Define $g(q) := q-(\gamma(q)-1)\sigma^2$. Note that $g(\sigma^2)=0$ and that
\begin{align*}
    g'(q)=1-\frac{2}{\sqrt{1+\frac{8q}{\sigma^2}}}>0, \enskip q\in[\sigma^2,\infty).
\end{align*}
We conclude that $g(q)>0$ for $q\in (\sigma^2,\infty)$.
\end{proof}

\begin{proof}[\bf Proof of Lemma \ref{scale_properties_lemma}]
Recalling from Assumption \ref{Ass:bounds} that $\mu$ and $\sigma$ are Lipschitz continuous and that $\sigma$ is bounded away from $0$, as well as from the Definition \ref{def:mixed0} that $u_\beta \geq 0$, 
we can see that there exists a constant $K_0>0$ such that
\begin{align*}
    \int_c^{\beta}\frac{2s_\beta(x)}{s_\beta'(x)\sigma^2(x)}dx&\leq K_0\int_c^\beta\frac{\int_c^x\exp \left\{ 2\int_{c}^{y}u_\beta(z) dz \right\} \, dy}{\exp \left\{ 2\int_{c}^{x}u_\beta(z) dz \right\}}dx 
    \leq K_0 \int_c^\beta(x-c)\,dx<\infty ,
\end{align*}
which proves the first statement. 

Using once again that $\mu$ and $\sigma$ are Lipschitz continuous and $u_\beta \geq 0$, we can show that that there exists a constant $K_1>0$, such that
\begin{align*}
\int_c^x s'_\beta(y)\int_c^y \frac{2dz}{s'_\beta(z)\sigma^2(z)} dy 
&\leq K_1 \int_c^x s'_{\beta}(y)\int_{c}^{y}\frac{dz}{\exp \left\{ 2\int_{c}^{z}u_\beta(\zeta) d\zeta \right\}}dy\\
&\leq K_1 (x-c) \int_c^x s'_{\beta}(y)dy 
= K_1 (x-c) s_{\beta}(x).
\end{align*}
Thus taking the limits as $x \to \beta$ on both sides and using \eqref{non-exit-entrance-cond} implies that $\lim_{x\uparrow\beta}s_{\beta}(x)=\infty$, which proves the second claim. 

The third claim that $\lim_{x\uparrow \beta} u_\beta(x) = \infty$ is then an immediate consequence of $\lim_{x\uparrow\beta}s_{\beta}(x)=\infty$, definition \eqref{scale_def}, and the Lipschitz continuity of $\mu$ and $\sigma$. 

Finally, a combination of \eqref{non-exit-entrance-cond} with the first part of \eqref{non-exit-entrance-cond2} yields from \cite[Section II.1.6]{borodin2012handbook} that $\beta$ is an entrance-not-exit boundary for the associated process $X^D$. 
\end{proof}

\begin{proof}[\bf Proof of Proposition \ref{prop_limit_ex}]
We prove this result in the following five steps. 

\vspace{1mm}
{\it Step 1}. 
Define $g(y) := \int_{a}^{y} \exp \left\{\int_{c}^{\kappa} u_\beta(z) dz \right\} d\kappa$. 
Then using that $\mu$ and $\sigma$ are Lipschitz continuous, $\sigma$ is bounded away from $0$ (Assumption \ref{Ass:bounds}), and $u_\beta \geq 0$ (see Definitions \ref{def:mixed0} and \ref{def:mixedm}) implying that $g(\cdot)$ is increasing, 
we can see that there exists a constant $K_0>0$ such that
\begin{align*}
\int_a^x \frac{s_\beta(y)-s_\beta(a)}{\sigma^2(y)s_\beta'(y)}u_\beta(y) dy 
\leq K_0 \int_a^x\frac{g(y)}{g'(y)}u_\beta(y)dy ,
\end{align*}
where $s_\beta$ is the scale function defined by \eqref{scale_def}. 
By straightforward differentiations of $g$, we    notice that $u_\beta(x)g'(x)=g''(x)$, hence by the integration by parts formula the above inequality further yields that 
\begin{align*}
\int_a^x \frac{s_\beta(y)-s_\beta(a)}{\sigma^2(y)s_\beta'(y)}u_\beta(y) dy 
\leq K_0 \int_a^x g(y) \frac{g''(y)}{(g'(y))^2}dy 
= - K_0 \frac{g(x)}{g'(x)} + K_0 (x-a)
\leq K_0 (x-a).
\end{align*}
Taking the limits as $x\to\beta$ on both sides then implies that, for any $a\in (l,\beta)$, we have \begin{align} \label{help_lemma_limit}
\int_a^\beta\frac{s_\beta(y)-s_\beta(a)}{\sigma^2(y)s_\beta'(y)}u_\beta(y)dy<\infty.
\end{align}

\vspace{1mm}
{\it Step 2}. 
We fix a point $a<\beta$ (close to $\beta$) which is such that $u_\beta(\cdot)$ is continuous in $(a,\beta)$ and $(a,\beta) \cap \cal S=\emptyset$ (such a point exists thanks to Definition \ref{def:mixedm}), and we define for $a<b<\beta$ the function
\begin{align} \label{wab}
\ol{w}_{a,b}(x) 
:= \E_x \left[ D^{u_\beta,S,\delta}_{\zeta^D_a\wedge\zeta^D_b} \right] 
= \E_x \bigg[ \int_0^{\zeta^D_a\wedge\zeta^D_b} u_\beta(X_t^{u_\beta,S,\delta}) dt \bigg] , 
\quad x\in(a,b),
\end{align}
where $\zeta^D_{\cdot}$ is defined by \eqref{zeta} and the latter equality follows due to $x \in (a,b) \subseteq \mathcal{M} \cup \mathcal{W}$. 
Using eq.~(3.11) in \cite[Chapter~15.3]{karlin1981second}
we find, for $x\in (a,b)$, that 
\begin{align*}
\ol w_{a,b}(x)
= 2 \bigg(\frac{s_\beta(x)-s_\beta(a)}{s_\beta(b)-s_\beta(a)}\int_x^b\frac{s_\beta(b)-s_\beta(y)}{\sigma^2(y)s_\beta'(y)}u_\beta(y)dy+ \left(1-\frac{s_\beta(x)-s_\beta(a)}{s_\beta(b)-s_\beta(a)}\right)
\int_a^x\frac{s_\beta(y)-s_\beta(a)}{\sigma^2(y)s_\beta'(y)}u_\beta(y)dy\bigg).
\end{align*}
Since $s_\beta(\cdot)$ is increasing, we use the fact  that 
$s_\beta(b)-s_\beta(y) \leq s_\beta(b)-s_\beta(a)$  
and $s_\beta(y)-s_\beta(a) \geq s_\beta(x)-s_\beta(a)$ for all $y>x>a$, 
together with $u_\beta(\cdot) \geq 0$, to obtain 
\begin{align*}
\ol{w}_{a,b}(x) 
\leq 2 \bigg(\int_x^b \frac{s_\beta(y)-s_\beta(a)}{\sigma^2(y)s_\beta'(y)} u_\beta(y) dy 
+ \left(1-\frac{s_\beta(x)-s_\beta(a)}{s_\beta(b)-s_\beta(a)}\right) \int_a^x\frac{s_\beta(y)-s_\beta(a)}{\sigma^2(y)s_\beta'(y)} u_\beta(y) dy \bigg).
\end{align*}
It thus follows from \eqref{help_lemma_limit} and the fact that $s_\beta(\cdot)$ is increasing, that
\begin{align} \label{wabeta}
\begin{split}
\ol{w}_{a,\beta}(x) 
:= \lim_{b\uparrow\beta} \ol{w}_{a,b}(x) 
&\leq 2 \bigg(\int_x^\beta \frac{s_\beta(y)-s_\beta(a)}{\sigma^2(y)s_\beta'(y)} u_\beta(y) dy 
+ \int_a^x \frac{s_\beta(y)-s_\beta(a)}{\sigma^2(y)s_\beta'(y)} u_\beta(y) dy \bigg) \\
&= 2 \int_a^\beta \frac{s_\beta(y)-s_\beta(a)}{\sigma^2(y)s_\beta'(y)} u_\beta(y) dy < \infty 
\quad \Rightarrow \quad 
\lim_{x \uparrow \beta} \ol{w}_{a,\beta}(x) < \infty.
\end{split}
\end{align}

\vspace{1mm}
{\it Step 3}. 
Using the strong Markov property, Lemma \ref{scale_properties_lemma} which implies that $\beta$ is entrance-not-exit (i.e. $\zeta^D_\beta=+\infty$ and thus $\zeta^D_a \wedge \zeta^D_\beta = \zeta^D_a$), Assumption \ref{Ass:f}, \eqref{wabeta}, and $w(x;q,u_\beta,\mathcal{S},\delta)<\infty$ for $x\in (l,\beta)$, 
we conclude from \eqref{wrd} that 
\begin{align*}
\limsup_{x\uparrow \beta} w(x;q,u_\beta,\mathcal{S},\delta) 
&\leq \limsup_{x\uparrow \beta} \E_x\bigg[ \int_0^{\zeta^D_a} e^{-qt} ( f + u_\beta)(X^{u_\beta,\mathcal{S},\delta}_t) dt 
+ e^{-r \zeta^D_a} w(a;q,u_\beta,\mathcal{S},\delta) \bigg] \\
&\leq
\frac{f(\beta)}{q}+
\limsup_{x\uparrow \beta} \ol{w}_{a,\beta}(x) + 
w(a;q,u_\beta,\mathcal{S},\delta) 
<\infty.
\end{align*}
Hence, the limit $\lim_{x\uparrow \beta}w(x;q,u_\beta,\mathcal{S},\delta)$ is finite, if it exists. 

\vspace{1mm}
{\it Step 4}.
To prove the existence of the limit as required at the end of Step 3, let $x\in ( a \vee (\beta-\epsilon),\beta)$ and use the strong Markov property, and that $\beta$ is entrance-not-exit (thanks to Lemma \ref{scale_properties_lemma}) to obtain 
\begin{align*}
w(x;q,u_\beta,\mathcal{S},\delta)
= \E_x \bigg[ \int_0^{\tau^D_{\beta-\epsilon}} e^{-qt} (f + u_\beta)(X^{u_\beta,\mathcal{S},\delta}_t) dt \bigg] 
+ w(\beta-\epsilon;q,u_\beta,\mathcal{S},\delta) \E_x\left[ e^{-q \tau^D_{\beta-\epsilon}} \right].
\end{align*}
Using that $u_\beta + f$ is increasing on $(\beta-\epsilon,\beta)$, we can easily verify that the first expectation is increasing in $x$ (e.g.~by comparison principle). Moreover, the second expectation is bounded and decreasing in $x$. We conclude that the limit $\lim_{x\uparrow \beta}w(x;q,u_\beta,\mathcal{S},\delta)$ exists, and by Step 3 it is thus also finite. 

\vspace{1mm}
{\it Step 5}. Using the definition of a generalised threshold control strategy we find 
\begin{align*}
w(x;q,u_\beta,\mathcal{S},\delta)=x-(\beta-\delta) + w(\beta-\delta;q,u_\beta,\mathcal{S},\delta), \enskip x \geq \beta.
\end{align*}
Hence,
\begin{align*}
\lim_{\delta\downarrow 0} w(x;q,u_\beta,\mathcal{S},\delta)=x-\beta + \lim_{\delta\downarrow 0} w(\beta-\delta;q,u_\beta,\mathcal{S},\delta),
\end{align*}
which exists and is finite thanks to Step 4, and completes the proof.
\end{proof}

\end{document}